\documentclass[a4paper,10pt]{amsart}

\usepackage[english]{babel}
\usepackage[utf8]{inputenc}

\usepackage{mathrsfs}
\usepackage{mathtools}
\usepackage{dsfont}
\usepackage{amssymb}
\usepackage{amsmath}
\usepackage{enumerate}
\usepackage{amsfonts}
\usepackage{amsthm,amsmath}
\usepackage{color}
\usepackage{hyperref}

\numberwithin{equation}{section}
\newcommand{\bbC}{\mathds{C}}
\newcommand{\N}{\mathds{N}}
\newcommand{\bbN}{\mathds{N}}

\newcommand{\bbR}{\mathds{R}}

\newcommand{\cP}{\mathscr{P}}

\newcommand{\cL}{\mathscr{L}}
\newcommand{\cU}{\mathscr{U}}
\newcommand{\cT}{\mathscr{T}}
\newcommand{\cS}{\mathscr{S}}

\newcommand{\calL}{\mathscr{L}}

\newcommand{\applied}[2]{\langle #1,#2\rangle}
\DeclarePairedDelimiter\norm{\lVert}{\rVert}
\DeclarePairedDelimiter\abs{\lvert}{\rvert}
\newcommand{\argument}{\,\cdot\,}
\renewcommand{\phi}{\varphi}

\newcommand{\dx}{\;\mathrm{d}}
\newcommand{\eps}{\varepsilon}

\DeclareMathOperator{\id}{id}

\definecolor{mygreen}{rgb}{0.1,0.75,0.2}

\theoremstyle{definition}
\newtheorem{definition}{Definition}[section]

\newtheorem{remarks}[definition]{Remarks}
\newtheorem{example}[definition]{Example}

\theoremstyle{plain}
\newtheorem{proposition}[definition]{Proposition}
\newtheorem{lemma}[definition]{Lemma}
\newtheorem{theorem}[definition]{Theorem}
\newtheorem{corollary}[definition]{Corollary}

\begin{document}

\title[Lower Bounds and Asymptotics of Semigroups]{Lower Bounds and the Asymptotic Behaviour of Positive Operator Semigroups}
\author{Moritz Gerlach}
\email{moritz.gerlach@uni-potsdam.de}
\address{Moritz Gerlach\\Universit\"at Potsdam\\Institut f\"ur Mathematik\\Karl-Liebknecht-Stra{\ss}e 24--25\\14476 Potsdam\\Germany}
\author{Jochen Gl\"uck}
\email{jochen.glueck@uni-ulm.de}
\address{Jochen Gl\"uck, Institute of Applied Analysis, Ulm University, Germany}
\date{\today}
\begin{abstract}
	If $(T_t)$ is a semigroup of Markov operators on an $L^1$-space that admits a non-trivial lower bound, 
	then a well-known theorem of Lasota and Yorke asserts that the semigroup is strongly convergent as $t \to \infty$. 
	In this article we generalise and improve this result in several respects. 

	First, we give a new and very simple proof for the fact that the same conclusion also holds if the semigroup is merely assumed to be bounded instead of Markov.
	As a main result we then prove a version of this theorem for semigroups which only admit certain individual lower bounds. 
	Moreover, we generalise a theorem of Ding on semigroups of Frobenius-Perron operators.
	We also demonstrate how our results can be adapted to the setting of general Banach lattices and we give some counterexamples to show optimality of our results.
	
	Our methods combine some rather concrete estimates and approximation arguments with abstract functional analytical tools. 
	One of these tools is a theorem which relates the convergence of a time-continuous operator semigroup to the convergence of embedded discrete semigroups.
\end{abstract}
\subjclass[2010]{Primary: 47D06, Secondary: 47D07, 47B65}
\keywords{positive semigroup; Markov semigroup; lower bounds; long-term behaviour; convergence}
\maketitle

\section{Introduction and Preliminaries} \label{section:introduction-and-preliminaries}

This article is about the long-term behaviour of semigroups of operators. 
By a \emph{semigroup} $\cT$ we mean a family of bounded linear operators $(T_t)_{t \in J}$ on a Banach space $E$ 
where the index set $J$ is either $\bbN$ or $(0,\infty)$ and where the semigroup law $T_{t+s} = T_t T_s$ is fulfilled for all $s,t \in J$.
Operator semigroups describe the evolution over time of linear autonomous systems and thus occur in various contexts.
Of particular importance in applications are positive semigroups;
if $E$ is a Banach lattice, then a semigroup $\cT = (T_t)_{t \in J}$ is called \emph{positive} if each $T_t$ is a positive operator,
i.e.\ if $T_tf \ge 0$ for all $0 \le f \in E$ and all $t \in J$. 

There are many different methods to study the long-term behaviour of such semigroups. For example, the asymptotics of positive $C_0$-semigroups can be studied effectively by means of spectral theory
since the generators of positive semigroups exhibit very special spectral properties; we refer to the classical monograph \cite{nagel1986} for an overview over this field and to a series 
of papers \cite{davies2005, keicher2006, arendt2008, wolff2008, gerlach2013b} for several recent results. 
We also point out that some of these methods can be extended to study $C_0$-semigroups which fulfil weaker or different geometric properties than positivity, see e.g.~\cite{daners2016} and \cite{gluck2016}.

Spectral theory also comes in handy in the study of time-discrete positive semigroups since there is a well-developed spectral theory for positive operators on Banach lattices. 
For an overview of important parts of this so-called \emph{Perron-Frobenius theory} we refer to the survey article~\cite{grobler1995}; some very recent results can for example be found in \cite{gluck2015}.

On the other hand, there are also methods to analyse the asymptotic behaviour of positive semigroups which do not employ spectral theory. Many of those were first developed on $L^1$-spaces and 
have then been extended to more general Banach lattices. We refer to the monograph \cite{emelyanov2007} and the survey article \cite{emelyanov2006} for an excellent overview of such results. 

Of course, $L^1$-spaces have always been of particular importance in applications for several reasons. 
For instance, the theory of Markov chains and Markov processes naturally leads to so-called Markov semigroups on $L^1$-spaces, i.e.\ to positive semigroups which operate isometrically on the positive cone. 
Rather simple convergence criteria for such Markov semigroups are available in the finite dimensional case (see e.g.~\cite[Thm~VI.1.1]{doob1953} and \cite[Thm~4.2]{seneta1981}) while such criteria are more 
delicate in the infinite dimensional case, and in particular on non-discrete state spaces (see e.g.~\cite[Sec~4.2]{daprato1996} and \cite[Thm~4.4]{gerlach2015} for different versions 
and proofs of a classical theorem of Doob which addresses this issue; see also \cite[Thm~3.6]{gerlach2014} for a related Tauberian theorem). 

Positive semigroups on $L^1$-spaces also occur in ergodic theory. Here one studies the asymptotic behaviour of semigroups of so-called \emph{Frobenius-Perron operators} 
which are associated to dynamical systems on measure spaces, see e.g.\ \cite{lasota1982, ding2003} and the monograph \cite{lasota1994}; see also \cite{nagel1991} and the recent monograph \cite{eisner2015} for some related topics.
Other applications of positive semigroups on $L^1$-space include models from mathematical biology (see \cite[Sec~5]{pichor2012} for some examples), 
the Boltzmann equation (see e.g.~\cite[Sec~8--9]{lasota1983}) and transport equations (see e.g.~\cite{pichor2000}).

This wide range of applications has motivated the development of many sufficient conditions for strong convergence of positive semigroups on $L^1$-spaces. 
The survey papers \cite{rudnicki2002, bartoszek2008, pichor2012} give an excellent overview of many such theorems. We also refer to the somewhat older paper \cite{komornik1993} which contains a wealth of interesting results.
Yet, even on these spaces the theory still seems to be far from being complete.

\subsection*{Contribution of this article}

One classical method to obtain semigroup convergence on $L^1$-spaces is the so-called \emph{lower bound} technique. It was developed by Lasota and Yorke (\cite{lasota1982}; 
see also \cite{lasota1983}) who proved that a semigroup of Markov operators converges automatically to a rank-1 projection if it admits a so-called non-trivial lower bound; 
see Section~\ref{section:lasota-yorke} for details. Later on, more general convergence results were discovered which yield this result as a corollary; see again Section~\ref{section:lasota-yorke}, 
in particular the comments before Corollary~\ref{cor:lasota-yorke-for-bounded-semigroups}, for a detailed discussion and several references. 
Yet, it appears that there is still some unexplored potential in classical approaches based on the existence of lower bounds
and we are going to prove several results which, to the best of our knowledge, 
cannot simply be derived from known convergence theorems. One of our main results asserts that in the Lasota-Yorke theorem it suffices to assume the existence of an
\emph{individual lower bound} for each orbit. The limit operator, however, does then no longer need to be of rank $1$. The complete statement reads as follows.

\begin{theorem} \label{thm:main-result-ind-lower-bounds}
	Let $(\Omega,\mu)$ be an arbitrary measure space and let $\cT = (T_t)_{t \in J}$ be a bounded positive semigroup on $L^1 \coloneqq L^1(\Omega,\mu)$ where either $J = \bbN$ or $J = (0,\infty)$. Suppose that for every 
	normalised function $0 \le f \in L^1$ there exists a function $0 \le h_f \in L^1$ such that $\norm{(T_tf - h_f)^-} \to 0$ as $t \to \infty$ 
	and such that $\inf_f \norm{h_f} > 0$. 
	
	Then $(T_t)$ converges strongly as $t \to \infty$.
\end{theorem}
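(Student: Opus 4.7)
The plan is to first reduce the continuous-time case to discrete time by invoking the abstract result, promised in the abstract, that relates convergence of a time-continuous semigroup to convergence of embedded discrete semigroups; this lets me work with $T := T_1$ and the sole task becomes showing that $T^n g$ converges in norm for every $g \in L^1$. Since $\{g \in L^1 : (T^n g) \text{ converges}\}$ is a closed linear subspace and since the positive normalised functions span a dense subspace, it suffices to treat a fixed normalised $0 \le f \in L^1$.

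With $\delta := \inf_g \norm{h_g} > 0$, the hypothesis $\norm{(T^n f - h_f)^-} \to 0$ together with $\norm{h_f} \ge \delta$ immediately rules out weak convergence of $T^n f$ to $0$: if $T^n f \to 0$ weakly then $T^n f \wedge h_f \to 0$ weakly, hence $(T^n f - h_f)^- = h_f - T^n f \wedge h_f \to h_f$ weakly, contradicting its norm-convergence to $0$. The key step is to upgrade this ``persistence'' to relative weak compactness of the orbit $\{T^n f : n \in \bbN\}$ in $L^1$. By the Dunford--Pettis theorem this amounts to uniform integrability of the orbit plus absence of escape to infinity, and the individual lower bound should provide exactly this: the asymptotic inequality $T^n f \gtrsim h_f$ forces at least $\delta$ of the mass of every late iterate to remain concentrated on the $\sigma$-finite support of $h_f$, and by iterating the hypothesis along the orbit (applying it to each normalised $T^n f / \norm{T^n f}$ and to the resulting $h$'s) I would build a nested family of lower bounds preventing any leakage of mass.

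Once weak relative compactness is in hand, I would invoke the Jacobs--de Leeuw--Glicksberg decomposition $f = f_r + f_s$, where $T^n f_s \to 0$ weakly and $T$ restricted to the reversible subspace is an invertible isometry whose peripheral spectrum is discrete on the unit circle. Positivity combined with the persistent lower bound $T^n f \gtrsim h_f$ should exclude unimodular eigenvalues other than $1$, since any oscillation produced by a root of unity would be incompatible with the asymptotic domination by a fixed positive function. Thus $T$ acts as the identity on the reversible part, so $T^n f_r$ is already constant in $n$. To promote weak convergence on $f_r$ to norm convergence and, more importantly, to show that $f_s$ cannot contribute anything asymptotically, I would apply Corollary~\ref{cor:lasota-yorke-for-bounded-semigroups} (the uniform-lower-bound version) to the restrictions of $\cT$ to the closed ideals generated by finite suprema of the various $h_g$.

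The principal obstacle is the second step, namely producing weak relative compactness of the orbit from purely \emph{individual} lower bounds: the $h_f$ vary with $f$ and are not \emph{a priori} dominated by any common integrable majorant, so no standard tightness criterion applies directly. A careful propagation of the lower bound along the orbit, together with a Doeblin-type estimate of the form underlying the uniform case, should supply the required uniformity; I expect the concrete estimates alluded to in the abstract to enter precisely here.
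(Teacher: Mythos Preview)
Your reduction to discrete time and to normalised positive $f$ is fine, but the core of your plan has a genuine gap. You want to establish relative weak compactness of $\{T^n f : n\in\bbN\}$ from the individual lower bound hypothesis and then run a Jacobs--de~Leeuw--Glicksberg argument. You correctly identify this compactness step as the ``principal obstacle'', but your sketch does not close it: the $h_g$ vary with $g$, there is no common majorant, and ``propagating the lower bound along the orbit'' does not by itself produce uniform integrability. In fact Example~\ref{ex:counterexample-to-dings-question} exhibits a Markov operator with non-zero individual lower bounds for which \emph{no} subsequence of any orbit is weakly convergent; the only extra datum you have is $\inf_f\norm{h_f}>0$, and you have not shown how that single norm bound yields tightness. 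Even granting weak compactness, there is a second gap: JdLG gives $T^n f_s\to 0$ only weakly, and on $L^1$ this does not upgrade to norm convergence, so you would still need to show $\norm{T^n f_s}\to 0$; invoking Corollary~\ref{cor:lasota-yorke-for-bounded-semigroups} on ideals generated by the $h_g$ does not help because there is no \emph{uniform} lower bound on any such ideal.

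The paper avoids compactness entirely. After the same discrete reduction, it renorms $E$ via a Banach limit, $\norm{f}_1\coloneqq\langle L,(\norm{T^n|f|})_{n}\rangle$, so that $(E,\norm{\argument}_1)$ is again an AL-space on which $T$ is Markov (this is where $\inf_f\norm{h_f}>0$ is first used, to show $\norm{\argument}_1$ is equivalent to $\norm{\argument}$). In the Markov setting one shows directly that for each normalised $f$ the maximal lower bound $h_{f,\max}$ for $(\cT,f)$ (which exists and is a fixed point by the KB-property) has norm exactly $1$: assuming $\norm{h_{f,\max}}<1$, one iteratively applies the hypothesis to the normalised remainder $g_n/\norm{g_n}$ with $g_n=(T_{t_0}f-h_n)^+$, obtaining fixed lower bounds $h_{n+1}=h_n+a_n$ whose norms satisfy $\norm{h_{n+1}}\ge 1-\delta(1-\beta)^{n+1}$ where $\beta=\inf_f\norm{h_f}$. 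A further diagonal limiting argument then produces a lower bound of norm~$1$ in $H_f$, contradicting maximality. Once $\norm{h_{f,\max}}=1$, Lemma~\ref{lem:asymptotic-domination-of-normalised-vectors-implies-convergence} immediately gives $T_tf\to h_{f,\max}$ from the AL-identity $\norm{(T_tf-h)^+}+\norm{h}=\norm{T_tf}+\norm{(T_tf-h)^-}$. No spectral theory, no weak compactness, and no JdLG enter.
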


This theorem follows from Corollary~\ref{cor:ind-lower-bounds-with-norm-bounds-for-bounded-sg} below. It is related to a paper of Ding \cite{ding2003} who 
considered the individual lower bounds $h_f$ in the above theorem under somewhat different assumptions; see Sections~\ref{section:individual-lower-bounds} and~\ref{section:individual-lower-bounds-without-a-norm-bound} for a detailed discussion. 
Note that the limit operator in Theorem~\ref{thm:main-result-ind-lower-bounds} can have infinite dimensional range and that, e.g.,\ the semigroup which consists only of the 
identity operator fulfils the assumptions of the theorem. This shows that Theorem \ref{thm:main-result-ind-lower-bounds} 
cannot be a special case of any result which contains some kind of compactness condition.

In order to give a complete summary of all our results, let us briefly outline the content of the paper:
In Section~\ref{section:three-abstract-convergence-theorems} we prove several abstract theorems about the convergence of semigroups on Banach spaces.
We show that, under appropriate boundedness assumptions, a semigroup $\cT = (T_t)_{t \in (0,\infty)}$ on a Banach space is convergent if 
all the embedded discrete semigroups $(T_{tn})_{n \in \bbN}$ are convergent (Theorem~\ref{thm:discrete-to-continuous}).
This result is implicitly contained in some proofs in the literature but we could not find an explicit statement of it.
In case that the limit operator has rank-$1$ the result can be further improved (Theorem~\ref{thm:discrete-to-continuous-rank-1}).
Moreover, we demonstrate how operator norm convergence of a semigroup can be analysed by considering strong convergence of a lifting of 
the semigroup to an ultra power (Theorem~\ref{thm:strong-ultra-convergence-implies-operator-norm-convergence}).

In Section~\ref{section:lasota-yorke} we revisit the classical Lasota-Yorke theorem about convergence of Markov semigroups which admit a non-zero lower bound. 
It is known that this result also holds for positive semigroups that are merely bounded and we give a new and very simple proof for this fact 
(see Corollary~\ref{cor:lasota-yorke-for-bounded-semigroups}). 
Then we discuss different ways to obtain a version of the Lasota-Yorke theorem for convergence in operator norm (see Corollary~\ref{cor:lasota-yorke-for-bounded-semigroups-and-convergence-in-operator-norm} 
and the subsequent discussion) and finally, we show that lower bounds in the sense of Lasota and Yorke cannot exist on any Banach lattices but on AL-spaces (Theorem~\ref{thm:lower-bounds-only-on-al-spaces}).

In Section~\ref{section:individual-lower-bounds} we prove a (slightly sharpened version of) the already stated Theorem~\ref{thm:main-result-ind-lower-bounds} (Corollary~\ref{cor:ind-lower-bounds-with-norm-bounds-for-bounded-sg})
and from this we derive a convergence result about dominating semigroups (Corollary~\ref{cor:domination}).

In Section~\ref{section:individual-lower-bounds-without-a-norm-bound} we show that Theorem~\ref{thm:main-result-ind-lower-bounds} also holds 
without the assumption $\inf_{f} \norm{h_f} > 0$ if the adjoint of each operator $T_t$ is a lattice homomorphism (Theorem~\ref{thm:ding-abstract-and-for-bounded-sg}); this generalises a result of Ding (see the discussion after Theorem~\ref{thm:ding-abstract-and-for-bounded-sg}). Moreover, we briefly consider lower bounds for semigroups of lattice homomorphisms (Theorem~\ref{thm:ind-lower-bounds-for-sg-of-lattice-homomorphisms}) and we prove a negative result about operator norm convergence of semigroups of Frobenius-Perron operators (Corollary~\ref{cor:norm-convergence-frobenius-perron-semigroups}).

In the final Section~\ref{section:lower-bounds-general-banach-lattices} we show several Lasota-Yorke type results on more general Banach lattices, where the notion of lower bounds has to be adjusted appropriately (see Theorems~\ref{thm:ind-lower-bound-wrt-psi} and~\ref{thm:unif-lower-bound-wrt-psi} as well as Propositions~\ref{prop:no-fixed-point-but-compact-orbits} and~\ref{prop:no-fixed-point-but-compact-orbits-without-os-norm}).

\subsection*{Preliminaries}

Throughout this paper, all Banach spaces are real unless stated otherwise. In all proofs we assume tacitly that the underlying Banach space $E$ is non-zero; it is, however, easy to see that our results themselves are also valid if $E = \{0\}$. If $E$ is a real or complex Banach space, then the space of 
all bounded linear operators on $E$ is denoted by $\calL(E)$. The \emph{dual space} of $E$ is denoted by $E'$ and for every $T \in \calL(E)$ we denote by $T' \in \calL(E')$ the \emph{adjoint} of $T$. For every $f \in E$ and every $\varphi \in E'$ the operator $\varphi \otimes f \in \calL(E)$ is defined by $(\varphi \otimes f)g = \langle \varphi, g \rangle f$ for all $g \in E$.
The identity operator on a Banach space $E$ will be denoted by $\id_E$.

We assume the reader to be familiar with the basic 
theory of Banach lattices; standard references for this topic are e.g.\ \cite{schaefer1974} and \cite{meyer1991}. 
Since there exist some different conventions concerning the notation, we summarise the most import notions in the following.
Let $E$ be a Banach lattice. Then $E_+ \coloneqq \{f \in E: f \ge 0\}$ denotes the \emph{positive cone} in $E$; a vector $f \in E$ is called \emph{positive} if $f \ge 0$. 
For two vectors $f,g \in E$ we write $f < g$ to indicate that $f \le g$ but $f \not= g$; in particular, $f > 0$ means that $f \ge 0$ but $f \not= 0$.
This convention is very common in Banach lattice theory but might be somewhat uncommon for people who work mostly on function spaces.
A vector $f \in E_+$ is called a \emph{quasi-interior point of $E_+$} if the so-called \emph{principal ideal} $E_f \coloneqq \bigcup_{c > 0} \{g \in E: \abs{g} \le c f\}$ is dense in $E$.
An operator $T \in \calL(E)$ is called \emph{positive} if $TE_+ \subseteq E_+$ and we denote this by $T \ge 0$. The dual space $E'$ of a Banach latticec $E$ is again a Banach lattice; for every functional $\varphi \in E'$ we have $\varphi \ge 0$ if and only if $\langle \varphi, f\rangle \ge 0$ for all $f \in E_+$. A functional $\varphi \in E'$ is called \emph{strictly positive} if $\langle \varphi, f \rangle > 0$ for all $f > 0$.

A Banach lattice $E$ is called an AL-space if $\norm{f+g} = \norm{f} + \norm{g}$ for all $f,g \in E_+$.
Every $L^1$-space is an AL-space and conversely every AL-space is isometrically Banach lattice isomorphic to $L^1(\Omega,\mu)$ 
for some (not necessarily $\sigma$-finite) measure space $(\Omega,\mu)$. 
A linear operator $T \in \calL(E)$ on an AL-space $E$ is called a \emph{Markov operator} 
if $T \ge 0$ and $\norm{Tf} = \norm{f}$ for all $f \in E_+$. On every AL-space $E$ there is a uniquely determined functional $\mathds{1} \in E'$ with the 
property $\langle \mathds{1}, f \rangle = \norm{f}$ for all $f \in E_+$; this functional is called the \emph{norm functional} on $E$. Clearly, a positive operator $T \in \calL(E)$ 
is a Markov operator if and only if $T'\mathds{1} = \mathds{1}$.

A Banach lattice $E$ is called a \emph{KB-space} if every norm bounded increasing sequence in $E$ is norm convergent. This is equivalent to the seemingly stronger condition that every norm bounded increasing net in $E$ 
is norm convergent (which easily follows from the fact that every KB-space is a band in its bi-dual and has order continuous norm;
see Theorem~2.4.12 and the paragraph after Definition~2.4.11 in \cite{meyer1991}). We shall frequently use the fact that every AL-space is a KB-space \cite[Cor~2.4.13]{meyer1991}.

For the sake of completeness we recall from the very beginning that a \emph{semigroup} on a Banach space $E$ is always understood to be a family $\cT = (T_t)_{t \in J} \subseteq \calL(E)$ 
where either $J = \bbN \coloneqq \{1,2,\cdots\}$ or $J = (0,\infty)$ and where the semigroup law $T_{t+s} = T_t T_s$ is fulfilled for all $s,t \in J$.
It is worth emphasising that we do not require any continuity of the mapping $t \mapsto T_t$ in case that $J = (0,\infty)$.
A vector $f \in E$ is called a \emph{fixed vector} (or a \emph{fixed point}) of $\cT$ of $T_tf = f$ for all $t \in J$ and a functional $\varphi \in E'$ is called a \emph{fixed functional} of $\cT$ if $T_t'\varphi = \varphi$ for all $t \in J$.
If $E$ is a Banach lattice, then the semigroup $\cT$ is called \emph{positive} if $T_t \ge 0$ for all $t \in J$. 
If $\cT$ is a positive semigroup on a Banach lattice $E$, then a vector $f \in E_+$ is called a \emph{super fixed vector} (or a \emph{super fixed point}) of $\cT$ if $T_tf \ge f$ for all $t \in J$. 
If $E$ is an AL-space, then $\cT$ is called a \emph{Markov semigroup} if $T_t$ is a Markov operator for every $t \in J$.

Let $\cT = (T_t)_{t \in J}$ be a semigroup on a Banach space $E$. Then  $\cT$ is called \emph{bounded} 
if $\sup_{t \in J} \norm{T_t} < \infty$; it is called \emph{locally bounded at $0$} if for one (equivalently all) $c \in J$ we have $\sup_{t \in (0,c] \cap J} \norm{T_t} < \infty$.
Note that if $J = \bbN$, then $\cT$ is automatically locally bounded at $0$. 

We will very often be concerned with the question whether a semigroup $\cT = (T_t)_{t \in J}$ on a Banach space $E$ is \emph{convergent} with 
respect to the strong operator topology or, on a few occasions, with respect to the operator norm. Here we always mean convergence as $t \to \infty$.

On some occasions in the article we make freely use of ultra powers of Banach spaces. Here we only recall the basics in order to fix the notation: 
let $E$ be a real or complex Banach space and let $\cU$ be a free ultra filter on $\bbN$. Let $c_{0,\cU}(\bbN;E) \subseteq \ell^\infty(\bbN;E)$ be given by
\begin{align*}
	c_{0,\cU}(\bbN;E) \coloneqq \{(f_n) \in \ell^\infty(\bbN;E): \lim_{\cU} f_n = 0\}.
\end{align*}
Then the space $E^\cU \coloneqq \ell^\infty(\bbN;E) / c_{0,\cU}(\bbN;E)$ is called the \emph{ultra power} of $E$ with respect to $\cU$. 
For every $f = (f_n) \in \ell^\infty(\bbN;E)$ we denote by $f^\cU = (f_n)^\cU \coloneqq f + c_{0,\cU}(\bbN;E)$ the equivalence class of $f$ in $E^\cU$; 
note that $\norm{f^\cU} = \lim_{\cU} \norm{f_n}$. If $f \in E$, then we denote by $f^\cU$ the equivalence class of the constant sequence $(f)_{n \in \bbN}$ in $E^\cU$. 
The mapping $E \to E^\cU$, $f \mapsto f^\cU$ is an isometric embedding of the Banach space $E$ into the Banach space $E^\cU$ and is called the \emph{canonical embedding} of $E$ into $E^\cU$. 
If $E$ is a Banach lattice, then so is $E^\cU$ and the canonical embedding is a lattice homomorphism in this case. 

Every operator $T \in \calL(E)$ has a lifting $T^\cU \in \calL(E^\cU)$ which is given by $T^\cU f^\cU \coloneqq (Tf_n)^\cU$ 
for every $f = (f_n) \in \ell^\infty(\bbN;E)$ and the mapping $\calL(E) \to \calL(E^\cU)$, $T \mapsto T^\cU$ is an isometric Banach algebra homomorphism.

\section{Three Abstract Convergence Theorems} \label{section:three-abstract-convergence-theorems}

In this section we prove three theorems on semigroup convergence which hold in the general setting of Banach spaces and which are abstract 
in the sense that they do not provide explicit criteria for the convergence of a semigroup. 
Instead, they clarify the connection between different types of convergence. 
These theorems, in particular the first one, will be very useful later on.

Our first theorem relates the convergence of a semigroup $\cT = (T_t)_{t \in (0,\infty)}$ to the convergence of the powers $(T_t)^n = T_{tn}$ as $n \to \infty$.

\begin{theorem} \label{thm:discrete-to-continuous}
	Let $E$ be a real or complex Banach space and let $\cT = (T_t)_{t \in (0,\infty)}$ be a semigroup on $E$ which is locally bounded at $0$.
	\begin{enumerate}[(a)]
		\item The semigroup $(T_t)_{t \in (0,\infty)}$ converges strongly if and only if for every $t \in (0,\infty)$ the semigroup $(T_{tn})_{n\in \bbN}$ converges strongly.
		\item The semigroup $(T_t)_{t \in (0,\infty)}$ converges with respect to the operator norm if and only if for every $t > 0$ the semigroup $(T_{tn})_{n \in \bbN}$ converges with respect to the operator norm.
	\end{enumerate}
	If the equivalent assertions in (a) or (b) hold, then all occurring limits coincide.
\end{theorem}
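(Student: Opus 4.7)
The ``only if'' direction in both parts is immediate: if $T_s$ converges as $s \to \infty$, the same limit is reached along any sequence of times tending to infinity, in particular along $s = tn$ for each fixed $t > 0$. For the converse I would assume $T_{tn} \to P_t$ for every $t > 0$ and first upgrade local boundedness at $0$ to uniform boundedness of $\cT$: if $M := \sup_{r \in (0,c]} \|T_r\| < \infty$, then $(T_{cn})_n$ is operator-norm bounded (being convergent), and writing any $s > c$ as $s = cn + r$ with $r \in (0, c]$ gives $\|T_s\| = \|T_r T_{cn}\| \le M \sup_n \|T_{cn}\|$. Call the resulting uniform bound $K$.

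Next I would read off three properties of the operators $P_t$. Shifting the index $n \mapsto n+1$ in $T_{tn}$ yields $T_t P_t = P_t$, hence $T_{tn} P_t = P_t$ for every $n$, and passing to the limit once more gives $P_t^2 = P_t$; thus $P_t$ is a projection onto $\operatorname{Fix} T_t$. Applying the same shift to an arbitrary $T_s$ shows that $T_s P_t = P_t T_s$. Finally, the decomposition
\[
T_{tn} T_{sn} f - P_t P_s f = T_{tn}(T_{sn} f - P_s f) + (T_{tn} P_s f - P_t P_s f)
\]
combined with the uniform bound $K$ on $\|T_{tn}\|$ yields the composition law $P_{t+s} = P_t P_s$ for all $s, t > 0$.

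The crucial step is to show that all $P_t$ coincide. Given $s, t > 0$, pick $k \in \bbN$ with $kt > s$ and use $P_{kt} = P_t^k = P_t$ (from the composition law) together with $P_{kt} = P_s P_{kt-s}$ to conclude $\operatorname{range} P_t = \operatorname{range} P_{kt} \subseteq \operatorname{range} P_s$; swapping $s$ and $t$ yields the reverse inclusion. Thus $P_s$ and $P_t$ are commuting projections with the same range, hence equal. Denote the common operator by $P$. Then $\operatorname{range} P \subseteq \operatorname{Fix} T_s$ for every $s > 0$, which is equivalent to $T_s P = P$, so $T_s - P = T_s(I - P)$.

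To conclude (a), take any $h \in E$ and decompose $h = Ph + g$ with $g = (I - P)h$. Since $Pg = 0$ the hypothesis at $t = 1$ gives $T_n g \to 0$; for any $\delta > 0$, choose $N$ with $\|T_N g\| < \delta$, and then $\|T_s g\| = \|T_{s-N} T_N g\| \le K\delta$ for every $s > N$, so $T_s h \to Ph$ strongly. Part (b) follows by the same telescoping applied in operator norm, using $\|T_s - P\| = \|T_s(I - P)\| \le K \|T_N(I - P)\| = K \|T_N - P\|$ for $s > N$. The main obstacle is the equality $P_s = P_t$; once the composition law is available this falls out quickly, but the composition law itself relies on the uniform bound from the first step to justify passing to the limit in the product $T_{(t+s)n} = T_{tn} T_{sn}$, so the boundedness step is the true load-bearing ingredient.
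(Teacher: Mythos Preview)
Your proof is correct and follows the same architecture as the paper: upgrade local boundedness to a uniform bound, show that the limits $P_t$ form a semigroup of projections, prove this semigroup is constant, and then conclude via $\|T_s f - Pf\| \le K\|T_N f - Pf\|$ for $s > N$. The only minor variation is in the constancy step, where the paper obtains $P_t = P_{t+\tau}$ directly from the identity $P_t = P_{t-\tau/n}\,P_{\tau/n} = P_{t-\tau/n}\,P_{(n+1)\tau/n} = P_{t+\tau}$ (their Lemma~\ref{lem:projection-semigroup}) rather than through your range-equality argument for commuting projections.
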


The method that we use to prove this theorem is essentially known from \cite[the proof of Thm~4]{lotz1986}. Yet, we could not find the theorem in its full generality in the literature. We need the following simple lemma whose proof is taken from \cite[the proof of Theorem~VI.1.1]{doob1953}.

\begin{lemma} \label{lem:projection-semigroup}
	Let $E$ be a real or complex Banach space and let $\cP = (P_t)_{t \in (0,\infty)}$ be a semigroup on $E$ such that $P_t$ is a projection for every $t \in (0,\infty)$. 
	Then $\cP$ is constant, i.e.\ $P_t = P_s$ all $t,s \in (0,\infty)$.
\end{lemma}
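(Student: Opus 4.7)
The plan is to exploit the interaction between the projection identity $P_t^2 = P_t$ and the semigroup law $P_{s+t} = P_s P_t$ to establish a local-constancy property for the map $t \mapsto P_t$, and then to chain these local pieces together over an arbitrary pair $s,t$. No continuity or density argument will be needed, which matches the fact that no continuity of $t \mapsto P_t$ is assumed.

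As a warm-up, I note that $P_t = P_t^n = P_{nt}$ for every $n \in \bbN$ (projection identity combined with the semigroup law), and consequently also $P_{t/n} = P_t$ after substituting $t \mapsto t/n$. This already gives constancy of $P$ along positive rational multiples of any fixed $t$, but by itself this is not enough, so I will not use it in the main argument.

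The crucial step is the following short computation: for arbitrary $\eps \in (0,t)$, I plan to show
\[
P_{t+\eps} = P_t P_\eps = (P_{t-\eps} P_\eps) P_\eps = P_{t-\eps} P_\eps^2 = P_{t-\eps} P_\eps = P_t,
\]
using in turn the semigroup law, the decomposition $P_t = P_{(t-\eps)+\eps} = P_{t-\eps}P_\eps$, the projection identity $P_\eps^2 = P_\eps$, and the semigroup law once more. This identifies $P_{t'} = P_t$ for every $t' \in (t,2t)$, which is the desired local constancy.

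To finish, given arbitrary $0 < s < t$, I will chain this geometrically: choose $n \in \bbN$ with $r \coloneqq (t/s)^{1/n} < 2$ and set $u_i \coloneqq s\, r^i$ for $i = 0,\dots,n$, so that $u_0 = s$, $u_n = t$, and $u_i < u_{i+1} < 2u_i$ for each $i$. The previous step then yields $P_{u_i} = P_{u_{i+1}}$, so that $P_s = P_{u_0} = P_{u_1} = \cdots = P_{u_n} = P_t$, as required. The only really delicate point in the whole argument is the four-line calculation above; once this local identity is in hand, the remaining chaining step is essentially combinatorial and trivializes the problem.
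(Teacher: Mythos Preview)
Your proof is correct, and the core trick is the same as the paper's: decompose $P_t = P_{t-s}P_s$ via the semigroup law and then use the projection identity $P_s = P_s^k = P_{ks}$ to slide the argument. The difference is that you use only $k=2$, which yields the local statement $P_{t+\eps}=P_t$ for $\eps\in(0,t)$ and then forces you to do the geometric-chain step. The paper instead takes a larger power: given $t,\tau>0$ it chooses $n$ with $\tau/n<t$ and writes
\[
P_t \;=\; P_{t-\tau/n}\,P_{\tau/n} \;=\; P_{t-\tau/n}\,P_{(n+1)\tau/n} \;=\; P_{t+\tau},
\]
reaching the full conclusion in one line with no chaining needed. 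So your second step is not wrong, just superfluous --- had you used $P_\eps^{m}$ for suitably large $m$ instead of $P_\eps^2$, the chaining would disappear.
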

\begin{proof}
	Let $\tau,t \in (0,\infty)$ and choose $n \in \bbN$ such that $\frac{\tau}{n} < t$. Then we have
	\begin{align*}
		P_t = P_{t - \frac{\tau}{n}} P_{\frac{\tau}{n}} = P_{t - \frac{\tau}{n}} P_{(n+1) \frac{\tau}{n}} = P_{t+\tau}.
	\end{align*}
	This proves the lemma.
\end{proof}

\begin{proof}[Proof of Theorem~\ref{thm:discrete-to-continuous}]
	We only show assertion (a) because the proof of (b) is virtually the same. Throughout, we may assume that the scalar field is complex, 
	since otherwise  we replace $E$ with a Banach space complexification of $E$. 
	The implication ``$\Rightarrow$'' is obvious. To prove the converse implication assume that for every $t \in (0,\infty)$ the semigroup $(T_{tn})_{n \in \bbN}$ is strongly convergent.
	
	We first show that the semigroup $\cT$ is in fact bounded. 
	Since $\cT$ is locally bounded at $0$, we have $C \coloneqq \sup_{t \in (0,1]} \norm{T_t} < \infty$. 
	Moreover it follows from the Uniform Boundedness Principle that $D \coloneqq \sup_{n \in \bbN} \norm{T_{n}} < \infty$ since $(T_n)$ converges strongly as $n \to \infty$. 
	Hence we have $\norm{T_t} \le CD$ for every $t \in [0,\infty)$.

	Now we show that the strong limits $P_t \coloneqq \lim_{n \to \infty} T_{tn}$ coincide for all $t > 0$. 
	Clearly, each operator $P_t$ is a projection. Moreover we have
	\begin{align*}
		P_sP_t = \lim_{n \to \infty} T_{sn} \lim_{n \to \infty} T_{tn} = \lim_{n \to \infty} T_{sn}T_{tn} = \lim_{n \to \infty} T_{(s+t)n} = P_{s+t},
	\end{align*}
	for all $s,t \in (0,\infty)$. Here we used that on bounded subsets of $\calL(E)$ the operator multiplication is jointly continuous with respect to the strong operator topology. 
	Hence, $(P_t)_{t \in (0,\infty)}$ is a semigroup. 
	We define $P \coloneqq P_t$ for some $t > 0$ and 
	we conclude from Lemma~\ref{lem:projection-semigroup} that $P = P_t$ for \emph{every} $t>0$. 
	Hence, we proved that $\lim_{n \to \infty} T_{tn} = P$ for all $t > 0$ and this implies that $T_tP = \lim_{n \to \infty} T_t T_{tn} = P$ for every $t > 0$. 

	Now, let $M \coloneqq \sup_{t \in (0,\infty)} \norm{T_t} < \infty$, $x \in E$ and $\varepsilon > 0$. 
	Observe that we have $\norm{T_{n_0}x - Px} \le \eps$ for some sufficiently large $n_0 \in \bbN$. 
	For all $t > n_0$ we thus obtain
	\begin{align*}
		\norm{T_tx - Px} = \norm{T_{t-n_0}T_{n_0}x - T_{t-n_0}Px} \le M \norm{T_{n_0}x - Px} \le M \eps.
	\end{align*}
	This proves that $T_tx \to Px$ as $t \to \infty$.
\end{proof}

If the scalar field is real and the limit operators under consideration are of rank one,
then Theorem~\ref{thm:discrete-to-continuous} can be slightly improved. In this case, one only needs to show that at least \emph{one} of the embedded discrete semigroups $(T_{nt})_{n \in \bbN}$ 
converges in order to obtain convergence of the entire semigroup. This is the content of our second abstract convergence theorem. 
It was inspired by \cite[Proposition~3.1]{malczak1992} where part (a) of this result was proved for the special case of a Markov semigroup on an AL-space.

\begin{theorem} \label{thm:discrete-to-continuous-rank-1}
	Let $E$ be a real Banach space and let $\cT = (T_t)_{t \in (0,\infty)}$ be a semigroup on $E$ which is locally bounded at $0$. Fix $t_0 \in (0,\infty)$.
	\begin{enumerate}[(a)]
		\item The semigroup $(T_t)_{t \in (0,\infty)}$ converges strongly to a rank-$1$ operator if and only if the semigroup $(T_{nt_0})_{n \in \bbN}$ converges strongly 
		to a rank-$1$ operator. In this case, both limit operators coincide.
		\item The semigroup $(T_t)_{t \in (0,\infty)}$ converges to a rank-$1$ operator with respect to the operator norm if and only if 
		the semigroup $(T_{nt_0})_{n \in \bbN}$ converges to a rank-$1$ operator with respect to the operator norm. In this case, both limit operators coincide.
	\end{enumerate}
\end{theorem}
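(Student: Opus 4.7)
The forward direction of both (a) and (b) is immediate: restricting $t \to \infty$ along $t = nt_0$ gives the same rank-$1$ limit. For the reverse direction, assume $(T_{nt_0})_n$ converges (strongly or in operator norm) to a rank-$1$ operator, which is necessarily a projection and can thus be written $P = \varphi \otimes f$ with $\langle \varphi, f\rangle = 1$. Arguing exactly as in the proof of Theorem~\ref{thm:discrete-to-continuous}, the semigroup $\cT$ is bounded; set $M \coloneqq \sup_{t>0} \norm{T_t}$.

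The heart of the argument is to show that $T_t P = P T_t = P$ for every $t > 0$. Passing to the limit $n \to \infty$ in $T_t T_{nt_0} = T_{nt_0} T_t$ yields $T_t P = P T_t$. Since $P$ has rank one, this identity forces $T_t f = \alpha(t) f$ and $T_t' \varphi = \alpha(t) \varphi$ for a single scalar $\alpha(t) \in \bbR$, and the semigroup law makes $\alpha \colon (0,\infty) \to \bbR$ multiplicative. From $T_{nt_0} f \to Pf = f$ we obtain $\alpha(t_0)^n \to 1$, and in the real case this forces $\alpha(t_0) = 1$.

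The main obstacle is then to upgrade this to $\alpha \equiv 1$. Boundedness of $\cT$ gives $\abs{\alpha(s)}^n = \abs{\alpha(ns)} \le M$ for all $n$, so $\abs{\alpha(s)} \le 1$. Using the $t_0$-periodicity of $\alpha$ (a consequence of $\alpha(t_0) = 1$), the relation $\alpha(s)\alpha(Nt_0 - s) = \alpha(Nt_0) = 1$ (for $N$ large enough that $Nt_0 > s$) combined with $\abs{\alpha(Nt_0 - s)} \le 1$ forces $\abs{\alpha(s)} \ge 1$, hence $\abs{\alpha(s)} = 1$ and $\alpha(s) \in \{-1, 1\}$. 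Here the real scalar field is decisive: $\alpha(s) = \alpha(s/2)^2 \ge 0$ combined with $\alpha(s) \in \{-1, 1\}$ yields $\alpha(s) = 1$. Therefore $T_t P = P = P T_t$ for every $t > 0$.

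For any $s > t_0$, write $s = nt_0 + r$ with $r \in (0, t_0]$ and $n \ge 1$; then $T_s - P = T_r T_{nt_0} - T_r P = T_r(T_{nt_0} - P)$. This gives $\norm{T_s - P} \le M \, \norm{T_{nt_0} - P}$ in operator norm for (b), and the pointwise analogue $\norm{(T_s - P)x} \le M \, \norm{(T_{nt_0} - P)x}$ for (a), both tending to $0$ as $s \to \infty$ (since then $n \to \infty$). Hence $(T_t)$ converges to the same limit $P$ in the relevant sense.
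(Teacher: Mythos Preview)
Your proof is correct and follows essentially the same route as the paper's: write $P = \varphi \otimes f$, deduce that $T_t f = \alpha(t) f$ for a multiplicative $\alpha$, show $\alpha \equiv 1$ via the squaring trick $\alpha(s) = \alpha(s/2)^2$, and then use boundedness to propagate the discrete convergence to all times. The only notable difference is how you rule out $\lvert\alpha(s)\rvert < 1$: the paper argues that $\lvert\alpha(t)\rvert < 1$ would give $T_{nt} f \to 0$, which by boundedness would force $T_s f \to 0$ along \emph{all} $s \to \infty$, contradicting $T_{t_0} f = f$; you instead exploit the algebraic identity $\alpha(s)\,\alpha(Nt_0 - s) = \alpha(Nt_0) = 1$ together with $\lvert\alpha(Nt_0 - s)\rvert \le 1$. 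Both are short and valid; your version is a pleasant algebraic shortcut that avoids the auxiliary limit argument, while the paper's version makes the dynamical obstruction (the orbit of $f$ cannot decay) more explicit.
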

\begin{proof}
	Again, we only prove assertion (a) because the proof of assertion (b) is virtually the same. 
	The implication ``$\Rightarrow$'' is obvious and we clearly have $\lim_{t \to \infty} T_t = \lim_{n \to \infty} T_{nt_0}$ if both limits exist.
	
	To prove the implication ``$\Leftarrow$'', assume that $(T_{nt_0})_{n \in \N}$ converges strongly to a rank-$1$ operator 
	$P$ as $n \to \infty$. Using that $\cT$ is locally bounded at $0$, one shows as in the proof of Theorem~\ref{thm:discrete-to-continuous} that the semigroup $\cT$ is bounded.

	Since $P$ is of rank $1$, we find a vector $f_0 \in E \setminus \{0\}$ and a functional $\varphi \in E' \setminus \{0\}$ such that 
	$P = \varphi \otimes f_0$. Clearly, $f_0$ is a fixed vector and $\varphi$ a fixed functional of $T_{t_0}$. 
	We  show next that $f_0$ is a fixed vector of each operator $T_t$. To this end
	define $\alpha_t \coloneqq \langle \varphi, T_tf_0 \rangle \in \bbR$ for every $t\in(0,\infty)$. Since
	\begin{align*}
		T_t f_0 = T_t T_{nt_0}f_0 = T_{nt_0}T_tf_0 \to \langle \varphi, T_t f_0 \rangle f_0 \quad \text{as } n \to \infty,
	\end{align*}
	we have $T_t f_0 = \alpha_t f_0$ for all $t\in(0,\infty)$ and we have to prove that $\alpha_t = 1$ for each time $t$. 
	If we had $\abs{\alpha_t} > 1$ for some $t\in(0,\infty)$, this would imply that $\norm{T_{nt}f_0} = \abs{\alpha_t}^n \norm{f_0} \to \infty$ as $n \to \infty$, 
	which contradicts the boundedness of $\cT$. Now assume that $\abs{\alpha_t} < 1$ for some $t\in(0,\infty)$. Then $T_{tn}f_0 \to 0$ as $n \to \infty$ and,
	since $\cT$ is bounded, we even obtain that $T_sf_0 \to 0$ as $s \to \infty$. This contradicts the fact that $f_0$ is a fixed point of $T_{t_0}$. 
	Hence, $\abs{\alpha_t} = 1$ for all $t\in(0,\infty)$.
	Since the Banach space $E$ is real, $\alpha_t \in \{-1,1\}$ for all $t\in(0,\infty)$. For each $t$ we have
	\begin{align*}
		\alpha_t f_0 = T_t f_0 = T_{\frac{t}{2}} T_{\frac{t}{2}} f_0 = \alpha_{\frac{t}{2}}^2 f_0
	\end{align*}
	and thus, we may conclude from $f_0 \not= 0$ that $\alpha_t = \alpha_{\frac{t}{2}}^2 > 0$. Hence, $\alpha_t = 1$ for all $t\in(0,\infty)$, which proves that $f_0$ is a fixed vector of $\cT$. 
	
	Now, let $f \in E$ be an arbitrary vector and let $\varepsilon > 0$. 
	Choose $n_0 \in \bbN$ such that $\norm{T_{n_0t_0}f - \langle \varphi, f \rangle f_0} \le \eps$. For every $t > n_0t_0$ we then obtain
	\begin{align*}
		\norm{T_tf - \langle \varphi, f\rangle f_0} = \norm{T_{t-n_0t_0}T_{n_0t_0}f - \langle \varphi, f\rangle T_{t-n_0t_0}f_0} \le \sup_{t \in (0,\infty)} \norm{T_t} \cdot \varepsilon. 
	\end{align*}
	This proves that $(T_t)$ converges strongly to $\varphi \otimes f_0$ as $t \to \infty$.
\end{proof}

\begin{remarks}
(a) For the non-trivial implications in Theorem~\ref{thm:discrete-to-continuous-rank-1} the assumption that the limit operator be of rank $1$ cannot be dropped. For example, 
let $\cT = (T_t)_{t \in (0,\infty)}$ a be a non-constant periodic semigroup on $\bbR^2$ with period $t_0 > 0$. Then $(T_{nt_0})_{n \in \bbN}$ converges, but $(T_t)_{t \in (0,\infty)}$ does not converge.
		
(b) The assumption that the Banach space $E$ be real is essential in Theorem~\ref{thm:discrete-to-continuous-rank-1}. Indeed, the semigroup $(e^{it})_{t \in (0,\infty)}$
on $\bbC$ does not converge, while the embedded discrete semigroup $(e^{i2\pi n})_{n \in \bbN}$ is constantly $1$ and thus convergent. On the other hand, 
if $E$ is a complexification of a real Banach space $E_\bbR$ and the semigroup $\cT$ leaves $E_\bbR$ invariant, then Theorem~\ref{thm:discrete-to-continuous-rank-1} clearly remains true.
		
(c) In the case of positive semigroups one can prove the following time-discrete analogue of Theorem~\ref{thm:discrete-to-continuous-rank-1}: let $E$ be a Banach lattice and 
let $T \in \calL(E)$ be a positive operator. If there is an integer $m_0 \in \bbN$ such that $(T^{nm_0})_{n \in \bbN}$ converges strongly (or with respect to the operator norm) to a rank-$1$ 
operator, then $(T^n)_{n \in \bbN}$ converges strongly (or with respect to the operator norm) to the same operator. The proof is essentially the same as 
for Theorem~\ref{thm:discrete-to-continuous-rank-1} where the positivity assumption is used to ensure that none of the numbers $\alpha_n \coloneqq \langle \varphi, T^nf_0 \rangle$ equals $-1$.
Without the additional positivity assumption the assertion is false as the one-dimensional operator $T = -1 \in \calL(\bbR)$ shows. 
\end{remarks}

The third of our three abstract convergence theorems describes how the convergence of a semigroup on a Banach space $E$ with respect to the operator norm 
is related to strong convergence of the semigroup on an ultra power $E^\cU$.

\begin{theorem} \label{thm:strong-ultra-convergence-implies-operator-norm-convergence}
	Let $E$ be a real or complex Banach space and let $\cT = (T_t)_{t \in J}$ be a semigroup on $E$ which is locally bounded at $0$ where either $J = \bbN$ or $J = (0,\infty)$. 
	Let $\cU$ be a free ultra filter on $\bbN$ and let $P \in \calL(E)$. Then the following assertions are equivalent:
	\begin{enumerate}[(i)]
		\item $(T_t)$ converges to $P$ with respect to the operator norm.
		\item $(T_t^\cU)$ converges to $P^\cU$ with respect to the operator norm.
		\item $(T_t^\cU)$ converges strongly to $P^\cU$.
	\end{enumerate}
\end{theorem}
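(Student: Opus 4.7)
The plan is the following. The equivalence (i) $\Leftrightarrow$ (ii) is immediate: the lifting $T \mapsto T^\cU$ is an isometric Banach-algebra homomorphism from $\calL(E)$ into $\calL(E^\cU)$, so $\|T_t - P\| = \|T_t^\cU - P^\cU\|$ for every $t \in J$. The implication (ii) $\Rightarrow$ (iii) is trivial, so the only real content is (iii) $\Rightarrow$ (i), which I sketch by contradiction.

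The strategy for the latter is to use the semigroup law to upgrade, via the ultra power, one-time-only bad vectors into a single universal witness. Two preparatory facts are needed. First, assumption (iii) together with the uniform boundedness principle and local boundedness at $0$ (exactly as in the proof of Theorem~\ref{thm:discrete-to-continuous}) yields $M := \sup_{t \in J}\|T_t\| < \infty$. Second, since $P^\cU$ is the strong limit of the semigroup, the semigroup law gives $T_s^\cU P^\cU = \lim_t T_{s+t}^\cU = P^\cU$ and similarly $P^\cU T_s^\cU = P^\cU$ for every $s \in J$; injectivity of the canonical embedding $E \hookrightarrow E^\cU$ descends these to $T_s P = P T_s = P$ on $E$, which rewrites as the key algebraic identity $T_t - P = T_{t-s}(T_s - P)$ whenever $s < t$ in $J$.

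Now suppose, for contradiction, that $\|T_t - P\| \not\to 0$. Pick $\eps_0 > 0$ and a strictly increasing sequence $t_1 < t_2 < \dots$ in $J$ with $t_k \to \infty$ and $\|T_{t_k} - P\| > \eps_0$; for every $n$ choose $g_n$ in the closed unit ball of $E$ with $\|(T_{t_n}- P) g_n\| > \eps_0/2$. The identity above propagates this single bad vector back to every earlier time: for $k < n$,
\begin{equation*}
	\tfrac{\eps_0}{2} < \|(T_{t_n} - P) g_n\| = \|T_{t_n - t_k}(T_{t_k} - P) g_n\| \le M \|(T_{t_k} - P) g_n\|,
\end{equation*}
so $\|(T_{t_k}-P) g_n\| \ge \eps_1 := \eps_0/(2M)$ for all $k < n$. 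Setting $g := (g_n)^\cU \in E^\cU$, the set $\{n : n > k\}$ is cofinite and hence belongs to $\cU$, so that
\begin{equation*}
	\|(T_{t_k}^\cU - P^\cU) g\| = \lim_{n \to \cU} \|(T_{t_k} - P) g_n\| \ge \eps_1
\end{equation*}
for every $k \in \bbN$. Since $t_k \to \infty$, this contradicts the assumption (iii).

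The main subtlety to flag is that a purely naive packing is doomed: if each $g_n$ only witnessed failure at the single time $t_n$, then the ultrafilter limit would be blind to the diagonal information $\|(T_{t_n} - P) g_n\| > \eps_0/2$, because a free ultrafilter on $\bbN$ ignores singletons. The point of invoking $T_s P = P$ together with the resulting algebraic identity is precisely to turn a single-time witness $g_n$ into a simultaneous witness for \emph{all} earlier bad times $t_k$ with $k < n$, so that the set of "good" indices for each fixed $k$ becomes cofinite and therefore $\cU$-large; from there the ultrafilter limit cooperates and produces the desired contradiction.
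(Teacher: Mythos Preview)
Your proof is correct, and it takes a genuinely different route from the paper's own argument.

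The paper first reduces to the discrete case $J=\bbN$ via Theorem~\ref{thm:discrete-to-continuous}(b), complexifies, and then argues spectrally: setting $Q=\id-P$ it shows that $(T^\cU Q^\cU)^n \to 0$ strongly, and if $r(TQ)=1$ there would be an approximate eigenvalue of $TQ$ on the unit circle, which becomes an actual eigenvalue of $(TQ)^\cU$ in the ultrapower and contradicts that strong convergence. Hence $r(TQ)<1$ and $T^n = P + (TQ)^n \to P$ in operator norm.

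Your argument avoids spectral theory and complexification entirely. Using only the algebraic identity $T_t-P=T_{t-s}(T_s-P)$ (which follows from $T_sP=P$), you propagate a single-time norm witness $g_n$ backwards to all earlier bad times $t_k$, and then package the resulting uniformly-bad sequence into one element $g=(g_n)^\cU$ of the ultrapower. This is more elementary and handles both $J=\bbN$ and $J=(0,\infty)$ in one stroke, without invoking Theorem~\ref{thm:discrete-to-continuous}. A minor point of phrasing: your boundedness step actually applies the uniform boundedness principle on $E^\cU$ (to the strongly convergent sequence $(T_n^\cU)$) rather than on $E$, and then uses $\norm{T_n^\cU}=\norm{T_n}$; this is perfectly fine but not \emph{exactly} the argument in Theorem~\ref{thm:discrete-to-continuous}. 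The paper's spectral approach, on the other hand, gives the extra conceptual information that the restriction of $T$ to $\ker P$ has spectral radius strictly less than one, which your method does not make explicit.
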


In the context of semigroup convergence on ultra powers we also refer the reader to the paper \cite{nagel1993} where so-called \emph{superstable} operators are studied.
Before we proceed with the proof of Theorem~\ref{thm:strong-ultra-convergence-implies-operator-norm-convergence} we stress some important observations.

\begin{remarks} \label{rem:remarks-on-strong-convergence-on-ultra-powers}
	(a) Theorem~\ref{thm:strong-ultra-convergence-implies-operator-norm-convergence} does \emph{not} assert that strong convergence of the lifted 
	semigroup $\cT^\cU \coloneqq (T_t^\cU)_{t \in J}$ implies operator norm convergence of the original semigroup $\cT$. 
	In fact, the theorem only makes this assertion under the additional assumption that the strong limit $\lim_{t \to \infty} T_t^\cU \in \calL(E^\cU)$ is of the form $P^\cU$ for an operator $P \in \calL(E)$. 
	Without this assumption the assertion is false as the following counterexample shows.
	
	Let $E \coloneqq \ell^2 \coloneqq \ell^2(\bbN;\bbC)$ and let $A \in \calL(\ell^2)$ be the multiplication operator with symbol $(-\frac{1}{n})_{n \in \bbN}$.
	For each $t\geq 0$ define $T_t \coloneqq e^{tA}$.  Then it follows from~\cite[Thm~5.5.5(b)]{arendt2011} that the $C_0$-semigroup $\cT\coloneqq (T_t)_{t\in [0,\infty)}$ converges strongly to $0$. 
	However, since the spectral bound of $A$ equals $0$, the semigroup $\cT$ does not converge to $0$ with respect to the operator norm and 
	hence it does not converge with respect to the operator norm at all.
	
	On the other hand, let $\cU$ be any free ultra filter on $\bbN$ and consider the semigroup $\cT^\cU = (T_t^\cU)_{t\in [0,\infty)}$.
	Then $\cT^\cU$ converges with respect to the strong operator topology in $\calL(E^\cU)$. 
	To see this, first note that $\cT^\cU$ is a $C_0$-semigroup on $(\ell^2)^\cU=E^\cU$ with generator $A^\cU$. 
	Moreover, $E^\cU$ is a Hilbert space and the point spectrum of $A^\cU$ intersects the imaginary axis only in $0$. 
	Therefore, \cite[Thm~5.5.6(b)]{arendt2011} implies that $\cT^\cU$ is strongly convergent
	and Theorem~\ref{thm:strong-ultra-convergence-implies-operator-norm-convergence} shows that the limit operator cannot be of the form $P^\cU$ for $P \in \calL(E)$.
		
	(b) The semigroup structure of the net $(T_t)_{t \in J}$ is essential in Theorem~\ref{thm:strong-ultra-convergence-implies-operator-norm-convergence}, 
	i.e.\ the implication ``(iii) $\Rightarrow$ (i)'' does not hold for arbitrary sequences (or nets) in $\calL(E)$.
	In order to construct a counterexample, let $E = \ell^2 \coloneqq \ell^2(\bbN;\bbC)$ and, for every $n \in \bbN$, 
	define $T_n \in \calL(\ell^2)$ to be the multiplication operator whose symbol is the $n$-the canonical unit vector $e_n \in \ell^\infty$.
	Clearly, $(T_n)$ converges strongly to $0$ as $n \to \infty$, but $\norm{T_n} = 1$ for all indices $n$, so $(T_n)$ does not converge with respect to the operator norm. 
	Nevertheless, for any free ultra filter $\cU$ on $\bbN$ the sequence of lifted operators $(T_n^\cU)$ on the ultra power $E^\cU = (\ell^2)^\cU$ converge strongly to $0$. 
	To see this, let $f = (f_k)_{k \in \bbN} \subseteq \ell^2$ be a bounded sequence. 
	Aiming for a contradiction, we assume that $(T_n^\cU f^\cU)$ does not converge to $0$ in $E^\cU$.
	Then we can find an $\varepsilon > 0$ and a strictly increasing sequence of integers $(n_m)_{m \in \bbN}$ such that $\norm{T_{n_m}^\cU f^\cU} \ge \varepsilon$ for every index $m$. 
	Hence, we have for every $m \in \bbN$
	\begin{align*}
		\lim_{k,\cU} \abs{f_k(n_m)} = \lim_{k,\cU} \norm{T_{n_m}f_k} = \norm{T_{n_m}^\cU f^\cU} \ge \varepsilon.
	\end{align*}
	Now, let $M \subseteq \bbN$ be an arbitrary finite set. 
	For every $m \in M$ we can find a set $U_m \in \cU$ such that $\abs{f_k(n_m)} \ge \frac{\varepsilon}{2}$ for all $k \in U_m$. 
	Since $\cU$ is a filter, the intersection $\bigcap_{m \in M} U_m$ is non-empty, so we can find a number $k_M \in \bigcap_{m \in M} U_m$, 
	and this number fulfils $\abs{f_{k_M}(n_m)} \ge \frac{\varepsilon}{2}$ for all $m \in M$. 
	Since all $n_m$ are distinct, this implies
	\begin{align*}
		\norm{f_{k_M}}^2 \ge \abs{M} \left(\frac{\varepsilon}{2}\right)^2.
	\end{align*}
	This is a contradiction as the sequence $(f_k)$ is bounded in $\ell^2$. 
	Thus $(T_n^\cU)_{n \in \bbN}$ converges strongly to $0$.
\end{remarks}

\begin{proof}[Proof of Theorem~\ref{thm:strong-ultra-convergence-implies-operator-norm-convergence}]
	We may assume throughout the proof that the scalar field is complex; otherwise, we replace $E$ with a Banach space complexification of $E$.
	
	The implications ``(i) $\Rightarrow$ (ii)'' and ``(ii) $\Rightarrow$ (iii)'' are obvious. 
	To prove the non-trivial implication ``(iii) $\Rightarrow$ (i)''
	it suffices to consider the case $J=\N$; the assertion for $J = (0,\infty)$ then follows from Theorem~\ref{thm:discrete-to-continuous}(b).
	So we have $\cT = (T^n)_{n \in \bbN}$ for $T \coloneqq T_1 \in \calL(E)$. 
	
	Clearly, $(T^n)$ is strongly convergent to $P$. Thus $P$ is a projection and we have $TP = P$. Define $Q \coloneqq \id - P$. The projection $Q^\cU$ commutes with $T^\cU$ and we have
	\begin{align}
		\label{eq:convergence-to-zero-on-ultra-power}
		\big((TQ)^\cU\big)^n = (T^\cU Q^\cU)^n = (T^\cU)^nQ^\cU \to 0
	\end{align}
	with respect to the strong operator topology on $\calL(E^\cU)$ as $n \to \infty$. We show that this implies that $r(TQ) < 1$. 
	Since $TQ$ is power bounded, we have $r(TQ)\leq 1$. 
	Now assume that $r(TQ) = 1$ and pick a  spectral value $\lambda \in \sigma(TQ)$ with $\abs{\lambda} = r(TQ) = 1$. 
	As $\lambda \in \partial \sigma(TQ)$, the value $\lambda$ is an approximate eigenvalue of $TQ$ and hence an eigenvalue of $(TQ)^\cU$ \cite[Thm~V.1.4(ii)]{schaefer1974}.
	This contradicts~\eqref{eq:convergence-to-zero-on-ultra-power}.
	
	Therefore, $r(TQ)<1$ and this implies that $\norm{T^nQ} = \norm{(TQ)^n} \to 0$ as $n\to\infty$. Thus we have
	\begin{align*}
		T^n = T^n(P+Q) = P + T^nQ \to P \quad \text{as } t\to\infty
	\end{align*}
	with respect to the operator norm. 
\end{proof}

\section{The Lasota--Yorke Theorem Revisited} \label{section:lasota-yorke}

In this section we revisit a famous theorem of Lasota and Yorke, 
which asserts that a Markov semigroup on an $L^1$-space admitting a non-zero lower bound is strongly convergent.
It was originally proved in \cite[Thm~2, Rem~3]{lasota1982} for $L^1$-spaces over $\sigma$-finite measures spaces. 
However, the theorem is also true without any restriction on the measure space (see Theorem~\ref{thm:lasota-yorke} below or \cite[Thm~15]{emelyanov2004a}). 
This is not only a nice gadget, but it is crucial to several of our following results in whose proofs $L^1$-spaces occur in an abstract way 
(e.g.\ as ultra powers), so that we cannot ensure that the underlying measure space is $\sigma$-finite.

Recall that a Banach lattice $E$ is isometrically Banach lattice isomorphic to $L^1(\Omega,\mu)$ for some measure space 
$(\Omega,\mu)$ if and only if $E$ is a so-called \emph{AL-space}, meaning that the norm on $E$ is additive on the positive cone. 
We prefer not to use the representation of vectors in such spaces as integrable functions in our proofs but to argue with the abstract properties of AL-spaces, instead. 
This has certain theoretical advantages: on the one hand it becomes much clearer which properties of the space are used in the proofs; 
on the other hand we do not have to care about any measurability questions or about any problems that might be caused by non-$\sigma$-finite measures. 

Let us start by recalling the definition of a lower bound:

\begin{definition} \label{def:lower-bound-norm}
	Let $E$ be a Banach lattice and let $\cT = (T_t)_{t \in J}$ be a semigroup of positive operators on $E$, 
	where either $J = \bbN$ or $J = (0,\infty)$. 
	A vector $h \in E_+$ is called a \emph{lower bound} for $\cT$ if $\norm{(T_tf - h)^-} \to 0$ as $t \to \infty$ for every $f \in E_+$ with $\norm{f}=1$.
\end{definition}

While $0$ is always a lower bound, the Lasota-Yorke theorem asserts that the existence of a non-zero lower bound implies strong convergence of the semigroup. 
Here, we state the theorem in a version for AL-spaces, where the semigroup might be defined for discrete times or on the time interval $(0,\infty)$. 

\begin{theorem}[Lasota--Yorke] \label{thm:lasota-yorke}
	Let $E$ be an AL-space and let $\cT = (T_t)_{t \in J}$ be a Markov semigroup on $E$ where either $J = \bbN$ or $J = (0,\infty)$. 
	Then the following assertions are equivalent:
	\begin{enumerate}[(i)]
		\item There exists a normalised fixed point $f_0 \in E_+$ of $\cT$ such that $\cT$ converges strongly to the rank-1 projection $P=\mathds{1}\otimes f_0$.
		\item There exists a non-zero lower bound for $\cT$.
	\end{enumerate}
\end{theorem}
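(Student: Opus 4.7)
The implication (i) $\Rightarrow$ (ii) is immediate: the fixed point $f_0$ itself is a non-zero lower bound, since $\norm{(T_tf - f_0)^-} \le \norm{T_tf - f_0} \to 0$ for every normalised $f \in E_+$.

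For the non-trivial direction (ii) $\Rightarrow$ (i) I would first invoke Theorem~\ref{thm:discrete-to-continuous-rank-1}(a) to reduce to the discrete operator $T := T_1$: any lower bound for $\cT$ is also a lower bound for $(T^n)_{n \in \bbN}$, and convergence of $(T^n)$ to a rank-$1$ projection transfers back to the continuous semigroup. Set $\alpha := \norm{h}$. Since $\norm{(T^n f - h)^-} = \norm{h} - \norm{T^n f \wedge h} \ge \alpha - 1$ for every normalised $f \in E_+$ (using additivity of the norm on the positive cone and $\norm{T^n f} = 1$), the lower bound hypothesis forces $\alpha \in (0,1]$.

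The central quantitative step is the contraction estimate: for every pair of normalised $f, g \in E_+$ there exists $n_0 \in \bbN$ with
\[
	\norm{T^{n_0}f - T^{n_0}g} \le 2 - \alpha.
\]
To prove it, choose $n_0$ so large that $\norm{(T^{n_0}f - h)^-}$ and $\norm{(T^{n_0}g - h)^-}$ are both smaller than $\alpha/4$, set $u := T^{n_0}f \wedge h$ and $v := T^{n_0}g \wedge h$, and use the lattice inequality $u \wedge v \ge u + v - h$, which is valid because $u, v \le h$. Evaluating the norm functional $\mathds{1}$ yields $\norm{T^{n_0}f \wedge T^{n_0}g} \ge \norm{u} + \norm{v} - \norm{h} \ge \alpha/2$, and the AL identity $\norm{f - g} = \norm{f} + \norm{g} - 2\norm{f \wedge g}$ for positive $f, g$ then gives the claim.

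To bootstrap this single-step estimate into actual convergence I would iterate. Decompose $T^{n_0}f - T^{n_0}g$ as a difference $u' - v'$ of positive vectors of common norm $s \le 1 - \alpha/2$, renormalise, and re-apply the contraction estimate to the normalised pair $u'/s, v'/s$ to obtain a further factor $1 - \alpha/2$ after $n_1$ additional steps. Since the Markov property makes $T$ a contraction on all of $E$, the quantity $\norm{T^n f - T^n g}$ is monotone in $n$, so the geometric bound $2(1 - \alpha/2)^k$ reached at the times $n_0 + \dots + n_{k-1}$ propagates to all large $n$ and yields $\norm{T^n f - T^n g} \to 0$. Applied with $g = T^m f$, this shows every orbit $(T^n f)$ is Cauchy, hence converges to a common limit $f_0 \in E_+$ with $\norm{f_0} = 1$ and $Tf_0 = f_0$; linearity and the Jordan decomposition then extend the convergence to $T^n \to \mathds{1} \otimes f_0$ strongly on $E$. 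The expected main obstacle is precisely that the time $n_0$ in the contraction estimate depends on the pair $(f, g)$, and the renormalisation step is exactly the device that turns this non-uniform single contraction into the compounding geometric decay that produces convergence.
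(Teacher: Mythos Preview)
Your Doeblin-style contraction argument is a genuinely different route from the paper's proof, and the contraction estimate together with its renormalised iteration is correct: it does establish that $\lim_{n\to\infty}\norm{T^nf-T^ng}=0$ for every pair of normalised $f,g\in E_+$. The paper, by contrast, first constructs the limit explicitly as the maximal lower bound $h_{\max}$ (Lemma~\ref{lem:hmax-uniform}), shows $\norm{h_{\max}}=1$ by a bootstrapping argument, and then invokes Lemma~\ref{lem:asymptotic-domination-of-normalised-vectors-implies-convergence}.

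There is, however, a real gap in your final step. From $\norm{T^nf-T^ng}\to 0$ for every \emph{fixed} pair you cannot conclude that $(T^nf)$ is Cauchy. Taking $g=T^mf$ gives $\norm{T^nf-T^{n+m}f}\to 0$ as $n\to\infty$ for each fixed $m$, but the Cauchy criterion needs $\sup_{m\ge 0}\norm{T^nf-T^{n+m}f}\to 0$, and your iteration does not supply this uniformity: after the first renormalisation the new pair $(u'/s,v'/s)$ depends on $m$, so the subsequent times $n_1,n_2,\dots$ do too. Equivalently, you have shown $T^nx\to 0$ for every $x$ in the hyperplane $\ker\mathds{1}$, but convergence of $T^nf$ amounts to convergence of the partial sums $\sum_{k=0}^{n-1}T^k(Tf-f)$, which requires more than termwise decay. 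Your closing sentence correctly identifies the obstacle but the renormalisation device overcomes it only for a single pair, not for the whole family $\{(f,T^mf):m\ge 0\}$.

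The cleanest repair is to feed in one extra ingredient: a normalised positive fixed vector. The set of lower bounds is upward directed and norm-bounded by $1$, so in the KB-space $E$ it has a supremum $h_{\max}$; invariance of the set under $T$ together with the Markov property forces $Th_{\max}=h_{\max}$ (this is exactly Lemma~\ref{lem:hmax-uniform}). Now apply your contraction result with $g=h_{\max}/\norm{h_{\max}}$ to obtain $T^nf\to h_{\max}/\norm{h_{\max}}$ directly, with no Cauchy argument needed; taking norms shows a posteriori that $\norm{h_{\max}}=1$. With this addition your argument becomes a valid alternative proof.
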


In the literature, the theorem is usually stated on $L^1$-spaces over $\sigma$-finite measure spaces; a version on AL-spaces can be found in \cite[Thm~15]{emelyanov2004a}. 
Here we want to include a version of the original proof from \cite[Thm~2]{lasota1982} on AL-spaces. Although only minor adjustments are necessary to make the proof work on AL-spaces, we chose to include the complete proof here, for the following two reasons: 
on the one hand, it can be written down in a very concise way in the setting of AL-spaces; 
on the other hand, we use the proof as a blueprint for the technically more involved proof of Theorem~\ref{thm:ind-lower-bounds-with-norm-bounds-for-markov-sg} in Section~\ref{section:individual-lower-bounds}.
So the reader might find it convenient to first read the proof of Theorem~\ref{thm:lasota-yorke} before proceeding with the proof of Theorem~\ref{thm:ind-lower-bounds-with-norm-bounds-for-markov-sg}.

It is rather common in the literature to prove the Lasota--Yorke theorem first for the time discrete case. 
The case $J = (0,\infty)$ is then reduced to the first one by using a simpler version of Theorem~\ref{thm:discrete-to-continuous}(a); 
see e.g.\ \cite[Thm~5.6.2, Thm~7.4.1]{lasota1994} or \cite[Thm~3.2.1]{emelyanov2007}. 
On the other hand, in~\cite[Thm~1.1]{lasota1983} an argument is given where $J$ is allowed to be a more general subsemigroup of the positive real numbers. We also treat the cases $J = \bbN$ and $J = (0,\infty)$ at the same time.

Let us first show the following two simple lemmas:

\begin{lemma} \label{lem:hmax-uniform}
	Assume that in the situation of Theorem~\ref{thm:lasota-yorke} the condition (ii) is fulfilled and let $H\subseteq E_+$ denote the set of all lower bounds for $\cT$. 
	Then $H$ has a maximum $h_{\max}$ and this maximum fulfils $0<\norm{h_{\max}}\leq 1$ and $T_t h_{\max} = h_{\max}$ for all $t\in J$.
\end{lemma}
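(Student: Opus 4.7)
The plan is to verify in turn that $H$ is upward directed and norm-bounded by~$1$, that its supremum lives in $H$, and that this supremum is the required invariant lower bound.

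First I would check closure under finite suprema. If $h_1,h_2\in H$, then for normalised $f\in E_+$
\[
\bigl(T_tf-h_1\vee h_2\bigr)^-=\bigl((T_tf-h_1)\wedge (T_tf-h_2)\bigr)^- =(T_tf-h_1)^-\vee(T_tf-h_2)^-,
\]
which is dominated in norm by $\norm{(T_tf-h_1)^-}+\norm{(T_tf-h_2)^-}\to 0$; so $h_1\vee h_2\in H$ and $H$ is upward directed. Next I would show $\norm{h}\le 1$ for every $h\in H$. Since $h\ge 0$, the identity $T_tf=(T_tf-h)^+-(T_tf-h)^-+h$ yields $h\le T_tf+(T_tf-h)^-$, hence, using that $E$ is an AL-space and that $T_t$ is Markov,
\[
\norm{h}\le \norm{T_tf}+\norm{(T_tf-h)^-}=1+\norm{(T_tf-h)^-}\xrightarrow{\,t\to\infty\,}1.
\]

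Because $E$ is an AL-space, it is a KB-space, so the upward directed, norm-bounded net $(h)_{h\in H}$ converges in norm to some $h_{\max}=\sup H$ with $\norm{h_{\max}}\le 1$. To see that $h_{\max}\in H$, fix a normalised $f\in E_+$ and $\eps>0$, choose $h\in H$ with $\norm{h_{\max}-h}\le \eps$, and use the elementary inequality $(a-b)^-\le a^-+b$ valid for $b\ge 0$:
\[
(T_tf-h_{\max})^-=\bigl((T_tf-h)-(h_{\max}-h)\bigr)^-\le (T_tf-h)^-+(h_{\max}-h),
\]
so $\limsup_t\norm{(T_tf-h_{\max})^-}\le \eps$. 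Since $\eps$ was arbitrary, $h_{\max}\in H$, and $\norm{h_{\max}}>0$ because $h_{\max}$ dominates the assumed non-zero lower bound.

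The main (but still short) point is fixedness. I would first show that $T_sH\subseteq H$ for every $s\in J$: positivity of $T_s$ gives $(T_tg-T_sh)^-=(T_s(T_{t-s}g-h))^-\le T_s\bigl((T_{t-s}g-h)^-\bigr)$ for $t>s$ and any normalised $g\in E_+$, and since $T_s$ is contractive on $E$ (Markov on an AL-space), the right hand side has norm tending to zero. Applied to $h_{\max}$, this yields $T_sh_{\max}\in H$, hence $T_sh_{\max}\le h_{\max}$ by maximality. Finally, the Markov property gives $\norm{T_sh_{\max}}=\norm{h_{\max}}$, and on an AL-space two positive vectors with the same norm one of which dominates the other must coincide (by additivity of the norm on $E_+$ applied to the difference). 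Hence $T_sh_{\max}=h_{\max}$, and the only step requiring care is the passage from the directed supremum to an element of $H$, which the inequality $(a-b)^-\le a^-+b$ handles cleanly.
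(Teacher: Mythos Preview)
Your proof is correct and follows essentially the same route as the paper's: upward directedness of $H$ via $(a-x\vee y)^-\le (a-x)^-+(a-y)^-$, the norm bound $\norm{h}\le 1$, existence of $h_{\max}=\sup H\in H$ via the KB-property, $\cT$-invariance of $H$, and the conclusion $T_sh_{\max}=h_{\max}$ from the Markov property together with strict monotonicity of the AL-norm. The only cosmetic difference is that the paper simply remarks that $H$ is closed (hence contains the limit $h_{\max}$), whereas you spell this out via the inequality $(a-b)^-\le a^-+b$; both arguments are equally short.
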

\begin{proof}
	One easily checks that $H$ is closed and $\cT$-invariant and that $\norm{h}\le 1$ for all $h \in H$. 
	Moreover, it follows from $(a - x \lor y)^- \le (a-x)^- \lor (a-y)^- \le (a-x)^- + (a-y)^-$ for all $a,x,y \in E$ that $h_1 \lor h_2 \in H$ for all $h_1,h_2 \in H$.
	Hence, $(h)_{h \in H}$ is a norm bounded increasing net in $E$ and since every AL-space is a KB-space, $h_{\max }\coloneqq \lim_{h \in H}h$ exists in $E$. 
	Clearly, $h_{\max} = \sup H$, $h_{\max} \in H$ and $\norm{h_{\max}} \le 1$. Since $H\neq \{0\}$ by assumption, we also have that $h_{\max}>0$. 
	Moreover, for every time $t\in J$ we have $T_th_{\max} \in H$ and thus $T_th_{\max} \le h_{\max}$. 
	Yet, as $T_t$ is a Markov operator and as the norm on $E$ is strictly monotone, we conclude that in fact $T_th_{\max} = h_{\max}$.
\end{proof}

\begin{lemma} \label{lem:asymptotic-domination-of-normalised-vectors-implies-convergence}
	Let $E$ be an AL-space, let $h \in E_+$ of norm $1$ and let $(f_j)_{j \in J} \subseteq E_+$ be a net of vectors of norm $1$. 
	Assume that 
	\begin{align*}
		\lim_j \norm{(f_j - h)^-} = 0.
	\end{align*}
	Then $(f_j)$ converges to $h$.
\end{lemma}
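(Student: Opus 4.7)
The plan is to exploit two characteristic features of AL-spaces: the existence of the norm functional $\mathds{1} \in E'$ which acts as the norm on the positive cone, and the additivity of the norm on sums of disjoint positive vectors.

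First, I would note that since $f_j$ and $h$ both lie in $E_+$ and have norm $1$, applying $\mathds{1}$ gives $\langle \mathds{1}, f_j\rangle = \langle \mathds{1}, h\rangle = 1$, hence $\langle \mathds{1}, f_j - h\rangle = 0$. Decomposing $f_j - h = (f_j - h)^+ - (f_j - h)^-$ into its positive and negative parts (which are themselves in $E_+$) and evaluating $\mathds{1}$ on both sides then yields the crucial identity
\begin{align*}
    \norm{(f_j - h)^+} = \langle \mathds{1}, (f_j - h)^+\rangle = \langle \mathds{1}, (f_j - h)^-\rangle = \norm{(f_j - h)^-}.
\end{align*}

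Next, I would use that $(f_j - h)^+$ and $(f_j - h)^-$ are disjoint positive vectors whose sum is $\abs{f_j - h}$. Since the norm on a Banach lattice is invariant under taking the modulus, and since the AL-property gives additivity of the norm on sums of positive vectors, I obtain
\begin{align*}
    \norm{f_j - h} = \norm{\,\abs{f_j - h}\,} = \norm{(f_j - h)^+} + \norm{(f_j - h)^-} = 2\,\norm{(f_j - h)^-}.
\end{align*}
Combining this with the hypothesis $\norm{(f_j - h)^-} \to 0$ finishes the proof.

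There is no genuine obstacle here; the entire argument rests on the interplay between the norm functional $\mathds{1}$ and the lattice operations, which is exactly what distinguishes AL-spaces. The assumption that both $h$ and each $f_j$ are normalised and positive is used precisely to force $\langle \mathds{1}, f_j - h\rangle = 0$, thereby turning the one-sided control on $(f_j - h)^-$ into two-sided control on $f_j - h$.
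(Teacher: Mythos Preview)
Your proof is correct and follows essentially the same approach as the paper: both use the additivity of the norm on the positive cone of an AL-space to deduce that $\norm{(f_j-h)^+}$ is controlled by $\norm{(f_j-h)^-}$. Your version is slightly more direct, obtaining the exact identity $\norm{(f_j-h)^+}=\norm{(f_j-h)^-}$ via the norm functional $\mathds{1}$, whereas the paper derives only the inequality $\norm{(f_j-h)^+}\le\norm{(f_j-h)^-}$ from the decomposition $f_j=(f_j-h)^+ + h - (f_j-h)^-$ together with the triangle inequality; both are of course sufficient.
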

\begin{proof}
	It suffices to show that $\lim_{j} \norm{(f_j-h)^+}= 0$. We have
	\begin{align*}
		1 & = \norm{f_j} = \norm{(f_j - h)^+ + h - (f_j - h)^-} \\
		& \ge \norm{(f_j - h)^+ + h} - \norm{(f_j - h)^-} = \norm{(f_j - h)^+} + \norm{h} - \norm{(f_j - h)^-},
	\end{align*}
	where the last equality follows from the fact that the norm is additive on $E_+$.
	Since $\norm{h} = 1$ it follows that $\norm{(f_j - h)^+} \le \norm{(f_j - h)^-} \to \infty$, which proves the assertion.
\end{proof}

\begin{proof}[Proof of Theorem~\ref{thm:lasota-yorke}]
	(i) $\Rightarrow$ (ii): If (i) one holds, then $\lim_{t \to \infty} T_t f = f_0$ for all $f \in E_+$ of norm $1$. 
	Hence, $f_0$ is a non-zero lower bound for $\cT$.

	(ii) $\Rightarrow$ (i): Assume that (ii) holds. Let $h_{\max}>0$ be the maximal lower bound of $\cT$ given by Lemma \ref{lem:hmax-uniform}
	and recall that $T_th_{\max}=h_{\max}$ for all $t\in J$.  We show  that  $\norm{h_{\max}}=1$.
	Indeed, suppose to the contrary that $\delta \coloneqq 1 - \norm{h_{\max}} > 0$. 
	We show that $(1+\delta)h_{\max}$ is then also a lower bound for $\cT$.
	So let $f \in E_+$, $\norm{f} = 1$ and let $\varepsilon > 0$. 
	There is a time $t_0\in J$ and a positive vector $e_{t_0}$ such that $T_{t_0} f + e_{t_0} \ge h_{\max}$ and $\norm{e_{t_0}} < \varepsilon$.
	The vector $g \coloneqq T_{t_0} f + e_{t_0} - h_{\max}$ is positive and has norm $\norm{g} \ge 1 - \norm{h_{\max}}= \delta$. 
	For all sufficiently large times $s\in J$, $s \ge s_0$ say, there is a vector $\tilde e_s$ such that $\norm{\tilde e_s} < \varepsilon$ and $T_sg + \tilde e_s \ge \norm{g} h_{\max} \ge \delta h_{\max}$.
	Hence we obtain for all $s \ge s_0$
	\begin{align*}
		T_sT_{t_0}f = T_sg + T_sh_{\max} - T_s e_{t_0} \ge (1+\delta)h_{\max} - T_s e_{t_0} - \tilde e_s.
	\end{align*}
	Since $\norm{T_s e_{t_0} + \tilde e_s} < 2 \varepsilon$, we have shown that $\norm{(T_tf-(1+\delta)h_{\max})^-} < 2\varepsilon$ for  all $t \ge t_0 + s_0$. 
	Since $f$ was arbitrary, this shows that $(1+\delta)h_{\max}$ is indeed a lower bound for $\cT$ which contradicts the maximality of $h_{\max}$. 
	We therefore proved that $\norm{h_{\max}} = 1$.
	
	Finally, let $f \in E_+$ with $\norm{f} = 1$. 
	Since $T_tf$ is of norm $1$ for all $t\in J$ and asymptotically dominates the vector $h_{\max}$ which also has norm $1$, 
	we conclude from Lemma~\ref{lem:asymptotic-domination-of-normalised-vectors-implies-convergence} that $T_tf \to h_{\max} = \langle \mathds{1}, f \rangle h_{\max}$ as $t \to \infty$. 
	By linearity $\lim T_tf = \langle \mathds{1}, f \rangle h_{\max}$ actually holds for all $f \in E$ which proves that (i) is true.
\end{proof}

It is natural to ask whether the Lasota-Yorke theorem remains true for semigroups which are not Markov but merely bounded 
(where, of course, we have to modify assertion (i) appropriately such that the trivial implication ``(i) $\Rightarrow$ (ii)'' remains true). 
The answer to this question is ``yes''.
For contractive semigroups in the time discrete case this was proved by Zalewska-Mitura in \cite[Thm~2.1]{zalewska-mitura1994}; 
by a somewhat technical procedure she reduced the assertion to the case of Markov operators. On the other hand, Komorn\'{i}k noted in \cite[Cor~5.1]{komornik1993} 
that the Lasota-Yorke theorem remains true even for power bounded positive operators; he derived this from a result about quasi-constrictive operators; see also \cite[Thm~3, Thm~4]{emelyanov2006} Using
Theorem~\ref{thm:discrete-to-continuous}(a), one can also treat the case $J = (0,\infty)$.

We are now going to give an alternative proof for the fact that Theorem~\ref{thm:lasota-yorke} remains true for bounded positive semigroups. Our argument is quite abstract and very short.

\begin{corollary} \label{cor:lasota-yorke-for-bounded-semigroups}
	Let $E$ be an AL-space and let $\cT = (T_t)_{t \in J}$ be a bounded semigroup on $E$ where either $J = \bbN$ or $J = (0,\infty)$. Then the following assertions are equivalent:
	\begin{enumerate}[(i)]
		\item $\cT$ converges strongly to the rank-1 projection $P=\phi \otimes f_0$, where 
		$f_0\in E_+$ is a normalised fixed vector of $\cT$ and $\phi\in E'$ is a positive fixed functional of $\cT$ such that $\applied{\phi}{f_0} =1$ and $\applied{\phi}{f} \geq \eps$ 
		for all $f \in E_+$ of norm $1$ and some constant $\varepsilon > 0$.
		\item There exists a non-zero lower bound $h$ for $\cT$.
	\end{enumerate}
\end{corollary}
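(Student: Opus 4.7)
The direction (i) $\Rightarrow$ (ii) is immediate: the hypothesis forces $T_t f \to \applied{\phi}{f} f_0$ for every $f$, and if $\applied{\phi}{f} \ge \eps$ whenever $f \in E_+$ is normalised, then $\eps f_0 > 0$ is a lower bound because $(T_t f - \eps f_0)^- \to ((\applied{\phi}{f} - \eps) f_0)^- = 0$ in norm.

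For (ii) $\Rightarrow$ (i) I would proceed by renormalising $E$ so that $\cT$ becomes a Markov semigroup, and then invoke Theorem~\ref{thm:lasota-yorke}. By Theorem~\ref{thm:discrete-to-continuous-rank-1}(a), and because bounded semigroups are automatically locally bounded at $0$, it suffices to handle the discrete case $J = \bbN$; accordingly, set $T \coloneqq T_1$.

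To produce the required fixed functional I would start from the AL-space identity $T^n f + (T^n f - h)^- = (T^n f - h)^+ + h$. Taking norms (which are additive on $E_+$) gives
\begin{align*}
	\norm{T^n f} = \norm{(T^n f - h)^+} + \norm{h} - \norm{(T^n f - h)^-},
\end{align*}
so the hypothesis in (ii) yields $\liminf_n \norm{T^n f} \ge \norm{h}$ for every normalised $f \in E_+$. Fix a free ultra filter $\cU$ on $\bbN$ and define $\phi \in E'$ by $\applied{\phi}{f} \coloneqq \lim_\cU \applied{\mathds{1}}{T^n f}$. Then $\phi$ is bounded (since $\cT$ is bounded) and positive, the shift-invariance of $\cU$ forces $T' \phi = \phi$, and the previous estimate gives $\applied{\phi}{f} \ge \norm{h}$ for every normalised $f \in E_+$.

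Now endow $E$ with the equivalent lattice norm $\norm{f}_\phi \coloneqq \applied{\phi}{\abs{f}}$; the sandwich $\norm{h}\,\norm{f} \le \norm{f}_\phi \le \norm{\phi}\,\norm{f}$ shows equivalence, and additivity of $\phi$ on $E_+$ makes $(E, \norm{\cdot}_\phi)$ an AL-space on which $\cT$ is Markov, thanks to $T' \phi = \phi$. The key step is the transfer of the lower bound: for $f \in E_+$ with $\norm{f}_\phi = 1$ one has $\norm{f} \ge 1/\norm{\phi}$, and applying (ii) to $f/\norm{f}$ together with the inequality $\norm{f} h \ge h/\norm{\phi}$ shows that $\tilde h \coloneqq h/\norm{\phi}$ is a non-zero lower bound for $\cT$ on $(E, \norm{\cdot}_\phi)$. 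Theorem~\ref{thm:lasota-yorke} then yields strong convergence of $\cT$ to a rank-$1$ projection $\phi \otimes f_0$ with $f_0 \in E_+$ a $\norm{\cdot}_\phi$-normalised fixed vector; equivalence of norms upgrades this to $\norm{\cdot}$-strong convergence, and the final renormalisation $f_0 \mapsto f_0/\norm{f_0}$, $\phi \mapsto \norm{f_0}\phi$ puts the limit projection in the form demanded by (i), with $\eps = \norm{f_0}\norm{h}$. The only genuinely delicate point is this last transfer of the lower bound; everything else is bookkeeping.
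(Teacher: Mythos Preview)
Your overall strategy---renormalise $E$ so that $\cT$ becomes Markov, then apply Theorem~\ref{thm:lasota-yorke}---is exactly the paper's approach. However, there is a genuine error in the construction of $\phi$. You fix a free ultra filter $\cU$ on $\bbN$, set $\applied{\phi}{f} = \lim_\cU \applied{\mathds{1}}{T^n f}$, and then write ``the shift-invariance of $\cU$ forces $T'\phi = \phi$''. But free ultra filters on $\bbN$ are \emph{never} shift-invariant: for the sequence $a_n = (-1)^n$ one would get $\lim_\cU a_n = \lim_\cU a_{n+1} = -\lim_\cU a_n$, forcing the limit to be $0$, whereas any ultra filter limit must lie in the set of accumulation points $\{-1,1\}$. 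Consequently there is no reason for $\applied{\phi}{Tf} = \lim_\cU \applied{\mathds{1}}{T^{n+1}f}$ to equal $\applied{\phi}{f}$, and without $T'\phi = \phi$ the operator $T$ is not Markov with respect to $\norm{\argument}_\phi$, so Theorem~\ref{thm:lasota-yorke} does not apply.

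The fix is straightforward and is precisely what the paper does: replace the ultra filter limit by a Banach limit $L$ on $\ell^\infty(\bbN;\bbR)$ and set $\norm{f}_1 \coloneqq \applied{L}{(\norm{T^n\abs{f}})_{n\in\bbN}}$. Banach limits are shift-invariant by definition, which immediately yields $\norm{Tf}_1 = \norm{f}_1$ for $f \ge 0$. Your lower estimate $\liminf_n \norm{T^n f} \ge \norm{h}$ for normalised $f \in E_+$ then gives $\norm{f}_1 \ge \norm{h}\,\norm{f}$, establishing equivalence of norms, and the rest of your argument (transfer of the lower bound, invocation of Theorem~\ref{thm:lasota-yorke}, renormalisation of $f_0$ and $\phi$) goes through unchanged. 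Note also that the paper reduces to $J=\bbN$ via Theorem~\ref{thm:discrete-to-continuous}(a) rather than Theorem~\ref{thm:discrete-to-continuous-rank-1}(a); either works here, but the former does not require knowing in advance that the limit has rank $1$.
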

\begin{proof}
	(i) $\Rightarrow$ (ii): If (i) holds, then $Pf \ge \varepsilon f_0$ for all $f \in E_+$ of norm $1$. Therefore, $\varepsilon f_0$ is a lower bound for $\cT$.
	
	(ii) $\Rightarrow$ (i): It suffices to consider the case $J=\N$; the assertion for $J = (0,\infty)$ then follows from Theorem~\ref{thm:discrete-to-continuous}(a).
	So assume that (ii) holds and that $\cT = (T^n)_{n \in \bbN}$ where $T \coloneqq T_1 \in \calL(E)$. 
	Fix an arbitrary Banach limit $L$ on $\ell^\infty(\bbN;\bbR)$ and define
		\[ \norm{f}_1 \coloneqq \langle L, (\norm{T^n\abs{f}})_{n \in \bbN} \rangle \]
	for all $f \in E$. Then $\norm{\argument}_1$ is an equivalent norm on $E$ such that $(E,\norm{\argument}_1)$ is again an AL-space. 
	Indeed, $\norm{\argument}_1$ is obviously a semi-norm on $E$ which is additive on the positive cone. 
	We clearly have $\norm{f}_1 \le \sup_{n \in \bbN} \norm{T^n} \norm{f}$ for all $f \in E$. 
	On the other hand, since $h$ is a lower bound of $\cT$, we 
	have $\norm{f}_1 \ge \norm{h} \norm{f}$ for every $f \in E$. 
	Thus, the semi-norm $\norm{\argument}_1$ is indeed equivalent to $\norm{\argument}$ and in particular, it is a norm.
	Moreover, $\norm{\argument}_1$ is even a lattice norm on $E$ and since $\norm{\argument}_1$ is additive on $E_+$, $(E,\norm{\argument}_1)$ is an AL-space as claimed. 
	
	Finally, since the Banach limit $L$ is shift invariant, it follows that $T$ is a Markov operator with respect to $\norm{\argument}_1$. 
	Hence, Theorem~\ref{thm:lasota-yorke} implies that $(T^n)$ converges strongly to the rank-$1$ projection $P\coloneqq \mathds{1}_1\otimes f_0$,
	where $f_0$ is a positive fixed point of $\cT$ such that $\norm{f_0}_1=1$  and $\mathds{1}_1$ denotes the norm functional on $(E,\norm{\argument}_1)$. 
	Note that $\norm{f_0} >0$ and that $\applied{\mathds{1}_1}{f} \geq \norm{h} >0$ for all $f\in E_+$ of norm $1$.
	Thus, replacing $f_0$ with $f_0 / \norm{f_0}$, assertion (i) follows with $\phi \coloneqq \norm{f_0} \cdot \mathds{1}_1$.
\end{proof}

As a further corollary we obtain a similar result for convergence with respect to the operator norm.

\begin{corollary} \label{cor:lasota-yorke-for-bounded-semigroups-and-convergence-in-operator-norm}
	Let $E$ be an AL-space and let $\cT = (T_t)_{t \in J}$ be a bounded semigroup on $E$ where either $J = \bbN$ or $J =(0,\infty)$. Then the following assertions are equivalent:
	\begin{enumerate}[(i)]
		\item The convergence in assertion (i) of Corollary \ref{cor:lasota-yorke-for-bounded-semigroups} holds with respect to the operator norm.
		\item There exists a vector $0 < h \in E$ such that 
		\[ \lim_{t\to\infty} \sup \bigl\{\norm{(T_tf-h)^-} : f \in E_+\text{, } \norm{f} = 1 \bigr\} =0.\]
	\end{enumerate}
\end{corollary}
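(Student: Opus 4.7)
This is the easy direction: if $T_t \to P = \phi \otimes f_0$ in operator norm, I would set $h \coloneqq \eps f_0$ for the constant $\eps$ from (i) of Corollary~\ref{cor:lasota-yorke-for-bounded-semigroups}. For any positive $f$ of norm one, the decomposition
\[
T_t f - h \;=\; (T_t - P)f + (\applied{\phi}{f} - \eps) f_0,
\]
together with positivity of the second summand, yields $(T_t f - h)^- \le ((T_t - P) f)^-$, so $\norm{(T_t f - h)^-} \le \norm{T_t - P}$. This bound is independent of $f$ and tends to zero.

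\textbf{Direction (ii)$\Rightarrow$(i): reductions.}
For the converse, my plan is to imitate the proof of Theorem~\ref{thm:lasota-yorke} with every estimate carried out \emph{uniformly} in $f$. First, Theorem~\ref{thm:discrete-to-continuous}(b) reduces the problem to $J = \bbN$: it suffices to prove operator-norm convergence of each embedded discrete semigroup $(T_{tn})_{n\in\bbN}$, and the hypothesis (ii) for $\cT$ clearly specialises, for any fixed $t>0$, to the analogous hypothesis for $(T_{tn})_{n}$. Once $J=\bbN$, I would apply the Banach-limit renorming from the proof of Corollary~\ref{cor:lasota-yorke-for-bounded-semigroups}: on $(E, \norm{\argument}_1)$, the space remains an AL-space and $T\coloneqq T_1$ becomes a Markov operator; since $\norm{\argument}$ and $\norm{\argument}_1$ are equivalent, an appropriate scalar multiple of $h$ is still a non-zero uniform lower bound with respect to $\norm{\argument}_1$. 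Thus I may henceforth assume that $\cT$ is a Markov semigroup on an AL-space possessing a non-zero uniform lower bound $h$, and must show $\norm{T^n - \mathds{1}\otimes h_*} \to 0$ for some $h_*$.

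\textbf{Main argument and obstacle.}
Let $H_u \coloneqq \bigl\{\,h' \in E_+ : \sup_{\norm{f}=1,\,f\ge 0} \norm{(T^n f - h')^-} \to 0\,\bigr\}$ be the set of uniform lower bounds. The inequality $(a - x\vee y)^- \le (a-x)^- + (a-y)^-$ used in the proof of Lemma~\ref{lem:hmax-uniform} shows $H_u$ is $\vee$-directed; $H_u$ is norm-bounded by $1$; and the KB property of AL-spaces yields a supremum $h_u \in E_+$ which, by an $\eps$-approximation using $((T^n f - h_u)^-) \le (T^n f - h')^- + (h_u - h')$, itself lies in $H_u$. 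Since $T$ is positive, $T H_u \subseteq H_u$, which gives $Th_u \le h_u$; the Markov condition and strict monotonicity of the AL-norm then force $Th_u = h_u$. The crux is $\norm{h_u} = 1$, and here lies the main obstacle: the bootstrap in the proof of Theorem~\ref{thm:lasota-yorke} chooses times $t_0, s_0$ depending on $f$, whereas I need them uniform in $f$. This is possible precisely because $h_u\in H_u$ (not just $h_u\in H$): the construction of $e_f, g_f$ and $\tilde e_{s,f}$ can then be run with a single $t_0$ and $s_0$ serving all $f$ simultaneously, producing $(1+\delta)h_u \in H_u$ for $\delta \coloneqq 1-\norm{h_u}$ and contradicting maximality. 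Once $\norm{h_u}=1$, the computation in the proof of Lemma~\ref{lem:asymptotic-domination-of-normalised-vectors-implies-convergence} gives $\norm{T^n f - h_u} \le 2\norm{(T^n f - h_u)^-}$, hence $T^n f \to h_u$ uniformly on the positive unit ball; additivity of the AL-norm on $E_+$ then upgrades this to $\norm{T^n - \mathds{1}\otimes h_u} \to 0$, and unrenorming together with Theorem~\ref{thm:discrete-to-continuous}(b) delivers (i).
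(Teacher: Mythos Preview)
Your argument is correct, and it takes a genuinely different route from the paper's own proof. The paper establishes the non-trivial direction (ii)$\Rightarrow$(i) by lifting everything to an ultra power: on $E^\cU$ the vector $h^\cU$ becomes an ordinary (non-uniform) lower bound for the lifted semigroup $\cT^\cU$, so Corollary~\ref{cor:lasota-yorke-for-bounded-semigroups} yields strong convergence of $\cT^\cU$ to a rank-$1$ projection $Q$; the authors then verify that $Q=P^\cU$ for the strong limit $P$ of $\cT$ on $E$ and invoke Theorem~\ref{thm:strong-ultra-convergence-implies-operator-norm-convergence} to descend to operator-norm convergence on $E$. Your approach instead stays on $E$ and reruns the Lasota--Yorke bootstrap with every estimate kept uniform over the positive unit sphere, after the same two reductions (discrete time via Theorem~\ref{thm:discrete-to-continuous}(b), Markov via the Banach-limit renorming). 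The paper in fact sketches, in the paragraph following its proof, an alternative route with the same reductions but appealing to a result of Bartoszek on uniformly $\varepsilon$-overlapping operators for the Markov core; your version is more self-contained still, since you supply that core step directly. The trade-off: the ultra-power proof recycles the strong-convergence result wholesale and showcases the abstract machinery of Section~\ref{section:three-abstract-convergence-theorems}, whereas your proof is elementary and makes transparent exactly where uniformity in $f$ enters. Both are clean; yours avoids any appeal to ultra products or external references. One incidental remark: your choice $h=\eps f_0$ in (i)$\Rightarrow$(ii) is actually cleaner than the paper's $h=f_0$, since for a merely bounded (non-Markov) semigroup $Pf=\langle\phi,f\rangle f_0$ need not equal $f_0$.
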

\begin{proof}
	(i) $\Rightarrow$ (ii): If (i) holds, then we may conclude as in the proof of Corollary \ref{cor:lasota-yorke-for-bounded-semigroups} that 
	the semigroup has a unique normalised, positive fixed point $f_0$ and $Pf = f_0$ for all $f \in E_+$ of norm $1$. 
	The convergence in operator norm now implies assertion (ii) with $h=f_0$.
	
	(ii) $\Rightarrow$ (i): Fix a free ultra filter $\cU$ on $\bbN$ and note that the ultra power $E^\cU$ is again an AL-space.
	Using the fact that $\norm{(f_n)^\cU} = \lim_\cU \norm{f_n}$ for each $(f_n)^\cU \in E^\cU$, one can conclude from assertion (ii) 
	that the lifted element $h^\cU \in E^\cU$ is a non-zero lower bound of the lifted semigroup $\cT^\cU \coloneqq (T_t^\cU)_{t \in J}$. 
	Hence, by Corollary \ref{cor:lasota-yorke-for-bounded-semigroups}, 
	$(T_t^\cU)$ converges strongly to a positive rank-$1$ projection $Q \in \calL(E^\cU)$ as $t \to \infty$.
	In order to apply Theorem~\ref{thm:strong-ultra-convergence-implies-operator-norm-convergence} it remains to shows that $Q$ is of the form $P^\cU$ for some $P \in \calL(E)$. 
	Since $h$ is a non-zero lower bound for $\cT$, we know from Corollary \ref{cor:lasota-yorke-for-bounded-semigroups}
	that $\cT$ converges strongly to a positive rank-$1$ projection $P \in \calL(E)$ as $t \to \infty$; we show that $Q = P^\cU$.

	The operator $Q$ is of the form $Q = \psi \otimes g_0$, 
	where $g_0 \in E^\cU$ is the unique positive fixed point of $\cT^\cU$ of norm $1$ and where $\psi \in (E^\cU)'$ is the unique positive fixed functional of $\cT^\cU$ that fulfils $\langle \psi, g_0 \rangle = 1$.
	On the other hand, we obtain that the operator $P$ is of the form $P = \varphi \otimes f_0$, where $f_0 \in E$ is the unique positive fixed point of $\cT$ of norm $1$ 
	and where $\varphi \in E'$ is the unique positive fixed functional of $\cT$ that fulfils $\langle \varphi, f_0 \rangle = 1$. 
	Let $f_0^\cU \in E^\cU$ be the lifting of $f_0$ to $E^\cU$ and let $\varphi^\cU \in (E^\cU)'$ be given by 
	$\langle \varphi^\cU, f^\cU \rangle = \lim_\cU \langle \varphi, f_n \rangle$ for all $f^\cU = (f_n)^\cU \in E^\cU$.
	Then $f_0^\cU$ is a positive fixed vector of $\cT^\cU$ of norm $1$, so $f_0^\cU = g_0$. 
	Similarly, $\varphi^\cU$  is a positive fixed functional of $\cT^\cU$ which fulfils $\langle \varphi^\cU, g_0 \rangle = \langle \varphi^\cU, f_0^\cU \rangle = 1$, so $\varphi^\cU = \psi$.
	From $f_0^\cU = g_0$ and $\varphi^\cU = \psi$ one readily derives 
	that $P^\cU = (\varphi \otimes f_0)^\cU = \psi \otimes g_0 = Q$. Now the assertion follows from Theorem~\ref{thm:strong-ultra-convergence-implies-operator-norm-convergence}.
\end{proof}

Let us briefly outline an alternative way to prove Corollary~\ref{cor:lasota-yorke-for-bounded-semigroups-and-convergence-in-operator-norm}: 
using the concept of \emph{uniformly $\varepsilon$-overlapping operators} \cite[p.\,400]{bartoszek2008} one can conclude from \cite[Thm~1]{bartoszek1990} 
that the non-trivial implication of Corollary~\ref{cor:lasota-yorke-for-bounded-semigroups-and-convergence-in-operator-norm} holds at least for time-discrete Markov semigroups. 
Then one can use the Banach limit renorming trick from the proof of Corollary~\ref{cor:ind-lower-bounds-with-norm-bounds-for-bounded-sg} 
to conclude that the assertion is even true for time-discrete semigroups which are positive and bounded. Theorem~\ref{thm:discrete-to-continuous}(b) finally allows to obtain the case $J = (0,\infty)$, as well.

It is natural to ask whether the theorem of Lasota and Yorke or its generalisation in Corollary~\ref{cor:lasota-yorke-for-bounded-semigroups} still holds on more general Banach lattices. 
Our last result in this section shows that this is indeed true but for a trivial reason:
if a bounded positive semigroup admits a non-zero lower bound, then the underlying Banach lattice is automatically an AL-space.

\begin{theorem} \label{thm:lower-bounds-only-on-al-spaces}
	Let $E$ be a Banach lattice and let $\cT = (T_t)_{t \in J}$ be a bounded semigroup of positive operators on $E$, where $J = \bbN$ or $J = (0,\infty)$. 
	If there exists a lower bound $h \not= 0$ for $\cT$, then there exists an equivalent norm $\norm{\argument}_1$ on $E$ such that $(E,\norm{\argument}_1)$ is an AL-space.
\end{theorem}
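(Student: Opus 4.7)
The plan is to exhibit a positive functional $\phi \in E'_+$ which is bounded below on $E_+$ uniformly over the unit ball, and then to define the new norm as $\norm{f}_1 \coloneqq \langle \phi, \abs{f} \rangle$; the linearity of $\phi$ will automatically force additivity of this norm on the positive cone, which is exactly the AL-space condition.

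First I would reduce to the time-discrete case by passing to the embedded semigroup $(T_n)_{n \in \bbN}$, which is still bounded and still admits $h$ as a lower bound. So from now on assume $\cT = (T^n)_{n \in \bbN}$ with $T \coloneqq T_1$, and set $M \coloneqq \sup_n \norm{T^n}$. Since $h > 0$, a standard Hahn--Banach argument in a Banach lattice gives some $\psi \in E'_+$ with $c \coloneqq \langle \psi, h \rangle > 0$ (if $\psi_0 \in E'$ satisfies $\langle \psi_0, h\rangle > 0$, then $\psi \coloneqq \psi_0^+ \in E'_+$ works since $\langle \psi_0^+, h\rangle \ge \langle \psi_0, h\rangle$).

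Now fix a Banach limit $L$ on $\ell^\infty(\bbN;\bbR)$ and define
\[
\langle \phi, f \rangle \coloneqq \bigl\langle L, (\langle \psi, T^n f \rangle)_{n \in \bbN} \bigr\rangle \qquad (f \in E).
\]
Then $\phi$ is linear and positive with $\norm{\phi} \le M \norm{\psi}$. The key estimate is that for $f \in E_+$ with $\norm{f}=1$ the inequality $T^n f \ge h - (T^n f - h)^-$ yields
\[
\langle \psi, T^n f \rangle \ge c - \norm{\psi}\, \norm{(T^n f - h)^-},
\]
and since the right-hand side converges to $c$ as $n \to \infty$ and every Banach limit dominates the $\liminf$, we obtain $\langle \phi, f \rangle \ge c$. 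Setting $\norm{f}_1 \coloneqq \langle \phi, \abs{f}\rangle$, the sandwich $c\norm{f} \le \norm{f}_1 \le M \norm{\psi}\, \norm{f}$ shows that $\norm{\argument}_1$ is an equivalent lattice norm on $E$, while additivity on the positive cone is immediate from linearity of $\phi$: for $f,g \in E_+$ one has $\norm{f+g}_1 = \langle \phi, f+g\rangle = \norm{f}_1 + \norm{g}_1$. Hence $(E,\norm{\argument}_1)$ is an AL-space.

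The main conceptual obstacle is to recognise why the Banach-limit trick from the proof of Corollary~\ref{cor:lasota-yorke-for-bounded-semigroups}, which built the new norm from $\norm{T^n\abs{f}}$, cannot simply be recycled here: that construction secretly used AL-additivity of the \emph{original} norm in order to conclude AL-additivity of the new one. The workaround is to replace the scalar quantity $\norm{T^n \abs{f}}$ by the linear quantity $\langle \psi, T^n f \rangle$, so that the averaged functional $\phi$ is linear rather than merely subadditive; then AL-additivity of $\norm{\argument}_1$ is free, and the lower bound $h$ is used only to guarantee that the new norm is equivalent to the old one via the inequality $\langle \phi, f\rangle \ge c \norm{f}$ for $f \in E_+$.
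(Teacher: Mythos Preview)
Your proof is correct and follows essentially the same approach as the paper's: pick a positive functional $\psi$ with $\langle\psi,h\rangle>0$, apply a generalised limit to the bounded scalar net $\langle\psi,T_t\abs{f}\rangle$, and observe that the resulting quantity is an equivalent lattice norm which is additive on $E_+$ by linearity of $\psi$. The only cosmetic differences are that the paper works directly on the index set $J$ using an ultrafilter containing the tail filter (so no preliminary reduction to discrete time is needed), whereas you first pass to $(T_n)_{n\in\bbN}$ and then use a Banach limit; both devices serve the same purpose here.
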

\begin{proof}
	Choose a positive functional $\varphi \in E'_+$ such that $\langle \varphi, h \rangle > 0$. Moreover, let $\cU$ be an ultra filter on $J$ which contains the filter base $\{[t,\infty)\cap J : t \in J\}$ and define
	\begin{align*}
		\norm{f}_1 \coloneqq \lim_{\cU} \langle \varphi, T_t\abs{f} \rangle.
	\end{align*}
	Obviously, $\norm{\argument}_1$ is a semi-norm on $E$. Moreover, we have $\norm{f}_1 \le \norm{\varphi} \sup_{t \in J} \norm{T_t} \, \norm{f}$ and, 
	since $h$ is a non-zero lower bound for $\cT$, we also have $\norm{f}_1 \ge \norm{f} \langle \varphi, h \rangle$. Therefore,  $\norm{\argument}_1$ is 
	indeed a norm on $E$ and equivalent to $\norm{\argument}$.
	Now, it follows from the definition of $\norm{\argument}_1$ that $(E, \norm{\argument}_1)$ is a Banach lattice. 
	Since $\norm{\argument}_1$ is obviously additive on $E_+$, $(E,\norm{\argument}_1)$ is even an AL-space.
\end{proof}

\section{Individual Lower Bounds} \label{section:individual-lower-bounds}

In this section we adapt the Lasota-Yorke theorem to a situation where the lower bound of the semigroup is allowed to depend on the individual orbit.
The precise definition of such \emph{individual lower bounds} is a follows:

\begin{definition} \label{def:ind-lower-bounds}
	Let $E$ be a Banach lattice and let $\cT = (T_t)_{t \in J}$ be a positive semigroup on $E$ where either $J = \bbN$ or $J = (0,\infty)$. Let $f \in E_+$. 
	A vector $h_f \in E_+$ is called a \emph{lower bound} for $(\cT,f)$ if $\lim_{t \to \infty} \norm{(T_tf-h_f)^-} = 0$. 
\end{definition}

This notion was introduced by Ding in \cite[p.\ 312]{ding2003}. 
He proved that if an operator $T$ on an $L^1$-space over a $\sigma$-finite measure space $(\Omega,\mu)$ is a Frobenius-Perron operator associated with a non-singular transformation of $(\Omega,\mu)$ 
and if $((T^n)_{n \in \bbN},f)$ admits a non-zero lower bound for every $0 < f \in L^1(\Omega,\mu)$, then the semigroup $(T^n)_{n \in \bbN}$ is strongly convergent. 
To make this precise, we recall that a measurable mapping $\phi\colon \Omega\to \Omega$ is called \emph{non-singular}
if $\mu(\phi^{-1}(A))>0$ implies $\mu(A)>0$ for each measurable set $A\subseteq \Omega$. Given such a $\phi$,
there exists a uniquely determined operator $T\colon L^1(\Omega,\mu) \to L^1(\Omega,\mu)$ which satisfies
\[ \int_A Tf \dx\mu = \int_{\phi^{-1}(A)} f \dx \mu \]
for all $f\in L^1(\Omega,\mu)$ and every measurable $A\subseteq \Omega$, and this operator is called the \emph{Frobenius-Perron} operator associated with $\phi$. The precise statement of Ding's theorem now reads as follows. 

\begin{theorem}[Ding \mbox{\cite[Thm~1.1]{ding2003}}] \label{thm:ding}
	Let $(\Omega,\mu)$ be a $\sigma$-finite measure space and let $\phi\colon \Omega \to \Omega$ be measurable and non-singular. Let $T \in\calL(L^1(\Omega,\mu))$ 
	be the Frobenius--Perron operator associated with $\phi$ and set $\cT \coloneqq (T^n)_{n \in \bbN}$.
	
	If for every normalised $f \in L^1(\Omega,\mu)_+$ there exists a non-zero lower bound $h_f$ for $(\cT,f)$, then $(T^n)_{n\in\N}$ converges strongly.
\end{theorem}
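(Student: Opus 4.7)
\emph{Setup.} The Frobenius--Perron operator $T$ is Markov on $L^1 := L^1(\Omega,\mu)$: positivity is built into the defining equation, and setting $A = \Omega$ there gives $\norm{Tf} = \norm{f}$ for every $f \in L^1_+$. Its Banach-space adjoint $T' \in \calL(L^\infty)$ coincides with the Koopman operator $T'g = g \circ \phi$, which is in particular a lattice homomorphism. These two properties are what my plan will exploit.

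\emph{Individual maximal lower bounds are fixed points.} For each normalised $f \in L^1_+$, the set $H_f$ of lower bounds for $(\cT,f)$ is $\vee$-stable, norm-closed, bounded in norm by $1$ (from $h \le T^n f + (T^n f - h)^-$ together with the Markov property), and $T$-invariant (using $(Tx)^- \le T x^-$ for any positive $T$). Since $L^1$ is a KB-space, $h_f^\ast := \sup H_f$ exists and belongs to $H_f$; the Markov property combined with the strict monotonicity of the AL-norm then force $T h_f^\ast = h_f^\ast$. This is exactly the argument of Lemma \ref{lem:hmax-uniform}, carried out once per orbit.

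\emph{Assembling convergence.} The main obstacle lies in this step, since without the uniform norm bound $\inf_f \norm{h_f^\ast} > 0$ Corollary \ref{cor:lasota-yorke-for-bounded-semigroups} does not apply directly. I would exploit the lattice homomorphism property of $T'$ to show that the positive fixed cone $F_+ := \{g \in L^1_+ : Tg = g\}$ is $\vee$-stable: if $Tg_i = g_i$ for $i=1,2$, then $T(g_1 \vee g_2) \ge g_1 \vee g_2$ and both sides have equal AL-norm, hence they coincide. Using $\sigma$-finiteness, exhaust $\Omega$ by sets $\Omega_k$ of finite measure, let $f_k := \mathbf{1}_{\Omega_k}/\mu(\Omega_k)$, and set $g_0 := \sum_k 2^{-k} h_{f_k}^\ast / \norm{h_{f_k}^\ast}$; then $g_0 \in F_+$. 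The key substep is to promote $g_0$ to a strictly positive density by iteratively adjoining $h^\ast$ for the indicator of any candidate null set $N$ of $g_0$: each such contribution is again in $F_+$, and a maximality argument forces no such $N$ to remain.

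With $g_0$ strictly positive, the isometric lattice isomorphism $L^1(g_0 \dx\mu) \to L^1(\mu)$, $h \mapsto g_0 h$, conjugates $T$ to a Markov operator $\tilde T$ on $L^1(g_0 \dx\mu)$ fixing $\mathbf{1}$, so $g_0 \dx \mu$ is $\phi$-invariant. The ergodic decomposition then splits $\tilde T$ into uniquely ergodic pieces on each of which the individual lower bound is, by uniqueness of the invariant density, a positive multiple of $\mathbf{1}$ and hence a genuine uniform lower bound. Theorem \ref{thm:lasota-yorke} applies on each component, and reassembling gives strong convergence of $\tilde T^n$ and thereby of $T^n$. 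The hard part is clearly the substep producing strict positivity of $g_0$, where the lattice homomorphism structure of $T'$ must interact nontrivially with the individual-lower-bound hypothesis; this is precisely where Ding's setting goes beyond the abstract framework of Corollary \ref{cor:lasota-yorke-for-bounded-semigroups}.
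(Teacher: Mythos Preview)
Your setup and the step producing the maximal fixed lower bound $h_f^\ast$ for each orbit are fine and match the paper. After that, however, your route diverges from the paper's and contains a genuine gap.

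\textbf{The gap.} Your argument hinges on producing a strictly positive fixed density $g_0$, but such a density need not exist. Take $\Omega=\{0,1\}$ with counting measure and $\phi\equiv 0$. Then $\phi$ is non-singular, $Tf=(f(0)+f(1))\,e_0$, and for every normalised $f\ge 0$ the vector $e_0$ is a non-zero lower bound for $(\cT,f)$; yet every fixed point of $T$ is a multiple of $e_0$, so there is no strictly positive invariant density. Your ``promotion'' step cannot work here: with $N=\{1\}$ one has $h^\ast_{e_1}=e_0$, which lands back in the support of $g_0$ and adds nothing. Thus the change-of-measure and ergodic-decomposition machinery never gets started. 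Even granting an ergodic component, your claim that ``the individual lower bound is \dots\ a positive multiple of $\mathbf{1}$ and hence a genuine uniform lower bound'' is a non sequitur: each $h_f^\ast$ being $c_f\mathbf{1}$ with $c_f>0$ does not yield a single $c>0$ valid for all $f$, which is what Theorem~\ref{thm:lasota-yorke} requires.

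\textbf{What the paper does instead.} The paper derives Theorem~\ref{thm:ding} from the more general Theorem~\ref{thm:ding-abstract-and-for-bounded-sg}, whose proof uses neither a strictly positive fixed point nor an ergodic decomposition. The key device is that when $T'$ is a lattice homomorphism, $T$ is \emph{interval preserving}: $T[a,b]=[Ta,Tb]$ (Proposition~\ref{prop:interval-preserving-if-the-adjoint-is-a-lattice-homomorphism}). With $h=h_f^\ast$ the maximal fixed lower bound (Lemma~\ref{lem:maximal-fixed-lower-bound}), one constructs an increasing sequence $(f_n)\subseteq[0,f]$ with $T^nf_n=h\wedge T^nf$; this is possible precisely because $h\wedge T^{n+1}f\in[T^{n+1}f_n,T^{n+1}f]=T^{n+1}[f_n,f]$. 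The limit $\hat f$ must equal $f$ by maximality of $h$, and then $\|T^nf-h\|\le\|T^nf_n-h\|+M\|f-f_n\|=\|(T^nf-h)^-\|+M\|f-f_n\|\to 0$. This argument works orbit by orbit and avoids both obstructions above. The only place the lattice-homomorphism property of $T'$ enters is exactly the interval-preserving step, not any global structural statement about the fixed space.
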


It follows from Theorem~\ref{thm:discrete-to-continuous}(a) that an analogue of Ding's theorem also holds for semigroups $\cT = (T_t)_{t \in J}$ of Frobenius-Perron operators 
if $J = (0,\infty)$. We are going to give a generalisation of Theorem~\ref{thm:ding} in Theorem~\ref{thm:ding-abstract-and-for-bounded-sg} in the next section.
In the current section we devote ourselves to the following question: Ding asked in \cite[Rem~1.3]{ding2003} whether Theorem~\ref{thm:ding} remains true 
if we replace the Frobenius-Perron operator $T$ with an arbitrary Markov operator on $L^1(\Omega,\mu)$. 
The following examples shows that the answer is ``no'', in general. 

\begin{example} \label{ex:counterexample-to-dings-question}
	Let $\ell^1 \coloneqq \ell^1(\bbN_0)$ and denote by $e_k \in \ell^1$ for $k\in\N_0$ the $k$-th canonical unit vector.
	There is a positive operator $T \in \calL(\ell^1)$ with the following properties:
	\begin{enumerate}[(a)]
		\item $T$ is a Markov operator.
		\item The fixed space of $T$ is one-dimensional and spanned by $e_0$.
		\item For every $0 < f \in \ell^1$ there is a number $c_f > 0$ such that $T^nf \ge c_fe_0$ for all $n \in \bbN$. 
		In particular, $c_f e_0$ is a lower bound for $((T^n)_{n \in \bbN},f)$.
		\item No subsequence of $(T^n)$ is weakly convergent as $n \to \infty$.
	\end{enumerate}
	Indeed, define $h \coloneqq (1, \frac{1}{2}, \frac{1}{4},\cdots,\frac{1}{2^n},\cdots) \in \ell^\infty$ and let $M \in \calL(\ell^1)$ 
	be the multiplication operator with symbol $\mathds{1}_{\bbN_0} - h$. 
	Let $S \in \calL(\ell^1)$ denote the right shift and define the operator $T$ by $Tf \coloneqq \langle h, f\rangle e_0 + SMf$ for every $f \in \ell^1$. 
	Then a brief computation shows that $T$ is a Markov operator and $e_0$ is a fixed point of $T$.
	That the fixed space is one-dimensional follows from the fact that for any $f\in \ell^1$ and all $n\in\N$, $(T^n f)_k=0$ for all $1\leq k\leq n$.
	For $0 < f \in \ell^1$ define $c_f \coloneqq \langle h, f \rangle>0$. 
	Then $Tf \ge c_f e_0$ and, since $T$ is positive and $e_0$ is a fixed point of $T$, it follows that $T^nf \ge c_f e_0$ for all $n \in \bbN$.
	In order to verify property (d), one computes that 
		\[ T^ne_1 = (1-c_n)e_0 + c_n e_{n+1} \qquad \text{for all } n \in \bbN_0, \]
	where $c_n \coloneqq \prod_{k=1}^n (1-\frac{1}{2^k})$ for each $n \in \bbN_0$. 
	Since $(c_n)$ converges to a number $c > 0$ as $n \to \infty$ we conclude that no subsequence of $(T^ne_1)_{n \in \bbN_0}$ is norm convergent; as $\ell^1$ has Schur's property, it follows that no subsequence of $(T^ne_1)_{n \in \bbN_0}$ is even weakly convergent.
\end{example}

The same construction as in the above example appears in \cite[Exa~4.1]{komornik1993} where it is used as a counterexample for another question; see also \cite[Exa~3.1.28]{emelyanov2007}.

Example~\ref{ex:counterexample-to-dings-question} shows that the existence of non-zero \emph{individual} lower bounds for a semigroup is, in general, 
not sufficient for convergence. However, the situation is different if the individual lower bounds can be chosen such that their norm is bounded below. 
This is the content of the subsequent theorem, which is also the major step towards the proof of Theorem~\ref{thm:main-result-ind-lower-bounds}.

\begin{theorem} \label{thm:ind-lower-bounds-with-norm-bounds-for-markov-sg}
	Let $E$ be an AL-space and let $\cT = (T_t)_{t \in J}$ be a Markov semigroup on $E$ where either $J = \bbN$ or $J = (0,\infty)$. Then 
	the following assertions are equivalent:
	\begin{enumerate}[(i)]
		\item $\cT$ is strongly convergent.
		\item For every normalised $f \in E_+$ there exists a lower bound $h_f$ for $(\cT,f)$ such that $\inf_f \norm{h_f} > 0$.
	\end{enumerate}
\end{theorem}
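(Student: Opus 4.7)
The plan is as follows. The direction (i) $\Rightarrow$ (ii) is immediate: if $T_t \to P$ strongly with each $T_t$ Markov, then $h_f := Pf \ge 0$ is a lower bound for $(\cT, f)$ with $\norm{h_f} = \lim_t \norm{T_t f} = 1$, so $\inf_f \norm{h_f} = 1 > 0$. I focus on the main direction (ii) $\Rightarrow$ (i), following the blueprint of the Lasota--Yorke theorem (Theorem~\ref{thm:lasota-yorke}) as the authors recommend, with arguments working uniformly for $J = \bbN$ and $J = (0, \infty)$.

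For each normalised $f \in E_+$, introduce $H_f := \{h \in E_+ : \norm{(T_t f - h)^-} \to 0\}$, the set of lower bounds for $(\cT, f)$. Mimicking Lemma~\ref{lem:hmax-uniform}, $H_f$ is norm-closed, closed under finite joins (via $(a - x \vee y)^- \le (a - x)^- + (a - y)^-$), contained in the unit ball, and $\cT$-invariant (using $\norm{(T_r f - T_s h)^-} \le \norm{(T_{r-s} f - h)^-}$ from positivity and contractivity). Since $E$ is a KB-space, $H_f$ admits a maximum $h_f^{\max}$; because $T_t h_f^{\max} \le h_f^{\max}$ has the same norm (Markov property plus strict monotonicity of the AL-norm on $E_+$), $h_f^{\max}$ is a positive fixed point of $\cT$, and $\norm{h_f^{\max}} \ge c := \inf_f \norm{h_f} > 0$.

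The key step is to show $\alpha_f := \norm{h_f^{\max}} = 1$ for every normalised $f \in E_+$. Supposing otherwise with $\delta := 1 - \alpha_f > 0$, decompose $T_t f = h_f^{\max} + u_t - v_t$ where $u_t := (T_t f - h_f^{\max})^+$ and $v_t := (T_t f - h_f^{\max})^-$; applying the norm functional $\mathds{1}$ yields $\norm{u_t} = \delta + \norm{v_t} \to \delta$ since $\norm{v_t} \to 0$. For $t_0$ large with $\norm{u_{t_0}} \ge \delta/2$, normalise $v := u_{t_0}/\norm{u_{t_0}}$ and invoke (ii) to produce $h_v \in E_+$ with $\norm{h_v} \ge c$ as a lower bound for $(\cT, v)$, chosen maximal so that $h_v$ is itself a fixed point. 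The vector $w := h_f^{\max} + \norm{u_{t_0}} h_v$ is then a positive fixed point with $\norm{w} \ge \alpha_f + c\delta/2$. The identity $T_s u_{t_0} = T_{s+t_0} f - h_f^{\max} + T_s v_{t_0}$ together with the lattice estimate $(X - Y)^- \le X^- + Y$ for $Y \ge 0$ gives $\limsup_{t\to\infty} \norm{(T_t f - w)^-} \le \norm{v_{t_0}}$, which can be made arbitrarily small by increasing $t_0$.

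The main obstacle --- and the source of the ``technically more involved'' aspect the authors allude to --- is that $w$ is only an approximate lower bound and itself depends on $t_0$. To close the contradiction I would select $t_n \to \infty$ with $\norm{v_{t_n}} < 2^{-n}$, yielding positive fixed points $w_n \ge h_f^{\max}$ in the unit ball with $\limsup_t \norm{(T_t f - w_n)^-} \le \norm{v_{t_n}}$. The Ces\`aro averages $\bar w_N := \frac{1}{N}\sum_{n=1}^N w_n$ remain positive fixed points with $\norm{\bar w_N} \ge \alpha_f + c\delta/2$ (by AL-additivity on $E_+$) and $\limsup_t \norm{(T_t f - \bar w_N)^-} \le \frac{1}{N}\sum_n \norm{v_{t_n}} \le 1/N$. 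If a norm-convergent subsequence $\bar w_{N_k} \to \tilde w$ can be extracted, then $\tilde w$ is an exact lower bound for $(\cT, f)$ with $\norm{\tilde w} \ge \alpha_f + c\delta/2$, contradicting the maximality of $h_f^{\max}$. Securing this extraction within the cone of positive fixed points --- which has no obvious compactness properties in general AL-spaces --- is the principal technical difficulty; candidate approaches are a direct compactness argument exploiting additional structure of this cone, or a detour through an ultrapower $E^{\cU}$ in the spirit of Theorem~\ref{thm:strong-ultra-convergence-implies-operator-norm-convergence} with a transfer back to $E$. Once $\norm{h_f^{\max}} = 1$ is established, Lemma~\ref{lem:asymptotic-domination-of-normalised-vectors-implies-convergence} yields $T_t f \to h_f^{\max}$ in norm for each normalised $f \in E_+$, and strong convergence on all of $E$ follows by linearity via $f = f^+ - f^-$.
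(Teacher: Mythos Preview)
Your overall strategy is sound and closely parallels the paper's, but the step you flag as ``the principal technical difficulty'' is a genuine gap, and neither of the routes you suggest (compactness of the positive fixed-point cone, or an ultrapower transfer) will close it. There is no reason for a bounded sequence of positive fixed points in a general AL-space to have a norm-convergent subsequence, and lifting to $E^\cU$ only makes the space larger.

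The missing idea---which you already have in hand from your treatment of $H_f$---is to replace Ces\`aro averages by \emph{lattice suprema}. With your sequence $(w_n)$ satisfying $w_n \ge h_f^{\max}$, $\norm{w_n} \ge \alpha_f + c\delta$, and $\limsup_t \norm{(T_tf - w_n)^-} \le \norm{v_{t_n}} < 2^{-n}$, set $W_{k,N} \coloneqq w_k \vee \cdots \vee w_N$. The inequality $(a - x\vee y)^- \le (a-x)^- + (a-y)^-$ gives $\limsup_t \norm{(T_tf - W_{k,N})^-} \le \sum_{n=k}^N 2^{-n} < 2^{1-k}$, and from $W_{k,N} \le T_tf + (T_tf - W_{k,N})^-$ one obtains $\norm{W_{k,N}} \le 1 + 2^{1-k}$. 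For fixed $k$ the sequence $(W_{k,N})_{N \ge k}$ is increasing and norm-bounded, hence convergent (KB-space) to some $W_{k,\infty}$ with $\norm{W_{k,\infty}} \ge \norm{w_k} \ge \alpha_f + c\delta$ and $\limsup_t \norm{(T_tf - W_{k,\infty})^-} \le 2^{1-k}$. Now $(W_{k,\infty})_k$ is decreasing and bounded below by $0$, hence convergent (order-continuous norm) to some $W_\infty \ge 0$ with $\norm{W_\infty} \ge \alpha_f + c\delta$ and $\limsup_t \norm{(T_tf - W_\infty)^-} = 0$. Thus $W_\infty \in H_f$ with $\norm{W_\infty} > \alpha_f = \norm{h_f^{\max}}$, contradicting maximality.

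This supremum-then-infimum device is exactly how the paper passes from approximate to exact lower bounds. The paper additionally \emph{iterates} your one-step construction (building an increasing chain $h_0 \le h_1 \le \cdots$ of fixed points by repeatedly applying the hypothesis to the successive remainders) to push the norm of the approximate lower bound all the way to $1$ before running the supremum argument. With your single-step version plus the supremum fix, however, obtaining $\norm{W_\infty} > \alpha_f$ already yields the contradiction, so the iteration is not strictly needed; in that sense your argument, once repaired, is marginally shorter than the paper's.
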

\begin{proof}
	(i) $\Rightarrow$ (ii): If (i) holds and $f \in E_+$ is of norm $1$, then $h_f \coloneqq \lim_{t \to \infty} T_t f$ is also of norm $1$ and a lower bound for $(\cT,f)$.
	
	(ii) $\Rightarrow$ (i): Assume that (ii) holds and let  $\beta \coloneqq \inf_{f} \norm{h_f} > 0$.
	For each $f \in E_+$ of norm $1$ we denote by $H_f \subseteq E_+$ the set of all lower bounds for $(\cT,f)$. 
	By the same argument as in the proof of Lemma~\ref{lem:hmax-uniform} it follows that each $H_f$ has a maximum $h_{f,\max}$ which is a fixed point of $\cT$
	and one has that $\beta \le\norm{h_{f,\max}} \le 1$. 

	Now fix a normalised vector $f\in E_+$.
	It suffices to prove that $\norm{h_{f,\max}} = 1$, because then Lemma~\ref{lem:asymptotic-domination-of-normalised-vectors-implies-convergence} implies that $(T_tf)$ converges as $t \to \infty$.
	Suppose to the contrary that $\norm{h_{f,\max}}<1$ and let $\delta \coloneqq 1-\norm{h_{f,\max}} > 0$.

	Let $\eps>0$. We construct recursively an increasing sequence $(h_n)_{n \in \bbN_0}\subseteq E_+$ of fixed points of $\cT$ with the following properties:
	\begin{enumerate}[(a)]
		\item $\norm{h_n} \ge 1 - \delta \cdot (1-\beta)^n$ for all $n \in \bbN_0$.
		\item $\limsup_{t \to \infty} \norm{(T_tf - h_n)^-} < \varepsilon$ for every $n \in \bbN_0$.
	\end{enumerate}
	We set $h_0 \coloneqq h_{f,\max}$, which clearly fulfils conditions (a) and (b). 
	Now assume that $h_n$ has already been constructed for some $n\in\N_0$ and let $\tilde \varepsilon \coloneqq \limsup_{t \to \infty}\norm{(T_tf - h_n)^-}$.
	Since $\tilde \eps < \varepsilon$ by property (b), we have $\norm{(T_{t_0}f - h_n)^-} < (\tilde \varepsilon + \varepsilon)/2$ for some $t_0 \in J$. 
	Using the notation $g_n \coloneqq (T_{t_0}f - h_n)^+$ and $e_n \coloneqq (T_{t_0}f - h_n)^-$ we have
	\begin{align*}
		\norm{g_n} = \norm{T_{t_0}f - h_n + e_n} = \norm{T_{t_0}f + e_n} - \norm{h_n} \ge 1 - \norm{h_n},
	\end{align*}
	where the second equality holds since the norm is additive on $E_+$. We note in passing that it may happen that $1 - \norm{h_n} < 0$, in which case the above inequality is trivial.
	Let 
	\[ a_n \coloneqq 
		\begin{cases} 
			0 \quad & \text{if } g_n = 0 \\ 
			\norm{g_n} \cdot h_{\frac{g_n}{\norm{g_n}},\max} \quad & \text{if } g_n > 0;		
		\end{cases}
	\]
	then we have $\norm{(T_sg_n - a_n)^-} \to 0$ as $s \to \infty$. 
	Now define $h_{n+1} \coloneqq h_n + a_n$. Clearly, $h_{n+1} \ge h_n$ and $h_{n+1}$ is a fixed point of $\cT$ since $h_n$ and $a_n$ are fixed points. 
	Moreover, using again that the norm is additive on $E_+$, we obtain
	\begin{align*}
		\norm{h_{n+1}} & = \norm{h_n} + \norm{g_n} \norm{h_{\frac{g_n}{\norm{g_n}},\max}} \ge \norm{h_n} + (1-\norm{h_n})\beta = \beta + (1-\beta)\norm{h_n} \\
		& \ge \beta + (1-\beta)\big( 1 - \delta(1-\beta)^n \big) = 1 - \delta(1-\beta)^{n+1}.
	\end{align*}
	Hence, $h_{n+1}$ has property (a).  In order to verify property (b), let $s \in J$. Then
	\begin{align*}
		T_sT_{t_0}f - h_{n+1} &= T_s\big( g_n + h_n - e_n \big) - h_{n+1} \\
		&= \big(T_sg_n - a_n\big)^+ - \big(T_sg_n - a_n\big)^- - T_se_n,
	\end{align*}
	where we used that $h_n$ is a fixed point of $T_s$. Hence,
	\begin{align*}
		\norm{(T_sT_{t_0}f - h_{n+1})^-} & \le \norm{\big(T_sg_n - a_n\big)^-} + \norm{T_se_n} \\
		& \le \norm{\big(T_sg_n - a_n\big)^-} + \frac{\tilde \varepsilon + \varepsilon}{2} \to \frac{\tilde \varepsilon + \varepsilon}{2} < \eps
	\end{align*}
	as $s \to \infty$. This implies that $\norm{(T_tf - h_{n+1})^-} < \frac{1}{4}\tilde\eps + \frac{3}{4}\eps$ for all sufficiently large $t \in J$
	and hence $h_{n+1}$ has property (b). This completes the construction of the sequence $(h_n)$.

	For every $n\in\N_0$ it follows from property (b) and from
	\[ 0 \leq h_n = h_n - T_tf + T_tf \leq (T_tf-h_n)^- + T_tf\]
	that $\norm{h_n}\leq \eps+1$.
	Hence, the increasing sequence $(h_n)_{n \in \bbN_0}$ is bounded in norm and thus, since every AL-space is a KB-space, convergent to a vector $h_{f, \eps} \in E_+$. 
	It follows from (a) and (b) that $\norm{h_{f,\varepsilon}} \ge 1$ and $\limsup_{t \to \infty}\norm{(T_tf - h_{f,\varepsilon})^-} \le \varepsilon$.
	
	Choose a sequence $(\varepsilon_n)_{n \in \bbN} \subseteq (0,\infty)$ such that $\varepsilon \coloneqq \sum_{n=1}^\infty \varepsilon_n < \infty$. 
	For every $k \in \bbN$ and every $n \ge k$ define 
	\[ h_{f,k,n} \coloneqq h_{f,\varepsilon_k} \lor h_{f, \varepsilon_{k+1}} \lor \dots \lor h_{f, \varepsilon_n}.\]
	Using that $(a - x \lor y)^- \le (a-x)^- + (a-y)^-$ for all $a,x,y \in E$ one easily checks that 
	\[ \limsup_{t \to \infty} \norm{(T_tf - h_{f,k,n})^-} \le \sum_{j=k}^n \varepsilon_j \leq \sum_{j=k}^\infty \eps_j \]
	for all $k,n \in \N$, $k \le n$, and therefore $1 \le \norm{h_{f,k,n}} \le 1 + \sum_{j=k}^\infty \varepsilon_j$.
	For each fixed $k \in \bbN$, the increasing and norm bounded sequence $(h_{f,k,n})_{n \ge k}$ 
	converges to a vector $h_{f,k,\infty}$, which fulfils $1 \le \norm{h_{f,k,\infty}} \le 1 + \sum_{j=k}^\infty \varepsilon_j$
	and $\limsup_{t \to \infty} \norm{(T_tf - h_{f,k,\infty})^-} \le \sum_{j=k}^\infty \varepsilon_j$.
	The sequence $(h_{f,k,\infty})_{k \in \bbN} \subseteq E_+$ is decreasing and 
	thus convergent to a vector $h_{f,\infty,\infty}$ such that $\norm{h_{f,\infty, \infty}} = 1$ 
	and $\limsup_{t \to \infty} \norm{(T_tf - h_{f,\infty,\infty})^-} = 0$. The latter equality shows that $h_{f, \infty, \infty} \in H_f$ which
	contradicts the assumption that $\norm{h_{f,\max}}<1$.
	We therefore conclude that $\norm{h_{f,\max}}=1$ and the assertion follows from Lemma~\ref{lem:asymptotic-domination-of-normalised-vectors-implies-convergence}.
\end{proof}

For positive semigroups which are not Markov but merely bounded we can use the same argument as in Corollary~\ref{cor:lasota-yorke-for-bounded-semigroups} 
to obtain the following result which is a slightly enhanced version of Theorem~\ref{thm:main-result-ind-lower-bounds}.

\begin{corollary} \label{cor:ind-lower-bounds-with-norm-bounds-for-bounded-sg}
	Let $E$ be an AL-space and let $\cT = (T_t)_{t \in J}$ be a bounded positive semigroup on $E$ where either $J = \bbN$ or $J = (0,\infty)$. 
	Then for every $\varepsilon > 0$ the following assertions are equivalent:
	\begin{enumerate}[(i)]
		\item The semigroup $\cT$ converges strongly to an operator $P$ such that  $\norm{Pf} \ge \varepsilon \norm{f}$ for every $f \in E_+$.
		\item For every normalised $f \in E_+$ there is a lower bound $h_f$ for $(\cT,f)$ such that $\norm{h_f} \ge \varepsilon$.
	\end{enumerate}
\end{corollary}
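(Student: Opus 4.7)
The plan is to derive the equivalence by combining the Banach limit renorming trick from the proof of Corollary~\ref{cor:lasota-yorke-for-bounded-semigroups} with the Markov case already established in Theorem~\ref{thm:ind-lower-bounds-with-norm-bounds-for-markov-sg}. The implication (i)$\Rightarrow$(ii) is immediate: setting $h_f \coloneqq Pf$ makes $(T_tf - h_f)^- \to 0$ a direct consequence of $T_tf \to Pf$, and $\norm{h_f} = \norm{Pf} \ge \varepsilon$ by hypothesis.

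For (ii)$\Rightarrow$(i), I would first reduce to $J = \bbN$ using Theorem~\ref{thm:discrete-to-continuous}(a), so that $\cT = (T^n)_{n \in \bbN}$ with $T \coloneqq T_1$. Then I would pick a Banach limit $L$ on $\ell^\infty(\bbN;\bbR)$ and define the equivalent AL-norm $\norm{f}_1 \coloneqq \langle L, (\norm{T^n \abs{f}})_{n \in \bbN}\rangle$ exactly as in the proof of Corollary~\ref{cor:lasota-yorke-for-bounded-semigroups}; this makes $T$ into a Markov operator on the AL-space $(E, \norm{\argument}_1)$. Because the two norms are equivalent, the notion of a lower bound for $(\cT, f)$ is unchanged. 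The strategy is then to verify hypothesis~(ii) of Theorem~\ref{thm:ind-lower-bounds-with-norm-bounds-for-markov-sg} with respect to $\norm{\argument}_1$: for every $g \in E_+$ with $\norm{g}_1 = 1$, one needs to produce a lower bound for $(\cT, g)$ whose $\norm{\argument}_1$-norm is bounded below by a constant independent of $g$.

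The main obstacle lies in this uniform norm estimate, and I expect it to be resolved by a double application of the hypothesis. Writing $g = \norm{g}\tilde g$ with $\norm{\tilde g} = 1$, assumption~(ii) yields $h_{\tilde g}$ with $\norm{h_{\tilde g}} \ge \varepsilon$, and $\norm{g} h_{\tilde g}$ is then a lower bound for $(\cT, g)$. Using the elementary inequality $T^n u + (T^n u - h_u)^- \ge h_u$ together with the additivity of the norm on $E_+$, one obtains $\norm{T^n u} \ge \norm{h_u} - \norm{(T^n u - h_u)^-}$ and hence $\liminf_n \norm{T^n u} \ge \varepsilon$ for every normalised $u \in E_+$; since a Banach limit lies between $\liminf$ and $\limsup$, this forces $\norm{u}_1 \ge \varepsilon$, and positive homogeneity extends this to $\norm{v}_1 \ge \varepsilon \norm{v}$ for all $v \in E_+$. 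Applied to $v = g$ and to $v = h_{\tilde g}$ respectively, one gets $\norm{g} \ge \norm{g}_1/C$ with $C \coloneqq \sup_n \norm{T^n}$ and $\norm{h_{\tilde g}}_1 \ge \varepsilon^2$, so that $\norm{\norm{g} h_{\tilde g}}_1 \ge \varepsilon^2/C$ uniformly in $g$.

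Theorem~\ref{thm:ind-lower-bounds-with-norm-bounds-for-markov-sg} then applies to $(T^n)$ on $(E, \norm{\argument}_1)$ and yields strong convergence to some operator $P$, which persists in the original equivalent norm. To extract the lower estimate in~(i), I would observe that for normalised $f \in E_+$ the convergence $T^n f \to Pf$ combined with $\norm{(T^n f - h_f)^-} \to 0$ forces $(Pf - h_f)^- = 0$, so that $Pf \ge h_f$ and therefore $\norm{Pf} \ge \norm{h_f} \ge \varepsilon$; positive homogeneity then delivers $\norm{Pf} \ge \varepsilon \norm{f}$ for all $f \in E_+$.
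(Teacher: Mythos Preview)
Your proof is correct and follows the same approach as the paper: reduce to $J=\bbN$ via Theorem~\ref{thm:discrete-to-continuous}(a), renorm with a Banach limit to make $T$ Markov, apply Theorem~\ref{thm:ind-lower-bounds-with-norm-bounds-for-markov-sg}, and read off the lower bound $\norm{Pf}\ge\varepsilon$ from $Pf\ge h_f$. The paper compresses the verification of both the norm equivalence and hypothesis~(ii) of Theorem~\ref{thm:ind-lower-bounds-with-norm-bounds-for-markov-sg} into the single phrase ``as in the proof of Corollary~\ref{cor:lasota-yorke-for-bounded-semigroups}'', whereas you spell out the estimate $\norm{v}_1\ge\varepsilon\norm{v}$ and the resulting uniform bound $\varepsilon^2/C$ explicitly; these are exactly the details needed to make the reference work, since Corollary~\ref{cor:lasota-yorke-for-bounded-semigroups} had a single global lower bound rather than individual ones.
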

\begin{proof}
	(i) $\Rightarrow$ (ii): If (i) holds and $f\in E_+$ is of norm $1$, then $Pf$ is a lower bound for $(\cT,f)$ which fulfils $\norm{Pf} \ge \varepsilon$.
	
	(ii) $\Rightarrow$ (i): It suffices to show that $\cT$ converges in the case $J=\N$, since the convergence for $J = (0,\infty)$ then follows from Theorem~\ref{thm:discrete-to-continuous}(a).
	So assume that (ii) holds and that $\cT = (T^n)_{n \in \bbN}$ where $T \coloneqq T_1 \in \calL(E)$. 
	Fix an arbitrary Banach limit $L$ on $\ell^\infty(\bbN;\bbR)$ and define
		\[ \norm{f}_1 \coloneqq \langle L, (\norm{T^n\abs{f}})_{n \in \bbN} \rangle \]
	for all $f \in E$.  As in the proof of Corollary~\ref{cor:lasota-yorke-for-bounded-semigroups} one shows that $\norm{\argument}_1$ is an equivalent norm on $E$,
	that $(E,\norm{\argument}_1)$ is also an AL-space and that $T$ is a Markov operator with respect to this norm. 
	Hence, it follows from Theorem~\ref{thm:ind-lower-bounds-with-norm-bounds-for-markov-sg} that $(T^n)$ converges strongly as $n \to \infty$.
	
	Now let $P \in \calL(E)$ be the strong limit of $\cT$, no matter whether $J = \bbN$ or $J = (0,\infty)$. 
	Then we have $\norm{Pf} \ge \norm{h_f} \ge \varepsilon$ for every normalised $f \in E_+$. This proves (i).
\end{proof}

It seems to be unclear whether one can obtain a version of Corollary~\ref{cor:ind-lower-bounds-with-norm-bounds-for-bounded-sg} for convergence in operator norm,
as we have for the Lasota-Yorke theorem in Corollary~\ref{cor:lasota-yorke-for-bounded-semigroups-and-convergence-in-operator-norm}.
Of course, we could try to employ an ultra power argument, again. However, since the limit operator of the lifted semigroup does not need to have rank~$1$, 
it is not clear whether it is given as a lifting of an operator on $E$; thus one cannot apply Theorem~\ref{thm:strong-ultra-convergence-implies-operator-norm-convergence}.

Let us derive one further corollary from the above results which deals with the convergence of asymptotically dominating semigroups.
Note that there are several results in the literature of the following type:
let $\cS$ and $\cT$ be positive semigroups on a Banach lattice such that $\cS$ dominates $\cT$ in some sense. 
If the dominating semigroup $\cS$ is strongly convergent, then it follows under appropriate technical assumptions that the dominated semigroup $\cT$ is also convergent; 
see e.g.\ \cite{emelyanov2001} and the references therein. In the following corollary we prove a contrary result: 
if the dominated semigroup $\cT$ converges and the limit operator behaves appropriately, then the dominating semigroup $\cS$ converges, too.

\begin{corollary} \label{cor:domination}
	Let $E$ be an AL-space and let $\cS = (S_t)_{t \in J}$ and $\cT = (T_t)_{t \in J}$ be two bounded positive semigroups on $E$
	where either $J = \bbN$ or $J = (0,\infty)$. Assume that $\cS$ asymptotically dominates $\cT$ in the sense that $\norm{(S_tf-T_tf)^-} \to 0$ as $t \to \infty$ for all $f \in E_+$.
	
	If $\cT$ is strongly convergent and its limit operator $P$ fulfils $\norm{Pf} \ge \varepsilon \norm{f}$ for all $f \in E_+$ and some $\varepsilon > 0$, 
	then $\cS$ is strongly convergent, too.
\end{corollary}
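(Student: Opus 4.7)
The plan is to deduce convergence of $\cS$ directly from Corollary~\ref{cor:ind-lower-bounds-with-norm-bounds-for-bounded-sg} by producing individual lower bounds for $\cS$ whose norms are uniformly bounded away from zero. The natural candidate for the lower bound of an orbit $(\cS,f)$ with $f \in E_+$ normalised is simply $h_f \coloneqq Pf$, since then $\norm{h_f} = \norm{Pf} \ge \varepsilon$ holds by hypothesis, which immediately gives the required uniform norm bound.

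The only thing to check is therefore that $\norm{(S_t f - Pf)^-} \to 0$ as $t \to \infty$ for every normalised $f \in E_+$. For this I would use the elementary lattice inequality $(a+b)^- \le a^- + b^-$ valid in any vector lattice (which follows from $a + b \ge -a^- - b^-$) applied to the decomposition
\begin{align*}
	S_t f - Pf = (S_t f - T_t f) + (T_t f - Pf).
\end{align*}
This yields $\norm{(S_t f - Pf)^-} \le \norm{(S_t f - T_t f)^-} + \norm{T_t f - Pf}$, and both terms tend to $0$: the first by the asymptotic domination hypothesis, the second by strong convergence of $\cT$ to $P$. Thus $h_f = Pf$ is indeed a lower bound for $(\cS,f)$ in the sense of Definition~\ref{def:ind-lower-bounds}.

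With the individual lower bounds $h_f = Pf$ satisfying $\inf_{\norm{f}=1,\, f \ge 0} \norm{h_f} \ge \varepsilon > 0$ in hand, the implication ``(ii) $\Rightarrow$ (i)'' of Corollary~\ref{cor:ind-lower-bounds-with-norm-bounds-for-bounded-sg} applied to the bounded positive semigroup $\cS$ yields its strong convergence. Since each step is essentially a short lattice-theoretic computation combined with a direct invocation of the already established machinery, I do not anticipate a serious obstacle; the only point requiring a little care is the lattice inequality for the negative part, which is however entirely standard.
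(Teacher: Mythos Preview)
Your proof is correct and follows essentially the same route as the paper: the paper observes that $\cT$ satisfies condition (i) of Corollary~\ref{cor:ind-lower-bounds-with-norm-bounds-for-bounded-sg}, hence (ii), and that the asymptotic domination ``obviously'' transfers the individual lower bounds from $\cT$ to $\cS$, which is precisely what you make explicit via the choice $h_f = Pf$ and the inequality $(a+b)^- \le a^- + b^-$. Your version simply spells out the step the paper leaves to the reader.
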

\begin{proof}
	Since $\cT$ fulfils assertion (i) of Corollary~\ref{cor:ind-lower-bounds-with-norm-bounds-for-bounded-sg},
	it also fulfils the equivalent assertion (ii). This obviously implies that $\cS$ fulfils assertion (ii) of the corollary, too, 
	and hence it fulfils the equivalent assertion (i).
\end{proof}

\section{Ding's Theorem Revisited} \label{section:individual-lower-bounds-without-a-norm-bound}

In the previous section we derived strong convergence of a semigroup under the assumption that it admits, in some sense, individual lower bounds. 
According to Example~\ref{ex:counterexample-to-dings-question} this is only possible under an additional assumption. 
In Theorem~\ref{thm:ind-lower-bounds-with-norm-bounds-for-markov-sg} we assumed in addition that the individual lower bounds are bounded below in norm. 
Ding, on the other hand, required in Theorem \ref{thm:ding} that the semigroup consists of Frobenius-Perron operators. 
In the next theorem, we replace this condition with the slightly more general requirement that the adjoint of each operator is a
lattice homomorphism and we only assume the semigroup to be bounded instead of Markov.

\begin{theorem} \label{thm:ding-abstract-and-for-bounded-sg}
	Let $E$ be an AL-space and let $\cT = (T_t)_{t \in J}$ be a bounded positive semigroup on $E$ where either $J = \bbN$ or $J = (0,\infty)$. 
	Suppose that for every $t \in J$ the adjoint operator $T_t' \in \calL(E')$ is a lattice homomorphism. Then the following assertions are equivalent:
	\begin{enumerate}[(i)]
		\item $\cT$ is strongly convergent and the limit operator $P \in \calL(E)$ fulfils $Pf > 0$ for every $f > 0$.
		\item For every $0<f \in E$ there exists a non-zero lower bound $h_f$ for $(\cT,f)$.
	\end{enumerate}
\end{theorem}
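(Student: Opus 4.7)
The implication (i) $\Rightarrow$ (ii) is immediate: taking $h_f := Pf$ for each $0 < f \in E$, the hypothesis on $P$ gives $h_f > 0$, and strong convergence gives $\norm{(T_tf - h_f)^-} \le \norm{T_tf - h_f} \to 0$. For the converse (ii) $\Rightarrow$ (i), my plan is to adapt the argument of Theorem~\ref{thm:ind-lower-bounds-with-norm-bounds-for-markov-sg} while using the order-theoretic hypothesis on $T_t'$ in place of the missing uniform norm bound on the individual lower bounds. First I would reduce to the discrete-time Markov setting: Theorem~\ref{thm:discrete-to-continuous}(a) handles the passage from $J = (0,\infty)$ to $J = \bbN$, and the Banach limit renorming trick from the proof of Corollary~\ref{cor:lasota-yorke-for-bounded-semigroups} produces an equivalent AL-norm $\norm{\argument}_1$ under which $T := T_1$ becomes Markov. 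Crucially, the property ``$T'$ is a lattice homomorphism'' is purely order-theoretic on $E'$ and hence preserved under this renorming.

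Next, for every normalised $f \in E_+$ I would introduce the maximal lower bound $h_{f,\max}$ for $(\cT, f)$. Its existence, the fact that it is a fixed vector of $T$, and the estimate $\norm{h_{f,\max}} \le 1$ follow by the same argument as in Lemma~\ref{lem:hmax-uniform}: the set of lower bounds for $(\cT, f)$ is upward directed under $\vee$ and norm bounded by $1$, so its supremum exists (AL-spaces are KB-spaces) and, by strict monotonicity of the norm on the positive cone combined with the Markov property, must be a fixed vector. Lemma~\ref{lem:asymptotic-domination-of-normalised-vectors-implies-convergence} then reduces the claim to showing $\norm{h_{f,\max}} = 1$ for every such $f$; once this is established, strong convergence of $\cT$ follows and the limit operator automatically satisfies $Pf = h_{f,\max} > 0$.

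The heart of the argument, and the main obstacle, is the proof that $\norm{h_{f,\max}} = 1$. My plan is to argue by contradiction: suppose $r := 1 - \norm{h_{f,\max}} > 0$. Since $\norm{(T^n f - h_{f,\max})^-} \to 0$, additivity of the AL-norm on the positive cone implies that the residual $(T^n f - h_{f,\max})^+$ has norm tending to $r$. The key use of the hypothesis is that ``$T'$ is a lattice homomorphism'' is equivalent to $T$ being \emph{interval preserving}, i.e.\ $T^n[0, f] = [0, T^n f]$ for every $n$. This permits me to write $(T^n f - h_{f,\max})^+ = T^n w_n$ for some $0 \le w_n \le f$, and the Markov property forces $\norm{w_n} = \norm{T^n w_n} \to r$. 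Because AL-spaces have order continuous norm, the interval $[0, f]$ is weakly compact, so along an appropriate subnet (or along a free ultrafilter on $\bbN$) I obtain a weak limit $0 < w \le f$ with $\langle \mathds{1}, w \rangle = r$. Hypothesis (ii) then supplies a non-zero lower bound $h_w$ for $(\cT, w)$, and the aim is to combine $h_{f,\max}$ with an asymptotic contribution extracted from $h_w$ — in the iterative spirit of the proof of Theorem~\ref{thm:ind-lower-bounds-with-norm-bounds-for-markov-sg} — to produce a lower bound for $f$ strictly exceeding $h_{f,\max}$, contradicting maximality. The genuine technical subtlety is that $T^n w_n$ differs from $T^n w$, so the decomposition afforded by $w$ is only asymptotic; resolving this likely requires either a diagonal extraction coupled with uniform continuity on order intervals or an ultrapower reformulation analogous to the one used in Corollary~\ref{cor:lasota-yorke-for-bounded-semigroups-and-convergence-in-operator-norm}.
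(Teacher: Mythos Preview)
Your proposal has two genuine gaps.

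\textbf{The renorming step does not work here.} The Banach limit trick from Corollary~\ref{cor:lasota-yorke-for-bounded-semigroups} yields an \emph{equivalent} norm only because a uniform lower bound gives $\norm{f}_1 \ge \norm{h}\,\norm{f}$; in Corollary~\ref{cor:ind-lower-bounds-with-norm-bounds-for-bounded-sg} the same role is played by $\inf_f \norm{h_f} > 0$. In the present theorem no such uniform estimate is available, and indeed the example immediately following Theorem~\ref{thm:ding-abstract-and-for-bounded-sg} in the paper (the rank-one projection $T = h \otimes e_1$ on $\ell^1$ with $\norm{Pe_k} = 2^{-k}$) shows that $\norm{\argument}_1$ can fail to be equivalent to the original norm. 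Hence $(E,\norm{\argument}_1)$ need not be complete, $T$ need not be Markov on an AL-space, and the whole strategy of proving $\norm{h_{f,\max}} = 1$ and invoking Lemma~\ref{lem:asymptotic-domination-of-normalised-vectors-implies-convergence} collapses.

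\textbf{The weak-limit step is not merely a technicality.} Even granting a Markov setting, from $w_n \to w$ weakly you get no control over $T^n w_n - T^n w$, so the lower bound $h_w$ for $(\cT,w)$ says nothing about the sequence $T^n w_n = (T^n f - h_{f,\max})^+$. Lifting to an ultrapower does not help either: hypothesis~(ii) gives individual lower bounds only for vectors in $E$, not for elements $(w_n)^\cU$ of $E^\cU$, and there is no reason such lower bounds should exist there.

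The paper avoids both problems. It does \emph{not} renorm: using Lemma~\ref{lem:maximal-fixed-lower-bound} it obtains, for the bounded semigroup directly, a maximal lower bound $h$ for $(\cT,f)$ that is a fixed point of $\cT$. Interval preservation is then used not to extract a weak limit but to build an \emph{increasing} sequence $f_n \in [0,f]$ with $T^n f_n = h \wedge T^n f$: since $h \wedge T^{n+1}f \ge T(h \wedge T^n f) = T^{n+1} f_n$, one has $h \wedge T^{n+1}f \in T^{n+1}[f_n,f]$, so $f_{n+1}$ can be chosen in $[f_n,f]$. Monotonicity (KB-space) gives $\hat f = \lim f_n \le f$; one checks that $h$ is a lower bound for $(\cT,\hat f)$, so if $\hat f < f$ then a fixed-point lower bound $h_1 > 0$ for $(\cT, f - \hat f)$ from Lemma~\ref{lem:maximal-fixed-lower-bound} yields the fixed-point lower bound $h + h_1$ for $(\cT,f)$, contradicting maximality of $h$. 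Thus $\hat f = f$, and a direct estimate then shows $T^n f \to h$. No weak compactness, no renorming, and the argument never asserts $\norm{h} = \norm{f}$.
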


Let us briefly explain why Theorem~\ref{thm:ding-abstract-and-for-bounded-sg} generalises Theorem~\ref{thm:ding}.
Let $(\Omega,\mu)$ be a $\sigma$-finite measure space and let $\phi\colon \Omega \to \Omega$ be a measurable and non-singular mapping
with corresponding Frobenius-Perron operator $T \in \calL(L^1(\Omega,\mu))$. 
Since the measure space is $\sigma$-finite, the dual space of $L^1(\Omega,\mu)$ is given by $L^\infty(\Omega,\mu)$ and the adjoint $T'$ 
is the Koopman operator associated with $\phi$. 
Hence, every operator $(T^n)'=(T')^n$ is a lattice homomorphism and thus Theorem \ref{thm:ding} is a special case of Theorem~\ref{thm:ding-abstract-and-for-bounded-sg}.

On the other hand one can show, under appropriate assumptions on the measure space, that a Markov operator $T$ on $L^1(\Omega,\mu)$ whose adjoint is a lattice homomorphism is automatically a Frobenius-Perron operator.
Hence, the major novelty about Theorem~\ref{thm:ding-abstract-and-for-bounded-sg} is the fact that the semigroup $\cT$ is allowed to be merely bounded instead of Markov.

For the proof of Theorem~\ref{thm:ding-abstract-and-for-bounded-sg} we employ the main idea from the proof of \cite[Thm~1.1]{ding2003}, 
but we replace the usage of the explicit form of the operators with a general observation that we recall in Proposition \ref{prop:interval-preserving-if-the-adjoint-is-a-lattice-homomorphism}.
In a Banach lattice $E$ we denote the \emph{order interval} between any $f,g \in E$ by $[f,g] \coloneqq \{x \in E: f \le x \le g\}$.

\begin{proposition} \label{prop:interval-preserving-if-the-adjoint-is-a-lattice-homomorphism}
	Let $E$ be a Banach lattice with order continuous norm and let $T\in \calL(E)$ be such that the adjoint $T' \in \calL(E')$ is a lattice homomorphism. 
	Then we have $T[f,g] = [Tf,Tg]$ for all $f,g \in E$ with $f \le g$.
\end{proposition}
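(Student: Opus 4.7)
The plan is to reduce to the case $f = 0$, dispatch the easy inclusion by positivity, and prove the reverse inclusion by a Hahn--Banach separation argument in which the lattice homomorphism property of $T'$ is precisely what forbids any separating functional.

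First I would note that $T \ge 0$: since $T'$ is a lattice homomorphism it is in particular positive, and the adjoint of a bounded operator is positive if and only if the operator itself is positive. Using $[f,g] = f + [0, g-f]$ together with linearity of $T$, it therefore suffices to prove $T[0, u] = [0, Tu]$ for every $u \in E_+$. The inclusion $T[0, u] \subseteq [0, Tu]$ is immediate from $T \ge 0$.

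For the reverse inclusion, fix $u \in E_+$ and take $y \in [0, Tu]$. Since the norm on $E$ is order continuous, the order interval $[0, u]$ is weakly compact by a classical characterisation of order continuity (see e.g.\ \cite{meyer1991}). Consequently $T[0,u]$, being the image of a weakly compact convex set under the weakly continuous linear map $T$, is weakly compact (hence weakly closed) and convex. If $y \notin T[0, u]$, Hahn--Banach produces a functional $\varphi \in E'$ with
\[
\langle \varphi, y \rangle \;>\; \sup_{x \in [0,u]} \langle \varphi, Tx \rangle \;=\; \sup_{x \in [0, u]} \langle T'\varphi, x\rangle.
\]
By the Riesz--Kantorovich formula, the right-hand side equals $\langle (T'\varphi)^+, u\rangle$. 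On the other hand, $\varphi \le \varphi^+$ together with $0 \le y \le Tu$ gives
\[
\langle \varphi, y \rangle \;\le\; \langle \varphi^+, y \rangle \;\le\; \langle \varphi^+, Tu \rangle \;=\; \langle T'\varphi^+, u \rangle,
\]
and the lattice homomorphism hypothesis on $T'$ yields $T'\varphi^+ = (T'\varphi)^+$. Combining these, $\langle \varphi, y \rangle \le \langle (T'\varphi)^+, u \rangle$, contradicting the strict separation. Hence $y \in T[0, u]$, and the reverse inclusion is established.

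The only ingredient beyond formal duality is the weak compactness of order intervals in an order continuous Banach lattice; without it the separation step would have no weakly closed convex target set to apply Hahn--Banach to. Everything else hinges on the single identity $T'\varphi^+ = (T'\varphi)^+$, which is exactly the lattice homomorphism assumption and the precise reason why no separating functional can exist.
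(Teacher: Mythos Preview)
Your proof is correct. The reduction to $[0,u]$, the weak compactness of order intervals under order continuous norm, and the Hahn--Banach separation combined with the Riesz--Kantorovich identity $\sup_{0\le x\le u}\langle\psi,x\rangle=\langle\psi^+,u\rangle$ all work as stated; the key identity $T'\varphi^+=(T'\varphi)^+$ is exactly where the lattice homomorphism hypothesis enters, and the contradiction closes cleanly.

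As for comparison: the paper does not give its own argument but simply refers to \cite[Exercise~1.4.E2]{meyer1991}, so you have in fact supplied what the paper omits. Your approach via weak compactness and separation is one of the standard routes to this result (interval-preserving operators being characterised by their adjoints being lattice homomorphisms); an alternative, more algebraic route avoids Hahn--Banach by working directly with the Riesz decomposition property on the range side, but your argument is self-contained, transparent about where order continuity is used, and perfectly adequate here.
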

\begin{proof}
	This result can be found in \cite[Exercise~1.4.E2]{meyer1991}.
\end{proof}

Since we do not require the semigroup in Theorem \ref{thm:ding-abstract-and-for-bounded-sg} to be Markov, we also need the following adjusted version of Lemma \ref{lem:hmax-uniform}.

\begin{lemma} \label{lem:maximal-fixed-lower-bound}
	Let $\cT=(T_t)_{t\in J}$ be a bounded positive semigroup on an AL-space $E$ where either $J=\N$ or $J=(0,\infty)$ and suppose that for every  $0 < g\in E$ there exists
	a non-zero lower bound for $(\cT,g)$. 
	
	Let $0 < f\in E$ and denote by $H_{\mathrm{fix}}\subseteq E_+$ the set of all lower bounds for $(\cT,f)$ that are fixed points of $\cT$.
	Then $H_{\mathrm{fix}}$ has a non-zero maximum.
\end{lemma}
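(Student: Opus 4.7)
The plan is to build the required maximum in stages: first construct a maximum $h_{\max}$ of the larger set $H\coloneqq\{h\in E_+ : h\text{ is a lower bound for }(\cT,f)\}$, then replace $h_{\max}$ by an asymptotic orbit $h^*\coloneqq \lim_s T_s h_{\max}$ in order to gain the fixed-point property, and finally take the supremum of $H_{\mathrm{fix}}$ itself after showing that this set is upward directed.

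First I would verify, in parallel to Lemma~\ref{lem:hmax-uniform}, that $H$ contains a non-zero element (by the hypothesis applied to $g\coloneqq f$), is closed under finite suprema via $(a-x\vee y)^-\le (a-x)^-+(a-y)^-$, is norm-closed via $(a+b)^-\le a^-+b^-$, and is norm-bounded: setting $M\coloneqq\sup_{t\in J}\norm{T_t}$, the inequality $0\le h\le (T_tf-h)^-+T_tf$ together with additivity of the norm on $E_+$ yields $\norm{h}\le M\norm{f}$. Moreover $H$ is $\cT$-invariant, since for $h\in H$ and $s\ge t$ the positivity of $T_t$ gives $(T_sf-T_th)^-\le T_t(T_{s-t}f-h)^-$, whose norm tends to $0$. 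Because every AL-space is a KB-space, the upward-directed, norm-bounded family $H$ thus has a supremum $h_{\max}\in H$, and $h_{\max}>0$.

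The main obstacle is that, without the Markov hypothesis, one cannot conclude $T_th_{\max}=h_{\max}$ as in Lemma~\ref{lem:hmax-uniform}, whose argument relied on $\norm{T_th}=\norm{h}$ combined with strict monotonicity of the AL-norm. Instead, I would exploit the $\cT$-invariance of $H$: since $T_th_{\max}\in H$ we have $T_th_{\max}\le h_{\max}$, so $(T_sh_{\max})_s$ is a decreasing and norm-bounded net in $E_+$, which by the KB property converges in norm to some $h^*\in E_+$. Standard continuity and semigroup arguments show that $h^*$ is a fixed point of $\cT$, and norm-closedness of $H$ gives $h^*\in H$. For non-vanishing I would argue by contradiction: if $h^*=0$, then $T_sh_{\max}\to 0$ in norm, but applying the hypothesis to $g\coloneqq h_{\max}>0$ yields a non-zero lower bound $k$ for $(\cT,h_{\max})$ with $\norm{(T_sh_{\max}-k)^-}\to 0$; on the other hand $(T_sh_{\max}-k)^-\to k$ in norm, forcing $k=0$, a contradiction. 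Hence $h^*\in H_{\mathrm{fix}}\setminus\{0\}$.

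Finally, to promote $h^*$ to the maximum of $H_{\mathrm{fix}}$, I would show that $H_{\mathrm{fix}}$ is upward directed. For $h_1,h_2\in H_{\mathrm{fix}}$ the join $h_1\vee h_2$ lies in $H$ and is super-fixed ($T_t(h_1\vee h_2)\ge h_1\vee h_2$), so $(T_s(h_1\vee h_2))_s$ is an increasing, norm-bounded net contained in $H$; its KB-limit $\hat h$ is simultaneously a fixed point (by continuity) and a lower bound (by closedness of $H$), so $\hat h\in H_{\mathrm{fix}}$ with $\hat h\ge h_1,h_2$. The increasing net $(h)_{h\in H_{\mathrm{fix}}}$ is then norm-bounded as a subset of $H$, and the KB property provides a norm-limit supremum $h_{\mathrm{fix},\max}$, which is again a fixed point (by continuity of each $T_u$), lies in $H$ (by closedness), and dominates $h^*>0$. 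This $h_{\mathrm{fix},\max}$ is the desired non-zero maximum of $H_{\mathrm{fix}}$.
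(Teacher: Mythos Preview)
Your proof is correct and follows essentially the same route as the paper: construct $h_{\max}=\sup H$, push it down along the orbit to a non-zero fixed lower bound $h^*$ (the paper's $h_0$), and then take a supremum within the fixed-point lower bounds. The only cosmetic difference is in the last step: the paper works with the intermediate set $\tilde H\coloneqq\{h\in H: T_th\ge h\text{ for all }t\}$ of super-fixed lower bounds, which is closed under $\vee$ outright, takes $\tilde h_{\max}=\sup\tilde H$, and then observes that $\tilde h_{\max}$ is both sub-fixed (by $\cT$-invariance of $\tilde H$) and super-fixed, hence fixed; since $H_{\mathrm{fix}}\subseteq\tilde H$, this $\tilde h_{\max}$ is automatically the maximum of $H_{\mathrm{fix}}$. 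Your variant instead shows $H_{\mathrm{fix}}$ is upward directed by sending each join $h_1\vee h_2$ along its increasing orbit to a fixed limit, which costs an extra limit per pair but is equally valid.
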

\begin{proof}
	Let $H\subseteq E_+$ denote the set of all lower bounds for $(\cT,f)$.
	As in the proof of Lemma~\ref{lem:hmax-uniform} on can show that $H$ is closed and $\cT$-invariant and that $h_1\vee h_2 \in H$ for all $h_1,h_2 \in H$.
	Moreover, since $\norm{h} \leq \sup_{t \in J} \norm{T_t}\norm{f}$ for all $h\in H$, it follows that there exists $h_{\max} = \sup H \in H$. For every $t \in J$ we have $T_t h_{\max} \in H$ and hence $T_t h_{\max} \le h_{\max}$. The net 
	$(T_th_{\max})_{t \in J}$ is thus decreasing and it converges to an element $h_0 \in E_+$ which is trivially a lower bound for $(\cT,f)$. 
	On the other hand, there exists a non-zero lower bound for $(\cT,h_{\max})$, which implies that $h_0 \not= 0$. 
	Hence, $h_0$ is a non-zero lower bound for $(\cT,f)$ and clearly a fixed point of $\cT$, i.e.\ $0 \not= h_0 \in H_{\mathrm{fix}}$.
	
	Let $\tilde H \coloneqq \{ h \in H : T_t h \geq h \text{ for all } t\in J\}$ denote the set of all lower bounds for $(\cT,f)$ which are  super fixed points  of $\cT$.
	Then $h_0 \in \tilde H$, $\tilde H$ is closed and $\cT$-invariant and again $h_1\vee h_2 \in \tilde H$ for all $h_1,h_2 \in \tilde H$. 
	Thus, the increasing and norm bounded net $(\tilde h)_{\tilde h \in \tilde H}$ converges to its supremum $0\neq \tilde h_{\max} \in \tilde H$.
	Since $\tilde H$ is invariant with respect to $\cT$, we have $T_t \tilde h_{\max} \le \tilde h_{\max}$ for all $t \in J$. 
	On the other hand, $\tilde h_{\max}$ is a super fixed point of $\cT$ and therefore $T_t \tilde h_{\max} \geq \tilde h_{\max}$.
	
	Thus, $\tilde h_{\max} \in H_{\mathrm{fix}}$ and since $H_{\mathrm{fix}} \subseteq \tilde H$, $\tilde h_{\max}$ is also the maximum of $H_{\mathrm{fix}}$.
\end{proof}

We can now prove Theorem~\ref{thm:ding-abstract-and-for-bounded-sg}.

\begin{proof}[Proof of Theorem~\ref{thm:ding-abstract-and-for-bounded-sg}]
	(i) $\Rightarrow$ (ii): If (i) holds and $0 < f \in E$, then $Pf$ is a non-zero lower bound for $(\cT,f)$.
	
	(ii) $\Rightarrow$ (i): It suffices to consider the case $J=\N$; the assertion for $J = (0,\infty)$ then follows from Theorem~\ref{thm:discrete-to-continuous}(a).
	So assume that (ii) holds and that $\cT = (T^n)_{n \in \bbN}$ for $T \coloneqq T_1 \in \calL(E)$. 
	Let $0 < f \in E$.  According to Lemma~\ref{lem:maximal-fixed-lower-bound} there exists a 
	maximum $h\neq 0$ of the set of all lower bounds for $(\cT,f)$ that are also fixed points of $\cT$.
	
	We are going to show that $T^nf \to h$ as $n \to \infty$. To this end, we first construct an increasing sequence $(f_n)_{n \in \bbN_0}$ with the following properties:
	\begin{enumerate}[(a)]
		\item $0 \le f_n \le f$ for each $n \in \bbN_0$.
		\item $T^nf_n = h \land T^nf$ for each $n \in \bbN_0$.
	\end{enumerate}
	Define $f_0 \coloneqq f \land h$, which clearly has the properties (a) and (b). 
	Now, assume that $f_n$ has already been defined for some $n\in\N_0$ and fulfils (a) and (b). 
	Then 
	\[ h\wedge T^{n+1}f = Th\wedge T(T^n f) \geq T(h\wedge T^n f) = T^{n+1}f_n \]
	and therefore $h\wedge T^{n+1}f \in [T^{n+1}f_n, T^{n+1}f] = T^{n+1}[f_n,f]$, where the equality of the two sets follows from Proposition~\ref{prop:interval-preserving-if-the-adjoint-is-a-lattice-homomorphism}.
	Thus, we find a vector $f_{n+1} \in [f_n,f]$ that satisfies conditions (a) and (b). This completes the construction of the sequence $(f_n)$.
	
	Since every AL-space is a KB-space, the increasing and norm bounded sequence $(f_n)$ converges to a vector $\hat f \in [0,f]$
	and for any $n\in\N$ we have
	\[ 0\leq (T^n \hat f - h)^- \leq (T^n f_n - h)^- = (T^nf\wedge h-h)^- = (T^nf-h)^-.\]
	Since $h$ is a lower bound for $(\cT,f)$, this implies that $h$ is also a lower bound for $(\cT,\hat f)$. 
	
	We show now that $\hat f=f$, so 
	suppose to the contrary that $\hat f < f$.
	Then, by Lemma~\ref{lem:maximal-fixed-lower-bound}, there exists a lower bound $h_1 > 0$ for $(\cT,f - \hat f)$ which is a fixed point of $\cT$.
	Hence, $h + h_1$ is a fixed point of $\cT$ and also a lower bound for $(\cT, \hat f+(f-\hat f)) = (\cT, f)$. This contradicts the maximality of $h$ and it thus follows that  $\hat f = f$.
	
	Now let $\varepsilon > 0$ and let $M \coloneqq \sup_{n \in \bbN} \norm{T^n}$. 
	For all sufficiently large $n$, $n \ge n_0$ say, we have $\norm{f - f_n} \le \varepsilon$ and $\norm{(T^nf-h)^-} \le \varepsilon$. Hence we obtain for all such $n$ that
	\begin{align*}
		\norm{T^nf - h} &\le \norm{T^nf_n - h} + M \varepsilon = \norm{h \land T^n f - h} + M \varepsilon \\ 
		&= \norm{(T^nf - h)^-} + M\varepsilon \le (M+1)\varepsilon.
	\end{align*}
	This proves that $T^n f \to h > 0$ as $n \to \infty$. Thus, assertion (i) holds.
\end{proof}

Let us briefly compare Theorem~\ref{thm:ding-abstract-and-for-bounded-sg} with Corollary~\ref{cor:ind-lower-bounds-with-norm-bounds-for-bounded-sg}: 
condition (ii) of the corollary contains the additional assumption that $\norm{h_f} \ge \varepsilon$ for every normalised $f \ge 0$; 
on the other hand, the limit operator in assertion (i) thus has to fulfil $\norm{Pf} \ge \varepsilon \norm{f}$ for every $f \ge 0$. 
In Theorem~\ref{thm:ding-abstract-and-for-bounded-sg} none of these additional properties occurs
and one might ask whether they are automatically fulfilled if the equivalent assertions (i) and (ii) hold. The following examples shows that the answer is ``no''.

\begin{example}
	Let $\ell^1 \coloneqq \ell^1(\N)$ and denote by $e_k \in \ell^1$ for $k\in \N$ the $k$-th canonical unit vector.
	Define $h \coloneqq (1,\frac{1}{2},\frac{1}{4}, \dots, \frac{1}{2^n},\dots) \in \ell^\infty$ and let $T\coloneqq h\otimes e_1 \in \cL(\ell^1)$.
	This operator has the following properties:
	\begin{enumerate}[(a)]
	\item The adjoint $T' = e_1\otimes h \in \cL(\ell^\infty)$ is a lattice homomorphism and $T$ is a projection; thus the semigroup $\cT = (T^n)_{n\in\N}=(T)_{n\in\N}$ 
	fulfils the assumptions of Theorem~\ref{thm:ding-abstract-and-for-bounded-sg}.
	\item Since $T$ is a projection, $\cT$ clearly converges strongly to the operator $P\coloneqq T$ and thus condition~(i) of Theorem~\ref{thm:ding-abstract-and-for-bounded-sg} is fulfilled.
	\item Since $\norm{Pe_k} = 2^{-k}$ for each $k\in\N$, there is no $\eps>0$ such that $\norm{Pf} \geq \eps \norm{f}$ for each $0\le f\in \ell^1$.
	\end{enumerate}
\end{example}

It is a natural question what happens if the assumption in Theorem~\ref{thm:ding-abstract-and-for-bounded-sg} that the adjoints $T_t'$ be lattice homomorphisms is replaced with the assumption that the operators $T_t$ themselves be lattice homomorphisms. The following theorem shows that such a semigroup can almost never have individual lower bounds.

\begin{theorem} \label{thm:ind-lower-bounds-for-sg-of-lattice-homomorphisms}
	Let $E$ be an AL-space and let $\cT = (T_t)_{t \in J}$ be a bounded positive semigroup on $E$ where either $J = \bbN$ or $J = (0,\infty)$. 
	Assume that every operator $T_t$ is a lattice homomorphism. Then the following assertions are equivalent:
	\begin{enumerate}[(i)]
		\item $\cT$ converges strongly to an operator $P$ that fulfils $Pf>0$ for every $f > 0$.
		\item We have $T_t = \id_E$ for all $t \in J$.
		\item For every $0<f \in E$ there exists a non-zero lower bound $h_f$ for $(\cT,f)$.
	\end{enumerate}
\end{theorem}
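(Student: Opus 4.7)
The plan is to close the cycle of implications via the obvious steps (ii) $\Rightarrow$ (i), (ii) $\Rightarrow$ (iii) and (i) $\Rightarrow$ (iii) --- the last one is immediate because $Pf$ serves as a lower bound $h_f$ --- and then concentrate on the non-trivial implication (iii) $\Rightarrow$ (ii). Writing $F \coloneqq \{h \in E_+ : T_t h = h \text{ for all } t \in J\}$ for the positive fixed cone of $\cT$, I aim to show $E_+ \subseteq F$; by linearity this gives $T_t = \id_E$ for every $t \in J$.

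The first step is to exploit Lemma~\ref{lem:maximal-fixed-lower-bound} to obtain, for every $0 < f$, a non-zero maximal fixed lower bound $h \in F$. Since $T_t$ is a lattice homomorphism and $T_t h = h$, we compute $T_t(f \land h) = T_t f \land T_t h = T_t f \land h$; combined with $T_t f \land h \to h$ (a restatement of $(T_t f - h)^- \to 0$) this forces $f \land h > 0$. Hence no positive non-zero vector is disjoint from $F$, so $F^\perp = \{0\}$.

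Next I want to show that the vanishing subspace $N \coloneqq \{g \in E : \norm{T_t g} \to 0\}$ is trivial. The lattice homomorphism identity $\abs{T_t g} = T_t \abs{g}$ makes $N$ an ideal. If $0 \neq g \in N$ and $h \in F$, then $T_t(h \land \abs{g}) = h \land T_t\abs{g} \to 0$, so $h \land \abs{g} \in N$. Should $h \land \abs{g}$ be non-zero, applying (iii) to it would produce a non-zero $h'$ with $\norm{(T_t(h \land \abs{g}) - h')^-} \to 0$; but $T_t(h \land \abs{g}) \to 0$ forces $\norm{(T_t(h \land \abs{g}) - h')^-} \to \norm{h'} > 0$, a contradiction. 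Hence $\abs{g} \land h = 0$ for every $h \in F$, so $\abs{g} \in F^\perp = \{0\}$ by the previous step, contradicting $g \neq 0$. Therefore $N = \{0\}$.

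With these tools the conclusion follows quickly. For $0 < f$ and its maximal fixed lower bound $h$, set $c \coloneqq (h - f)^+$; the lattice homomorphism property yields $T_t c = (T_t h - T_t f)^+ = (h - T_t f)^+ = (T_t f - h)^-$, so $\norm{T_t c} \to 0$ and therefore $c \in N = \{0\}$, i.e.\ $h \le f$. Now for an arbitrary $0 < f$ the set $\{h \in F : h \le f\}$ is norm-bounded and upward directed (because $F$ is $\lor$-stable), so the KB-property of $E$ produces a supremum $h^\star \in F$ with $h^\star \le f$. If $h^\star < f$, applying the previous observation to $f - h^\star > 0$ produces $0 < h' \in F$ with $h' \le f - h^\star$; then $h^\star + h' \in F$ lies below $f$ and strictly exceeds $h^\star$, contradicting maximality. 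Hence $h^\star = f$ and $f \in F$. The step I expect to be the main obstacle is the verification that $N = \{0\}$: here the $T_t$ themselves are lattice homomorphisms, not their adjoints as in Theorem~\ref{thm:ding-abstract-and-for-bounded-sg}, and the argument rests essentially on the commutation $T_t(f \land h) = T_t f \land h$ for $h \in F$ that is available only under the present hypothesis.
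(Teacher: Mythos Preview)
Your proof is correct and takes a genuinely different route from the paper.

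The paper closes the cycle as (iii) $\Rightarrow$ (i) $\Rightarrow$ (ii). For (iii) $\Rightarrow$ (i) it shows that for every normalised $f>0$ the maximal fixed lower bound $h$ satisfies $\norm{h}\ge 1$: if $\norm{h}<1$ then $f-f\land h>0$, and since $T_t(f\land h)=T_tf\land h$ one sees that $h$ is a lower bound for $(\cT,f\land h)$; a non-zero fixed lower bound $h_1$ for $(\cT,f-f\land h)$ then makes $h+h_1$ a fixed lower bound for $(\cT,f)$, contradicting maximality. With $\norm{h_f}\ge 1$ for all normalised $f$, Corollary~\ref{cor:ind-lower-bounds-with-norm-bounds-for-bounded-sg} gives strong convergence. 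The step (i) $\Rightarrow$ (ii) is isolated as Proposition~\ref{prop:semigroup-of-lattice-homomorphisms-identity-operator}: the limit $P$ is a lattice homomorphism, so $\abs{f-Pf}\in\ker P$ and hence $f=Pf$ for every $f$.

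You instead attack (iii) $\Rightarrow$ (ii) directly, showing $E_+\subseteq F$ via the vanishing ideal $N$ and the KB-property. Your argument is more elementary in that it avoids the heavy Theorem~\ref{thm:ind-lower-bounds-with-norm-bounds-for-markov-sg} / Corollary~\ref{cor:ind-lower-bounds-with-norm-bounds-for-bounded-sg}; it uses only Lemma~\ref{lem:maximal-fixed-lower-bound} and simple lattice identities. A small remark: your Step~1 (showing $F^\perp=\{0\}$) and the detour through $h\land\abs{g}$ in Step~2 are superfluous --- if $0\neq g\in N$ then $\abs{g}>0$, $T_t\abs{g}\to 0$, and applying~(iii) to $\abs{g}$ directly yields the contradiction. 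What the paper's approach buys is the passage through~(i), so that the general Proposition~\ref{prop:semigroup-of-lattice-homomorphisms-identity-operator} on arbitrary Banach lattices becomes an explicit ingredient; what your approach buys is self-containment and a clearer structural picture of why every positive vector must be fixed.
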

\begin{proof}
	(i) $\Rightarrow$ (ii): This follows from the more general Proposition~\ref{prop:semigroup-of-lattice-homomorphisms-identity-operator} below.

	(ii) $\Rightarrow$ (iii): This implication is obvious.

	(iii) $\Rightarrow$ (i): Assume that (iii) holds and fix a normalised vector $0<f \in E$.
	According to Lemma~\ref{lem:maximal-fixed-lower-bound} there exists a maximum $h\neq 0$ of the set of all lower bounds for $(\cT,f)$ that are also fixed points of $\cT$.
	We are going to show that $\norm{h} \ge 1$. Suppose to the contrary that $\norm{h} < 1$. Then we have $\norm{f \land h} \le \norm{h} < 1 \le \norm{f}$ and thus, 
	$f - f \land h > 0$. On the other hand, using that every operator $T_t$ is a lattice homomorphism and that $h$ is fixed point of $\cT$, 
	it follows from
	\[ (T_t(f\wedge h) - h)^- = (T_tf \wedge h-h)^- = (T_t f - h)^- \]
	that $h$ is a lower bound for $f \land h$. Again according to Lemma~\ref{lem:maximal-fixed-lower-bound} we find a fixed point $h_1 > 0$ of $\cT$ 
	which is a lower bound for $(\cT, f - f\land h)$. Thus, $h + h_1$ is a fixed point of $\cT$ and a lower bound for $(\cT,f \land h + (f - f \land h)) =(\cT, f)$,
	which contradicts the maximality of $h$.
	
	Hence, we have shown that for every normalised $f \in E_+$ there exists a lower bound $h$ for $(\cT,f)$ such that $\norm{h}\ge 1$. 
	Corollary~\ref{cor:ind-lower-bounds-with-norm-bounds-for-bounded-sg} now implies assertion (i).
\end{proof}

The equivalence of the assertions (i) and (ii) in the above theorem is a special case of the following proposition which holds on general Banach lattices.

\begin{proposition} \label{prop:semigroup-of-lattice-homomorphisms-identity-operator}
	Let $E$ be a Banach lattice and let $\cT = (T_t)_{t \in J}$ be a semigroup on $E$ where either $J = \bbN$ or $J = (0,\infty)$. 
	If each operator $T_t$ is a lattice homomorphism, then the following assertions are equivalent:
	\begin{enumerate}[(i)]
		\item $\cT$ converges strongly to an operator $P \in \calL(E)$ which fulfils $Pf > 0$ for all $f > 0$.
		\item We have $T_t = \id_E$ for all $t \in J$.
	\end{enumerate}
\end{proposition}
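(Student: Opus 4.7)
The implication (ii) $\Rightarrow$ (i) is immediate, so I would focus on (i) $\Rightarrow$ (ii). My plan is first to show that the limit operator $P$ must in fact coincide with $\id_E$, and then to deduce from this that $T_t = \id_E$ for every individual $t \in J$.

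For the preliminary setup I would make two standard observations. On the one hand, by the usual semigroup argument---using the boundedness of each $T_t$ together with the semigroup law---the operator $P$ is a projection and satisfies $T_t P = P T_t = P$ for every $t \in J$. On the other hand, since each $T_t$ is a lattice homomorphism and the lattice operations are continuous on $E$, the strong limit $P$ is itself a lattice homomorphism.

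Next, I would verify that $\ker P = \{0\}$. For $f \in \ker P$ the lattice-homomorphism property together with $f^+ \wedge f^- = 0$ gives
\[ P f^+ \wedge P f^- = P(f^+ \wedge f^-) = 0, \]
while $Pf = 0$ rewrites as $P f^+ = P f^-$. Hence $P f^+ = P f^- = 0$, and the strict positivity hypothesis $Pg > 0$ for $g > 0$ forces $f^+ = f^- = 0$, so $f = 0$. Since $P^2 = P$, this injectivity yields $P = \id_E$.

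Finally, once $P = \id_E$, I fix $t \in J$ and $f \in E$: the strong convergence $T_s g \to g$ applied to $g = T_t f$ gives $T_{s+t} f = T_s (T_t f) \to T_t f$, while the same convergence applied to $g = f$ gives $T_{s+t} f \to f$. Comparing the two limits yields $T_t f = f$ for all $f$, so $T_t = \id_E$. The only potentially delicate step is promoting the lattice-homomorphism property from the $T_t$ to $P$, but this is a routine consequence of the continuity of the modulus map; the rest of the argument is then an entirely algebraic consequence of strict positivity and the projection identity.
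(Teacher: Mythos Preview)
Your proof is correct and follows essentially the same route as the paper: both arguments pass the lattice-homomorphism property to the strong limit $P$, use strict positivity to show $\ker P = \{0\}$ (equivalently $P = \id_E$), and then deduce $T_t = \id_E$ from $T_tP = P$. The only cosmetic difference is that the paper applies the observation ``$g \in \ker P \Rightarrow |g| \in \ker P$'' directly to $g = f - Pf$, whereas you first establish $\ker P = \{0\}$ in the abstract via the decomposition $f = f^+ - f^-$; your final step deriving $T_t = \id_E$ is also spelled out a bit more explicitly than in the paper.
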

\begin{proof}
	The implication ``(ii) $\Rightarrow$ (i)'' is obvious.
	
	(i) $\Rightarrow$ (ii): Let $f \in E$. If (i) holds, then $P$ is also a lattice homomorphism and it thus follows from $f - Pf \in \ker P$ that even $\abs{f-Pf}\in \ker P$.
	Now condition~(i) implies that $\abs{f-Pf}=0$ and therefore $Pf=f$.  Hence, $f$ is a fixed point of $\cT$ and since $f$ was arbitrary we conclude that  $T_t = \id_E$ for every $t \in J$.
\end{proof}

Proposition~\ref{prop:semigroup-of-lattice-homomorphisms-identity-operator} has also has interesting consequences 
for semigroups of Frobenius-Perron operators that we state in the following two corollaries.

\begin{corollary} \label{cor:norm-convergence-of-sg-with-lattice-homomorphisms-as-adjoints}
	Let $E$ be a Banach lattice and let $\cT = (T_t)_{t \in J}$ be a semigroup on $E$ where either $J = \bbN$ or $J = (0,\infty)$. Assume 
	that for every time $t \in J$ the adjoint operator $T_t'$ is a lattice homomorphism and that $\cT$ has a fixed point $f_0 \ge 0$ which is a quasi-interior point of $E_+$.
	
	If $\cT$ converges with respect to the operator norm, then $T_t = \id_E$ for all $t \in J$.
\end{corollary}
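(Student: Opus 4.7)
The plan is to transfer the problem to the adjoint semigroup $(T_t')_{t \in J}$ on $E'$ and then invoke Proposition~\ref{prop:semigroup-of-lattice-homomorphisms-identity-operator} there. As a first step I would observe that the commutativity $T_sT_t = T_{s+t} = T_tT_s$ of the original semigroup passes to the adjoints, so $T_s'T_t' = T_{s+t}'$ and $(T_t')_{t \in J}$ is a semigroup on the Banach lattice $E'$. Each $T_t'$ is a lattice homomorphism by hypothesis, and norm convergence $T_t \to P$ transfers to norm convergence $T_t' \to P'$ (hence in particular strong convergence) via $\norm{T_t' - P'} = \norm{T_t - P}$.

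The decisive step is to verify that $P'$ fulfils the strict positivity required by Proposition~\ref{prop:semigroup-of-lattice-homomorphisms-identity-operator}, namely $P'\varphi > 0$ for every $\varphi > 0$ in $E'$. For this I would exploit the fixed point: norm convergence gives $Pf_0 = \lim_{t\to\infty} T_tf_0 = f_0$, so for any $\varphi \in E'_+$ with $P'\varphi = 0$ one obtains $\applied{\varphi}{f_0} = \applied{\varphi}{Pf_0} = \applied{P'\varphi}{f_0} = 0$. Since $f_0$ is a quasi-interior point of $E_+$, for every $g$ in the dense principal ideal $E_{f_0}$ with $\abs{g} \le c f_0$ we have $\abs{\applied{\varphi}{g}} \le c \applied{\varphi}{f_0} = 0$, and density of $E_{f_0}$ in $E$ then forces $\varphi = 0$. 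By contraposition, $P'\varphi > 0$ whenever $\varphi > 0$.

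With these ingredients in place, Proposition~\ref{prop:semigroup-of-lattice-homomorphisms-identity-operator} applied to $(T_t')_{t \in J}$ on $E'$ yields $T_t' = \id_{E'}$ for every $t \in J$. Since the map $T \mapsto T'$ is injective (an immediate consequence of Hahn--Banach), one concludes $T_t = \id_E$, which is the desired assertion.

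I expect the only substantive point to be the translation of the quasi-interior hypothesis on $f_0$ into the strict positivity property of $P'$; everything else (the semigroup law for adjoints, preservation of the lattice homomorphism condition, and strong convergence on $E'$) is a direct consequence of norm convergence on $E$ together with commutativity of the original semigroup.
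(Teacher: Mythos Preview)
Your proposal is correct and follows essentially the same route as the paper: pass to the adjoint semigroup, use norm (hence strong) convergence $T_t' \to P'$, verify $P'\varphi > 0$ for all $\varphi > 0$ via the quasi-interior fixed point $f_0$, and then invoke Proposition~\ref{prop:semigroup-of-lattice-homomorphisms-identity-operator}. The paper's version of the strict-positivity step is the one-line observation $\applied{P'\psi}{f_0} = \applied{\psi}{Pf_0} = \applied{\psi}{f_0} > 0$ for $\psi > 0$, which is precisely the fact you unpack by contraposition via the density of $E_{f_0}$.
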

\begin{proof}
	Assume that $\cT$ converges with respect to the operator norm to an operator $P \in \calL(E)$.
	Then the adjoint semigroup $\cT' \coloneqq (T_t')_{t \in J}$ converges with respect to the operator norm and thus also strongly to $P' \in \calL(E)$. 
	For every $0 < \psi \in E'$ we have 
	$\langle P'\psi, f_0 \rangle = \langle \psi, P f_0 \rangle = \langle \psi, f_0 \rangle > 0$ since $f_0$ is a quasi-interior point of $E_+$,
	which shows that $P' \psi > 0$.
	Proposition~\ref{prop:semigroup-of-lattice-homomorphisms-identity-operator} now implies that $T_t' = \id_{E'}$ for all $t \in J$. This completes the proof.
\end{proof}

As for instance observed by Lasota in \cite[p.\,398]{lasota1983} convergence in operator norm is rather uncommon for semigroups 
of Frobenius-Perron operators. The following corollary shows that it can almost never occur if the underlying transformations are 
measure preserving mappings on a finite measure space.

\begin{corollary} \label{cor:norm-convergence-frobenius-perron-semigroups}
	Let $(\Omega,\mu)$ be a finite measure space and let $J = \bbN$ or $J = (0,\infty)$. 
	For every $t \in J$ let $\varphi_t\colon \Omega \to \Omega$ be a measurable mapping which is measure-preserving, meaning that
	$\mu(\phi_t^{-1}(A)) = \mu(A)$ for all measurable sets $A \subseteq \Omega$, and assume that $\varphi_{t+s} = \varphi_t \circ \varphi_s$ for all $t,s \in J$.
	
	For each $t\in J$ denote by $T_t\colon L^1(\Omega,\mu) \to L^1(\Omega,\mu)$ the Frobenius-Perron operator associated with $\varphi_t$.
	If the semigroup $\cT = (T_t)_{t \in J}$ is convergent with respect to the operator norm, then $T_t = \id_{L^1(\Omega,\mu)}$ for all $t \in J$.
\end{corollary}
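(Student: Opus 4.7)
The plan is to deduce the result directly from Corollary~\ref{cor:norm-convergence-of-sg-with-lattice-homomorphisms-as-adjoints} applied to $E = L^1(\Omega,\mu)$ and the semigroup $\cT$. To do this, I must check two hypotheses: (i) for every $t \in J$ the adjoint $T_t'$ is a lattice homomorphism, and (ii) $\cT$ admits a fixed point $f_0 \ge 0$ which is a quasi-interior point of $L^1(\Omega,\mu)_+$.

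For (i), since $\mu$ is finite (hence $\sigma$-finite), the dual of $L^1(\Omega,\mu)$ is $L^\infty(\Omega,\mu)$. Testing the defining identity of the Frobenius--Perron operator against indicator functions shows that for every $f \in L^1$ and every measurable $A \subseteq \Omega$,
\[ \int_\Omega \mathds{1}_A \cdot T_tf \dx\mu = \int_{\varphi_t^{-1}(A)} f \dx\mu = \int_\Omega (\mathds{1}_A \circ \varphi_t) \, f \dx\mu, \]
so $T_t' g = g \circ \varphi_t$ for each $g \in L^\infty$. This Koopman operator is a lattice homomorphism since the pointwise operations $\vee$ and $\wedge$ commute with precomposition by $\varphi_t$.

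For (ii), set $f_0 \coloneqq \mathds{1}_\Omega$, which belongs to $L^1(\Omega,\mu)$ because $\mu(\Omega) < \infty$. The measure-preserving property yields, for every measurable $A \subseteq \Omega$,
\[ \int_A T_t \mathds{1}_\Omega \dx\mu = \int_{\varphi_t^{-1}(A)} \mathds{1}_\Omega \dx\mu = \mu(\varphi_t^{-1}(A)) = \mu(A) = \int_A \mathds{1}_\Omega \dx\mu, \]
so $T_t f_0 = f_0$ for all $t \in J$. Moreover, the principal ideal generated by $\mathds{1}_\Omega$ in $L^1(\Omega,\mu)$ contains all bounded measurable functions and is therefore dense, so $f_0$ is a quasi-interior point of $L^1(\Omega,\mu)_+$. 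With (i) and (ii) verified, Corollary~\ref{cor:norm-convergence-of-sg-with-lattice-homomorphisms-as-adjoints} immediately gives $T_t = \id_{L^1(\Omega,\mu)}$ for all $t \in J$. I anticipate no real obstacle here; the only mildly delicate points are identifying the adjoint with the Koopman operator and recalling that $\mathds{1}_\Omega$ is a quasi-interior point of the positive cone on a finite measure space.
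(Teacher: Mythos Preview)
Your proof is correct and follows essentially the same approach as the paper: verify that each $T_t'$ is the Koopman operator (hence a lattice homomorphism), observe that $\mathds{1}_\Omega$ is a quasi-interior fixed point since the $\varphi_t$ are measure-preserving on a finite measure space, and then invoke Corollary~\ref{cor:norm-convergence-of-sg-with-lattice-homomorphisms-as-adjoints}. The paper's proof is simply a terser version of what you wrote.
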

\begin{proof}
	For each $t \in J$ the adjoint operator $T_t'\colon L^\infty(\Omega,\mu) \to L^\infty(\Omega,\mu)$ is the Koopman operator associated with $\varphi_t$ 
	and hence a lattice homorphism. Moreover, since every mapping $\varphi_t$ is measure preserving and the measure space $(\Omega,\mu)$ is finite, 
	the constant function with value $1$ is a fixed vector of $\cT$. 
	Hence, the assertion follows from Corollary~\ref{cor:norm-convergence-of-sg-with-lattice-homomorphisms-as-adjoints}.
\end{proof}

The assertion of the above corollary does no longer hold if one drops the condition on $\varphi_t$ to be measure preserving. 
As a counterexample, let $\Omega = \{1,2\}$ be endowed with the counting measure, 
let $\varphi\colon \Omega \to \Omega$ be given by $\varphi(\omega) = 1$ for all $\omega \in \Omega$ 
and define $\varphi_t \coloneqq \varphi$ for all $t \in J$ (no matter whether $J = \bbN$ or $J = (0,\infty)$). 
Then we clearly have $\varphi_{t+s} = \varphi_t \circ \varphi_s$ for all $t,s \in J$ and the semigroup of the induced Frobenius-Perron operators is constant and 
therefore convergent with respect to the operator norm. Yet, the semigroup operators do not equal the identity operator.

\section{Lower bounds on Banach lattices} \label{section:lower-bounds-general-banach-lattices}

In the following we generalise some of the results of the previous sections from AL-spaces to more general Banach lattices. 
Theorem~\ref{thm:lower-bounds-only-on-al-spaces} however suggests that the previous concept of lower bounds 
is not well adapted to other Banach lattices. 
In a series of papers \cite{rudnicki1986, lasota1988, socala1998, socala2002, socala2003} Rudnicki, Lasota and Soca{\l}a 
used a modified lower bound condition by requiring, in some sense, 
the existence of a lower bound for the sequences $\bigl(\frac{T^nf}{\norm{T^nf}}\bigr)_{n \in \bbN}$ instead for the sequences $(T^nf)_{n \in \bbN}$,
in order to obtain convergence results on more general spaces.
Here, we use another approach and consider lower bounds \emph{with respect to a given strictly positive functional}.
A related idea can also be found in \cite{erkursun2016}. We make this precise in the following definition:

\begin{definition}
	Let $E$ be a Banach lattice and let $\cT = (T_t)_{t \in J}$ be a positive semigroup on $E$
	where either $J = \bbN$ or $J = (0,\infty)$. Let $\psi \in E'_+$. 
	\begin{enumerate}[(a)]
		\item A vector $h \in E_+$ is called a \emph{lower bound for $\cT$ with respect to $\psi$} if $\applied{\psi}{(T_tf-h)^-}\to 0$ as $t\to\infty$ for every
		$f \in E_+$ with $\langle \psi, f \rangle = 1$.
		\item Let $f \in E_+$. A vector $h \in E_+$ is called a \emph{lower bound for $(\cT,f)$ with respect to $\psi$} if $\langle \psi, (T_tf - h)^-\rangle \to 0$ as $t \to \infty$.
	\end{enumerate}
\end{definition}

Now we can prove a result similar to Theorem~\ref{thm:ind-lower-bounds-with-norm-bounds-for-markov-sg} on general Banach lattices with order continuous norm under an additional assumption on the semigroup.

\begin{theorem}
	\label{thm:ind-lower-bound-wrt-psi}
	Let $E$ be a Banach lattice with order continuous norm and let $\cT = (T_t)_{t \in J}$ be a bounded, positive 
	semigroup on $E$ where either $J = \bbN$ or $J =(0,\infty)$. 
	Assume that there exists a constant $M > 0$, a quasi-interior point $f_0$ of $E_+$ and a strictly positive functional $\psi \in E'_+$ 
	such that $T_t f_0 \le M f_0$ and $T_t' \psi \le M \psi$ for all $t \in J$. Then the following assertions are equivalent:
	\begin{enumerate}[(i)]
		\item $\cT$ is strongly convergent and the limit operator $P$ fulfils $P'\psi \ge \varepsilon \psi$ for some $\varepsilon > 0$.
		\item For every $f \in E_+$ with $\langle \psi, f \rangle = 1$ there exists a non-zero lower bound $h_f$ for $(\cT,f)$ with respect to $\psi$ such that $\inf_{f} \langle \psi, h_f \rangle > 0$.
	\end{enumerate}
\end{theorem}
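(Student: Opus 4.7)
The direction (i) $\Rightarrow$ (ii) is immediate: setting $h_f \coloneqq Pf$ for each normalised $f \in E_+$, the continuity of $\psi$ together with the strong convergence $T_tf \to Pf$ yields $\applied{\psi}{(T_tf - Pf)^-} \to 0$, while $\applied{\psi}{h_f} = \applied{P'\psi}{f} \ge \eps \applied{\psi}{f} = \eps$ provides the uniform lower norm bound.

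For (ii) $\Rightarrow$ (i), my plan is to reduce to Corollary~\ref{cor:ind-lower-bounds-with-norm-bounds-for-bounded-sg}. Define $\norm{f}_\psi \coloneqq \applied{\psi}{\abs{f}}$ on $E$, which is a norm since $\psi$ is strictly positive, and let $\tilde E$ denote the AL-completion of $(E, \norm{\argument}_\psi)$, with canonical injective continuous embedding $\iota\colon E \to \tilde E$. The dual sub-invariance $T_t'\psi \le M\psi$ gives $\norm{T_tf}_\psi \le M\norm{f}_\psi$ for $f \in E$, so $\cT$ extends uniquely to a bounded positive semigroup $\tilde\cT$ on $\tilde E$, and $\iota(E)$ is $\tilde\cT$-invariant.

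Hypothesis (ii) provides, for every $f \in \iota(E_+)$ of unit $\norm{\argument}_\psi$-norm, a lower bound $h_f \in \iota(E_+)$ for $(\tilde\cT, f)$ with $c \coloneqq \inf_f \applied{\psi}{h_f} > 0$; a short computation using $T_t'\psi \le M\psi$ moreover yields the upper bound $\norm{h_f}_\psi \le M$. In order to apply Corollary~\ref{cor:ind-lower-bounds-with-norm-bounds-for-bounded-sg} to $\tilde\cT$ on $\tilde E$ this lower bound property must be established for every normalised $f \in \tilde E_+$, not merely for those in the dense subcone $\iota(E_+)$. Given such an $f$, I would approximate it by normalised $(f_n) \subseteq \iota(E_+)$ with $\norm{f_n - f}_\psi \to 0$ (using that positive parts of approximations remain in $\iota(E_+)$); the corresponding lower bounds $(h_{f_n})$ are then bounded in $\tilde E$ between $c$ and $M$, and an ultra power construction in $\tilde E^\cU$ along a free ultrafilter $\cU$, combined with the KB-property of $\tilde E$, should produce a common lower bound $h_f \in \tilde E_+$ with $\norm{h_f}_\psi \ge c$. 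This extension step is the main technical obstacle of the argument.

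Once Corollary~\ref{cor:ind-lower-bounds-with-norm-bounds-for-bounded-sg} yields strong convergence of $\tilde\cT$ on $\tilde E$ to a limit $\tilde P$ with $\norm{\tilde Pg}_\psi \ge \eps \norm{g}_\psi$ for all $g \in \tilde E_+$, it remains to transfer this convergence back to the $E$-norm. Here the assumption $T_tf_0 \le Mf_0$ enters: for any $f \in E_{f_0}$ with $\abs{f} \le Kf_0$ the orbit $(T_tf)$ stays in the fixed order interval $[-MKf_0, MKf_0] \subseteq E$. Being Cauchy in $\norm{\argument}_\psi$ and lying in a fixed order interval, the orbit is also Cauchy in the $E$-norm by a standard application of order continuity of the norm. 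Thus $(T_tf)$ converges in the $E$-norm for all $f \in E_{f_0}$, and density of $E_{f_0}$ (since $f_0$ is a quasi-interior point of $E_+$) together with boundedness of $\cT$ extends convergence to all of $E$. The inequality $P'\psi \ge \eps \psi$ descends from its $\tilde E$-analogue by passing to $\norm{\argument}_\psi$-limits.
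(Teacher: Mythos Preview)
Your overall architecture matches the paper's proof exactly: renorm via $\psi$ to obtain an AL-completion (call it $F$), extend the semigroup to $\cS$ on $F$, verify the hypothesis of Corollary~\ref{cor:ind-lower-bounds-with-norm-bounds-for-bounded-sg} there, and then pull the convergence back to $E$ via the quasi-interior point $f_0$ and order continuity. The transfer step at the end is also essentially as in the paper.

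The gap is precisely where you flagged it: extending the individual-lower-bound hypothesis from $\iota(E_+)$ to all of $F_+$. Your proposed mechanism---an ultra power construction together with the KB-property---does not work as stated. A bounded sequence $(h_{f_n})$ in an AL-space need not have any limit point in the space; passing to $F^\cU$ produces an element of the ultra power, not of $F$, and there is no canonical projection back. The KB-property, for its part, only yields limits of \emph{increasing} bounded nets, and you have arranged no monotonicity whatsoever among the $h_{f_n}$.

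The paper resolves this with two ingredients you have not invoked. First, because $E$ has order continuous norm, $E$ sits as an \emph{ideal} in its AL-completion $F$ \cite[Lem~IV.9.3]{schaefer1974}. Hence for $f \in F_+$ the set $A = \{g \in E_+ : g \le f\}$ is upward directed and the net $(g)_{g \in A}$ converges to $f$ in $F$ (approximate $f$ by $f_n \in E$ and note that $f_n^+ \wedge f \in E$ by the ideal property). Second, for each $g \in A$ one takes the \emph{maximal} lower bound $h_g$ for $(\cS,g)$, which exists as in Lemma~\ref{lem:hmax-uniform}; the crucial point is that $g_1 \le g_2$ forces $h_{g_1} \le h_{g_2}$, so $(h_g)_{g \in A}$ is an increasing bounded net in the KB-space $F$ and therefore converges. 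A direct estimate then shows the limit is a lower bound for $(\cS,f)$ of $\psi$-norm at least $\delta$. Monotone approximation from below via the ideal property, combined with the monotonicity of maximal lower bounds, are exactly the two ideas your sketch is missing.
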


Note that the inequalities $T_t f_0 \le M f_0$ and $T_t' \psi \le M \psi$ for all $t \in J$ and some constant $M > 0$ are in particular fulfilled 
if $f_0$ is a fixed vector and $\psi$ is a fixed functional of $\cT$; in this case one can, of course, choose $M = 1$.

\begin{proof}[Proof of Theorem~\ref{thm:ind-lower-bound-wrt-psi}]
	(i) $\Rightarrow$ (ii): If (i) holds, then for every $f \in E_+$ with $\langle \psi, f\rangle = 1$, $Pf$ is a lower bound for $(\cT,f)$ with respect to $\psi$ and we have $\langle \psi, Pf \rangle \ge \varepsilon > 0$.
	
	(ii) $\Rightarrow$ (i): Consider the norm $\norm{f}_\psi \coloneqq \applied{\psi}{\abs{f}}$ on $E$, which is clearly additive on the positive cone.
	Since $\norm{T_tf}_\psi \leq \applied{\psi}{T_t \abs{f}} \leq M \norm{f}_\psi$, the semigroup is bounded by $M$ with respect to $\norm{\argument}_\psi$.
	Denote by $F$ the completion of $(E,\norm{\argument}_\psi)$ and by $\cS = (S_t)_{t\in J}$ the extension of $\cT$ to $F$. Then $\cS$ is a positive semigroup on the AL-space $F$
	that is bounded by $M$.
	By assumption (ii) there exists a number $\delta > 0$ with the property that
	for every $f \in E_+$ we can find a vector $h_f \in E_+$ with $\norm{h_f}_\psi \ge \delta\norm{f}_\psi$ such that $h_f$ is a lower bound for $(\cS,f)$ in $F$;
	moreover, since
	\[ 0 \leq h_f = h_f - T_tf + T_tf \leq (T_tf-h_f)^- + T_tf \]
	for all $t\in J$, we also have that $\norm{h_f}_\psi \leq M \norm{f}_\psi$.

	Now, let $f \in F_+$ with $\norm{f}_\psi = 1$. We show that there exists a lower bound $h_f \in E_+$ for $(\cS,f)$ with $\norm{h_f} \ge \delta$.
	To this end, let $A \coloneqq \{g \in E_+:  g \le f\}$. Clearly, $A$ is an upwards directed set. 
	Since $E$ is dense in $F$ we can find a sequence $(f_n) \subseteq E$ which converges to $f$ with respect to $\norm{\argument}_\psi$. 
	As $E$ has order continuous norm, it is an ideal in $F$ \cite[Lem IV.9.3]{schaefer1974}. Hence, $(f_n^+ \land f)$ 
	is a sequence in $A$ which converges to $f$ with respect to $\norm{\argument}_\psi$ and hence the net $(g)_{g \in A}$ converges to $f$ with respect to $\norm{\argument}_\psi$. 
	By the same argument as in the proof of Lemma~\ref{lem:hmax-uniform} it follows that for every $g \in A\subseteq E_+$ 
	the set of all lower bounds for $(\cS,g)$ has a maximum $h_g \in F$ and this vector $h_g$ clearly 
	fulfils $\delta \norm{g}_\psi \le \norm{h_g}_\psi \le M\norm{g}_\psi$.
	Moreover, we clearly have $h_{g_1} \le h_{g_2}$ whenever $g_1 \le g_2$. Thus, the net $(h_g)_{g \in A}$ is increasing and norm bounded in the KB-space $F$ and hence convergent to a vector $h_f \in F$
	that fulfils
	\[ \norm{h_f}_\psi = \sup_{g\in A} \norm{h_g}_\psi \geq \sup_{g\in A} \delta\norm{g}_\psi = \delta \norm{f}_\psi = \delta.\]
	Moreover, $h_f$ is a lower bound for $(\cS,f)$. Indeed, for a given $\eps>0$ choose $g\in A$ such that $\norm{g-f}<\eps$ and $\norm{h_g-h_f}<\eps$. Then
	\[ \norm{(S_t f - h_f)^-}_\psi \leq \norm{S_t f-S_t g}_\psi + \norm{(S_tg-h_g)^-}_\psi + \norm{h_g-h_f}_\psi \leq M\eps +\eps+\eps\]
	whenever $\norm{(S_tg-h_g)^-}_\psi<\eps$. 
	Thus, the assumptions of Corollary~\ref{cor:ind-lower-bounds-with-norm-bounds-for-bounded-sg}(ii) are fulfilled and we conclude that $\cS$ is strongly convergent.
	
	Now, let $f$ be an element of the principal ideal $E_{f_0}$ and choose $c>0$ such that $\abs{f} \le cf_0$. We proved that $(T_t f)_{t\in J} = (S_tf)_{t\in J}$ 
	converges with respect to $\norm{\argument}_\psi$ to a vector $g_f \in F$ as $t \to \infty$. 
	For every $t \in J$ we have $\abs{T_tf} \le T_t\abs{f} \le cT_tf_0 \le cM f_0$ and therefore $\abs{g_f} \le c M f_0$. Since $E$ is an ideal in $F$, this shows that $g_f \in E$. As $E$ and $F$ have order continuous norm, the fact that $E$ is an ideal in $F$ moreover implies that the topologies of $\norm{\argument}$ and $\norm{\argument}_\psi$ coincide on
	every order interval in $E$ \cite[Thm 2.4.8]{meyer1991}.
	Therefore, the net $(T_tf)_{t \in J}$ converges to $g_f$ with respect to $\norm{\argument}$.
	
	We have shown that $\lim T_tf$ exists in $E$ for every $f$ in the principal ideal generated by $f_0$. 
	Since $f_0$ is a quasi-interior point, this principal ideal is dense in $E$ and as the semigroup $\cT$ is bounded, it follows that $(T_t)_{t \in J}$ converges strongly on $E$ to an operator $P\in \cL(E)$.
	Finally, let $\varepsilon \coloneqq \inf_{f} \langle \psi, h_f \rangle > 0$, where the infimum runs over all $f \in E_+$ with $\langle \psi, f \rangle = 1$. Then $P$ clearly fulfils $\langle \psi, Pf\rangle \ge \varepsilon \langle \psi, f \rangle$ for every $f \in E_+$.
\end{proof}

We also obtain a similar result for strong convergence to a rank-$1$ projection:

\begin{theorem}
	\label{thm:unif-lower-bound-wrt-psi}
	Let $E$ be a Banach lattice with order continuous norm and let $\cT = (T_t)_{t \in J}$ be a bounded positive semigroup on $E$ where either $J = \bbN$ or $J =(0,\infty)$. 
	Assume that there exists a constant $M > 0$, a quasi-interior point $f_0$ of $E_+$ and a strictly positive functional $\psi \in E'_+$ 
	such that $T_t f_0 \le M f_0$ and $T_t' \psi \le M \psi$ for all $t \in J$. Then the following assertions are equivalent:
	\begin{enumerate}[(i)]
		\item $\cT$ is strongly convergent and the limit operator $P$ is a rank-$1$ projection which fulfils $P'\psi \ge \varepsilon \psi$ for some $\varepsilon > 0$.
		\item There exists a non-zero lower bound $h$ for $\cT$ with respect to $\psi$.
	\end{enumerate}
\end{theorem}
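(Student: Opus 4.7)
Proof plan for Theorem~\ref{thm:unif-lower-bound-wrt-psi}: the strategy is to mirror the proof of Theorem~\ref{thm:ind-lower-bound-wrt-psi}, but using the rank-$1$ version of the Lasota--Yorke theorem (Corollary~\ref{cor:lasota-yorke-for-bounded-semigroups}) in place of Corollary~\ref{cor:ind-lower-bounds-with-norm-bounds-for-bounded-sg}.

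For the easy implication (i) $\Rightarrow$ (ii), write the rank-$1$ limit as $P = \varphi \otimes g_0$ with $g_0 \in E_+$ and $\varphi \in E'_+$ a fixed functional. The inequality $P'\psi \ge \varepsilon\psi$ tested against an arbitrary $f \in E_+$ with $\applied{\psi}{f} = 1$ yields $\applied{\varphi}{f}\applied{\psi}{g_0} \ge \varepsilon$, so $\applied{\varphi}{f} \ge \varepsilon/\applied{\psi}{g_0}$; hence $h \coloneqq (\varepsilon/\applied{\psi}{g_0})\,g_0$ satisfies $Pf \ge h$. Since $T_tf \to Pf$ in norm and $\psi$ is continuous, $\applied{\psi}{(T_tf - h)^-} \to 0$, so $h$ is a non-zero lower bound for $\cT$ with respect to $\psi$.

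For the hard direction (ii) $\Rightarrow$ (i), I would equip $E$ with the additive-on-$E_+$ seminorm $\norm{f}_\psi \coloneqq \applied{\psi}{\abs{f}}$ and let $F$ be its completion, so that $F$ is an AL-space and $\cT$ extends to a positive semigroup $\cS = (S_t)_{t \in J}$ on $F$ bounded by $M$ (using $T_t'\psi \le M\psi$). The non-zero lower bound $h \in E_+$ for $\cT$ with respect to $\psi$ is immediately a non-zero lower bound for $\cS$ on the dense subset $E \cap \{f \ge 0 : \norm{f}_\psi = 1\}$; to upgrade this to all of $\{f \in F_+ : \norm{f}_\psi = 1\}$ one approximates a given $f \in F_+$ of $\psi$-norm $1$ by elements of $E_+$ and uses the bound $\norm{S_t} \le M$ together with the triangle inequality for the lattice operation $(\,\cdot\,)^-$. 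Corollary~\ref{cor:lasota-yorke-for-bounded-semigroups} applied to $\cS$ then yields that $\cS$ converges strongly to a rank-$1$ projection $Q = \varphi_F \otimes g_0$ with $g_0 \in F_+$, $\norm{g_0}_\psi = 1$, and $\applied{\varphi_F}{g} \ge \delta$ for some $\delta > 0$ and all $g \in F_+$ of $\psi$-norm $1$.

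The next step is to transfer this convergence from $F$ back to $E$ in operator norm on the principal ideal $E_{f_0}$. Applying $\cS$ to the normalised vector $f_0/\norm{f_0}_\psi \in E$ yields $g_0 = \lim_{t} S_t(f_0/\norm{f_0}_\psi) \le Mf_0/\norm{f_0}_\psi$; since $E$ has order continuous norm, $E$ is an ideal in $F$ by \cite[Lem~IV.9.3]{schaefer1974}, so $g_0 \in E$. For any $f \in E_{f_0}$, pick $c > 0$ with $\abs{f} \le cf_0$; then $\abs{T_t f} \le cMf_0$ for all $t$, so the net $(T_tf)_{t \in J}$ lies in a fixed order interval of $E$. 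By \cite[Thm~2.4.8]{meyer1991} the $\norm{\argument}$- and $\norm{\argument}_\psi$-topologies agree on such intervals, hence $T_tf$ converges in $E$ to the limit $Qf$, which also lies in $E$. Since $E_{f_0}$ is dense in $E$ (as $f_0$ is a quasi-interior point) and $\cT$ is bounded on $E$, an $\varepsilon/3$-argument extends convergence to all of $E$; define $P \in \calL(E)$ as the strong limit.

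Finally one checks that $P$ is of rank $1$ and satisfies the spectral inequality. On $E_{f_0}$ one has $Pf = Qf = \applied{\varphi_F}{f}\,g_0$, and by density and continuity this formula persists on $E$ with $\varphi \coloneqq \varphi_F|_E \in E'$ (bounded because $\norm{f}_\psi \le \norm{\psi}\norm{f}$); in particular $P = \varphi \otimes g_0$ is a rank-$1$ projection. For $f \in E_+$ one computes $\applied{\psi}{Pf} = \applied{\varphi}{f}\applied{\psi}{g_0} = \applied{\varphi}{f} \ge \delta\applied{\psi}{f}$, giving $P'\psi \ge \delta\psi$. I expect the main obstacle to be the passage from convergence on the AL-completion $F$ to convergence in the norm of $E$, since this requires one to verify carefully that $g_0 \in E$ and that order boundedness of orbits in $E_{f_0}$ lets one invoke coincidence of topologies on order intervals; all other steps are close variants of the arguments already used in Theorem~\ref{thm:ind-lower-bound-wrt-psi}.
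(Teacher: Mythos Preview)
Your proposal is correct and follows exactly the route the paper takes: the authors simply say that the result follows from Corollary~\ref{cor:lasota-yorke-for-bounded-semigroups} by the same arguments used to derive Theorem~\ref{thm:ind-lower-bound-wrt-psi} from Corollary~\ref{cor:ind-lower-bounds-with-norm-bounds-for-bounded-sg}, which is precisely the completion-to-AL-space and transfer-back-via-order-intervals argument you spell out. Your observation that passing the single lower bound $h$ from $E_+$ to $F_+$ requires only a density and continuity argument (rather than the more elaborate ideal-based approximation used for individual lower bounds in Theorem~\ref{thm:ind-lower-bound-wrt-psi}) is also consistent with the paper's later remark preceding Proposition~\ref{prop:no-fixed-point-but-compact-orbits-without-os-norm}.
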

\begin{proof}
	This follows from Corollary~\ref{cor:lasota-yorke-for-bounded-semigroups} by the same arguments we used to derive Theorem~\ref{thm:ind-lower-bound-wrt-psi} from 
	Corollary~\ref{cor:ind-lower-bounds-with-norm-bounds-for-bounded-sg}. We thus omit the details.
\end{proof}

Note that the above Theorems~\ref{thm:ind-lower-bound-wrt-psi} and~\ref{thm:unif-lower-bound-wrt-psi} yield something new even on AL-spaces, 
since we might encounter a semigroup $\cT$ on an AL-space which does not have a non-zero lower bound (with respect to the norm) but a non-zero lower bound with respect to some strictly positive functional.

\begin{remarks}
	(a) Using the methods from the proof of Theorem~\ref{thm:ind-lower-bound-wrt-psi} it is easy to prove a version of Theorem~\ref{thm:ding-abstract-and-for-bounded-sg} on Banach lattices 
	with order continuous norm, provided that there are a quasi-interior point $f_0 \in E_+$ and a strictly positive functional $\psi \in E'_+$ which fulfils similar properties as in Theorem~\ref{thm:ind-lower-bound-wrt-psi}: using the notation from the proof of Theorem~\ref{thm:ind-lower-bound-wrt-psi}, 
	one can use the fact that $E$ is an ideal in $F$ together with Proposition~\ref{prop:interval-preserving-if-the-adjoint-is-a-lattice-homomorphism} and \cite[Thm~1.4.19(ii)]{meyer1991} to show that the adjoint operators $S_t' \in \calL(F')$ 
	of the extended semigroup $\cS \subseteq \calL(F)$ are lattice homomorphisms and thus one can apply Theorem~\ref{thm:ding-abstract-and-for-bounded-sg}.
	
	(b) Similarly, one can prove a version of Theorem~\ref{thm:ind-lower-bounds-for-sg-of-lattice-homomorphisms}  under appropriate assumptions
	on Banach lattices with order continuous norm.
		
	(c) One cannot simply combine Theorem~\ref{thm:unif-lower-bound-wrt-psi} with an ultra power argument to obtain a result on convergence in operator 
	norm as we did in Corollary~\ref{cor:lasota-yorke-for-bounded-semigroups-and-convergence-in-operator-norm}. 
	Even if an ultra power $E^\cU$ of $E$ has again order continuous norm (which is e.g.\ true for any $L^p$-space where $p \in [1,\infty)$), 
	the following problems occur: the lifting $f_0^\cU \in (E^\cU)_+$ of a quasi-interior point $f_0 \in E_+$ is in general not a quasi-interior point of $(E^\cU)_+$; similarly, the lifting of a strictly positive functional $\varphi \in E'_+$ to the functional $\varphi^\cU \in (E^\cU)'$, given by $\langle \varphi^\cU, (f_n)^\cU \rangle = \lim_\cU \langle \varphi, f_n \rangle$ for all $(f_n)^\cU \in E^\cU$, is in general not a strictly positive functional on $E^\cU$.
\end{remarks}

The role of $\psi$ in Theorems~\ref{thm:ind-lower-bound-wrt-psi} and~\ref{thm:unif-lower-bound-wrt-psi} is quiet different from the role of $f_0$: 
the strictly positive functional $\psi$ is needed for the notion of a lower bound to make sense at all (see the discussion at the beginning of this section).
Hence it can be seen as some kind of substitute for the norm on AL-spaces and it appears in the equivalent conditions (i) and (ii) of the theorems. 
The quasi-interior point $f_0$ on the other hand does not appear in the conditions (i) and (ii) and it has no counterpart 
in Corollaries~\ref{cor:lasota-yorke-for-bounded-semigroups} and~\ref{cor:ind-lower-bounds-with-norm-bounds-for-bounded-sg} on AL-spaces; 
its only purpose is to ensure that the orbits of the semigroup are order bounded in $E$. 
Hence, the existence of $f_0$ is an additional assumption which was superfluous on AL-spaces and it is natural to ask whether this condition 
can be omitted in Theorems~\ref{thm:ind-lower-bound-wrt-psi} and~\ref{thm:unif-lower-bound-wrt-psi}. Our answer to this question consists of two parts: 
first we give a counterexample which shows that 
condition is needed in general, see Example~\ref{ex:counter-example-quasi-interior-fixed-point}. 
Afterwards we show that the condition can, however, be dropped if the orbits of the semigroup are relatively compact,
see Propositions~\ref{prop:no-fixed-point-but-compact-orbits} and~\ref{prop:no-fixed-point-but-compact-orbits-without-os-norm} below.

\begin{example} \label{ex:counter-example-quasi-interior-fixed-point}
	Fix $p \in (1,\infty)$. There exists a finite measure $\mu$ on $\bbN_0$, a bounded linear operator $T$ on $\ell^p \coloneqq \ell^p(\bbN_0,\mu)$ 
	and a strictly positive linear functional $\psi$ on $\ell^p$ with the following properties:
	\begin{enumerate}[(a)]
		\item $T$ is positive and power-bounded.
		\item $T'\psi = \psi$ and there exists a non-zero lower bound $\cT = (T^n)_{n \in \bbN}$ with respect to $\psi$.
		\item The semigroup $(T^n)_{n\in\N}$ is not strongly convergent.
	\end{enumerate}
	Indeed, define $\mu$ by $\mu(M) = \sum_{k\in M} \frac{1}{k^p}$ for all $M \subseteq \bbN$ and $\mu(\{0\}) = 1$. Since $p > 1$, this measure is finite. 
	We first define an operator $T_1$ on $\ell^1 \coloneqq \ell^1(\bbN_0,\mu)$ and a functional $\psi_1$ on $\ell^1$ and 
	then we obtain $T$ and $\psi$ as their restrictions to $\ell^p$. 
	Let $\psi_1$ be the norm functional on $\ell^1$, i.e.\ 
	\begin{align*}
		\langle \psi_1, f \rangle = \sum_{k \in \bbN_0} f(k) \mu(\{k\}) = f(0) + \sum_{k=1}^\infty f(k) \frac{1}{k^p}
	\end{align*}
	$f \in \ell^1$. Note that $\ell^p(\bbN_0,\mu) \eqqcolon \ell^p \subseteq \ell^1$ 
	since $\mu$ is a finite measure and let $\psi \coloneqq \psi_1|_{\ell^p}$. Clearly, $\psi$ is strictly positive.
	To define the operator $T_1$ we introduce another positive functional $\alpha_1$ on $\ell^1$, given by
	\begin{align*}
		\langle \alpha_1,f \rangle = \sum_{k=2}^\infty \frac{k}{k-1}f(k-1) \frac{1}{k^p} = \sum_{k=1}^\infty \frac{k+1}{k} f(k) \frac{1}{(k+1)^p}
	\end{align*}
	for each $f \in \ell^1$. One easily checks that $\langle \alpha_1, f\rangle \le \langle \psi_1, f \rangle = \norm{f}_1$ for all $f \in \ell^1_+$, 
	so $\alpha_1$ is dominated by $\psi_1$ and contractive.
	Now we define the operator $T_1 \in \calL(\ell^1)$ by
	\begin{align*}
		(T_1f)(k) =
		\begin{cases}
			\langle \psi_1-\alpha_1, f\rangle \quad & \text{if } k = 0 \\
			0 \quad & \text{if } k = 1 \\
			\frac{k}{k-1} f(k-1) \quad & \text{if } k \ge 2
		\end{cases}
	\end{align*}
	This is easily checked to be indeed a positive linear operator on $\ell^1$. The functional $\psi_1$ is a fixed point of $T_1'$ since we have 
	\begin{align*}
		\langle \psi_1, Tf \rangle = \langle \psi_1, (Tf)|_{\{0\}} + (Tf)|_{\bbN}\rangle & = (Tf)(0) + \sum_{k=2}^\infty \frac{k}{k-1}f(k-1) \frac{1}{k^p}  \\
		& = \langle \psi_1 - \alpha_1, f \rangle + \langle \alpha_1, f\rangle = \langle \psi_1, f\rangle,
	\end{align*}
	for every $f \in \ell^1$; in the above computation, we consider $(Tf)|_{\{0\}}$ and $(Tf)|_{\bbN}$ again as elements of $\ell^1$ which are zero on $\bbN$ and $\{0\}$, respectively. As $T_1' \psi_1 = \psi_1$, we conclude that $T_1$ is a Markov operator. If $f \in \ell^p$, then we have
	\begin{align*}
		\norm{T_1f}_p^p & = \sum_{k=0}^\infty \abs{Tf(k)}^p \mu(\{k\}) = \abs{\langle\psi_1-\alpha_1,f\rangle}^p + \sum_{k=2}^\infty \frac{k^p}{(k-1)^p} \abs{f(k-1)}^p \frac{1}{k^p} \\
		& \le \langle \psi, \abs{f}\rangle^p + \sum_{k=1}^\infty \abs{f(k)}^p \frac{1}{k^p} \le (\norm{\psi}^p+1) \norm{f}_p^p < \infty.
	\end{align*}
	Hence, $T_1$ leaves $\ell^p$ invariant and its restriction $T\coloneqq T_1|_{\ell^p}$ is a bounded operator on $\ell^p$.
	
	In order to prove the claimed properties (a)--(c) of $T$, we first compute all powers of $T_1$ applied to the canonical unit vectors $e_j$. 
	Of course we have $T_1e_0 = e_0$ and hence $T_1^ne_0 = e_0$ for all $n \in \N_0$. For $j\in\N$ and $n\in\N_0$ we have
	\begin{align}
		\label{eqn:Tnej}
		(T_1^ne_j)|_\bbN = \frac{j+n}{j} \cdot e_{j+n}.
	\end{align}
	Now we can prove (a)--(c):
	
	(a) Since $T_1$ is positive, so is $T$. To prove that $T$ is power-bounded it is, due to the Uniform Boundedness Principle,  
	sufficient to prove that $(T^nf)_{n \in \bbN_0}$ is bounded in $\ell^p$ for each $0 \le f \in \ell^p$. So let $f$ be such a vector; 
	for every $n \ge 1$ we have $0 \le (T^nf)(0) = (T_1^nf)(0) = (T_1T_1^{n-1}f)(0) \le \langle \psi_1, T_1^{n-1}f\rangle = \langle \psi_1, f \rangle$. 
	Therefore, it suffices to show that the sequence $((T^nf)|_{\bbN})_{n \in \bbN_0}$ is bounded in $\ell^p$. This follows from the computation
	\begin{align*}
		\norm{(T^nf)|_{\bbN}}_p^p & = \norm{(T^n (f|_{\bbN}))|_\bbN}_p^p = \norm[\Bigg]{\biggl(T^n \sum_{j=1}^\infty f(j)e_j\biggr)|_\bbN}_p^p 
		= \norm[\Bigg]{\sum_{j=1}^\infty \frac{j+n}{j}f(j)e_{j+n}}_p^p  \\
		& = \sum_{j=1}^\infty \frac{(j+n)^p}{j^p} f(j)^p \frac{1}{(j+n)^p} = \norm{f|_{\bbN}}_p^p.
	\end{align*}
	This completes the proof of (a).
	
	(c) Note that the above computation also shows that $\norm{(T^nf)|_\bbN}_p = \norm{f|_\bbN}_p$ for each $n \in \N_0$ and each $0 \le f \in \ell^p$. 
	However, as we have seen in \eqref{eqn:Tnej}, $T^ne_1$ converges pointwise to $0$ and can therefore not be convergent in $\ell^p$.
	
	(b) Since we have $T_1'\psi_1 = \psi_1$, it follows that $T\psi = \psi$. To prove that $T$ has a non-zero lower bound with respect to $\psi$, we first 
	show that $T_1^n$  converges strongly as $n \to \infty$. Let $f \in \ell^1_+$. Then
	\begin{align*}
		\norm{(T_1^nf)|_{\bbN}}_1 & = \norm[\Bigg]{\biggl(T^n\sum_{j=1}^\infty f(j)e_j\biggr)|_\bbN}_1 = \norm[\Bigg]{\sum_{j=1}^\infty \frac{j+n}{j}f(j)e_{j+n}}_1  \\
		& = \sum_{j=1}^\infty \frac{j+n}{j} f(j) \frac{1}{(j+n)^p} = \sum_{j=1}^\infty \frac{j^{p-1}}{(j+n)^{p-1}} f(j) \frac{1}{j^p} \to 0 
	\end{align*}
	as $n \to \infty$; the convergence follows from the dominated convergence theorem since $p > 1$ and $\sum_{j=1}^\infty f(j) \frac{1}{j^p} < \infty$. 
	Moreover, 
	\begin{align*}
		(T_1^nf)(0) = (T_1T_1^{n-1}f)(0) & = \langle \psi_1 - \alpha_1, T_1^{n-1}f\rangle \\
		& = \langle \psi_1, f\rangle - \langle \alpha_1, (T_1^{n-1}f)|_{\bbN} \rangle \to \langle \psi_1, f \rangle \quad \text{as } n \to \infty.
	\end{align*}
	Hence $T_1^nf \to \langle \psi_1, f \rangle e_0$ and by linearity it follows that $(T_1^n)$ converges strongly to $\psi_1\otimes e_0$.
	This implies that $e_0$ is a lower bound for $(T_1^n)_{n \in \bbN}$ and thus it is also a lower bound for $(T^n)_{n \in \bbN}$ with respect to $\psi$.
\end{example}

Recall that a semigroup $\cT = (T_t)_{t \in J}$ on a Banach space $E$ is said to \emph{have relatively compact orbits} if for each $f\in E$ the set $\{T_tf: t \in J\}$ is 
relatively compact with respect to the norm on $E$. This condition is frequently satisfied in applications. 
For example, if $\cT = (T_t)_{t \in [0,\infty)}$ is a bounded $C_0$-semigroup whose generator has compact resolvent, then $\cT$ has relatively compact orbits \cite[Cor V.2.15]{nagel2000}.

\begin{proposition} \label{prop:no-fixed-point-but-compact-orbits}
	Let $E$ be a Banach lattice with order continuous norm and let $\cT = (T_t)_{t \in J}$ be a positive semigroup on a $E$ where either $J = \bbN$ or $J = (0,\infty)$. 
	Suppose that there exists a strictly positive functional $\psi \in E'_+$ such that $T'_t\psi \le M \psi$ for a constant $M>0$ and all $t \in J$. 
	
	Assume that $\cT$ has relatively compact orbits and suppose that for every $f \in E_+$ with $\langle \psi, f \rangle = 1$ 
	there exists a non-zero lower bound $h_f$ for $(\cT,f)$ with respect to $\psi$  
	such that $\inf_{f} \langle \psi, h_f \rangle \ge 1$. Then $\cT$ is strongly convergent.
\end{proposition}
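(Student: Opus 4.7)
The plan is to lift the problem to an AL-space where Corollary~\ref{cor:ind-lower-bounds-with-norm-bounds-for-bounded-sg} applies, and then use the relative compactness of orbits to transfer the resulting convergence back to $E$. This mirrors the strategy of Theorem~\ref{thm:ind-lower-bound-wrt-psi}, with relatively compact orbits replacing the role of the quasi-interior point $f_0$ in the final step.

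First I would equip $E$ with the lattice norm $\norm{f}_\psi \coloneqq \langle \psi, \abs{f}\rangle$, which is additive on $E_+$ and satisfies $\norm{T_tf}_\psi \le M\norm{f}_\psi$ by the hypothesis $T_t'\psi \le M\psi$. Let $F$ denote the Banach lattice completion of $(E,\norm{\argument}_\psi)$; then $F$ is an AL-space, the inclusion $E \hookrightarrow F$ is continuous because $\norm{f}_\psi \le \norm{\psi}\cdot\norm{f}$ on $E$, and the order continuity of the norm on $E$ makes $E$ an ideal in $F$ (as in the proof of Theorem~\ref{thm:ind-lower-bound-wrt-psi}, via \cite[Lem~IV.9.3]{schaefer1974}). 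Let $\cS = (S_t)_{t \in J}$ denote the extension of $\cT$ to a bounded positive semigroup on $F$.

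Next I would reproduce the construction from the proof of Theorem~\ref{thm:ind-lower-bound-wrt-psi} to verify that for every normalised $f \in F_+$ there exists a lower bound for $(\cS,f)$ of $\norm{\argument}_\psi$-norm at least $1$: one approximates $f$ from below by the upwards directed net $(g)_{g \in A}$ with $A = \{g \in E_+ : g \le f\}$, invokes the hypothesised lower bounds $h_g$ for $(\cT,g)$ satisfying $\langle\psi,h_g\rangle \ge \norm{g}_\psi$, and takes the supremum in the KB-space $F$ of the resulting increasing, norm-bounded net. Crucially this step of Theorem~\ref{thm:ind-lower-bound-wrt-psi} does not use $f_0$ at all. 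An application of Corollary~\ref{cor:ind-lower-bounds-with-norm-bounds-for-bounded-sg} (with $\varepsilon=1$) to $\cS$ then yields strong convergence of $\cS$ on $F$ to some operator $Q \in \calL(F)$.

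The main obstacle is the final step: upgrading this $\norm{\argument}_\psi$-convergence on $F$ to $\norm{\argument}$-convergence on $E$, which is precisely where the assumption of relatively compact orbits is used. Fix $f \in E$ and assume for contradiction that $(T_tf)_{t \in J}$ does not converge to $Qf$ in $(E, \norm{\argument})$; then there exist $\varepsilon > 0$ and $t_n \to \infty$ such that $\norm{T_{t_n}f - Qf}_E \ge \varepsilon$ for all $n$, interpreting $Qf \in F$ a priori. By relative compactness of the orbit, some subsequence $(T_{t_{n_k}}f)$ converges in $(E,\norm{\argument})$ to an element $h \in E$; continuity of the inclusion $E \hookrightarrow F$ then gives $T_{t_{n_k}}f \to h$ in $F$, while $T_{t_{n_k}}f \to Qf$ in $F$ by the previous paragraph. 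Uniqueness of limits in $F$ forces $h = Qf$; in particular $Qf \in E$ and $\norm{T_{t_{n_k}}f - Qf}_E \to 0$, contradicting the lower bound $\varepsilon$. Hence $T_tf \to Qf$ in $(E,\norm{\argument})$ for every $f \in E$, and $\cT$ is strongly convergent.
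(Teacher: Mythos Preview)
Your proof is correct and follows essentially the same approach as the paper's: pass to the AL-completion $F$, extend the individual lower bounds to all of $F_+$ via the ideal property (the part of Theorem~\ref{thm:ind-lower-bound-wrt-psi} that does not use $f_0$), apply Corollary~\ref{cor:ind-lower-bounds-with-norm-bounds-for-bounded-sg}, and then use relative compactness of orbits together with continuity of the inclusion $E\hookrightarrow F$ to identify the limit in $E$. The only cosmetic issue is that in your contradiction argument you write $\norm{T_{t_n}f - Qf}_E \ge \varepsilon$ before you know $Qf\in E$; the paper sidesteps this by arguing directly that every sequence $t_n\to\infty$ has a subsequence along which $T_{t_{n_k}}f$ converges in $E$, and that all such limits coincide with $Pf$ in $F$---this simultaneously shows $Pf\in E$ and gives convergence, which is a slightly cleaner way to phrase the same idea.
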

\begin{proof}
	We repeat the first steps of the proof of Theorem \ref{thm:ind-lower-bound-wrt-psi}: The semigroup $\cT$ is bounded with respect to the norm
	$\norm{f}_\psi \coloneqq \applied{\psi}{\abs{f}}$, which is additive on the positive cone $E_+$. 
	Denote by $F$ the completion of $(E,\norm{\argument}_\psi)$ and by $\cS = (S_t)_{t\in J}$ the extension of $\cT$ to $F$. 
	Using that $E$ has order continuous norm, one can show as in the proof of Theorem~\ref{thm:ind-lower-bound-wrt-psi} that the assumptions 
	of Corollary~\ref{cor:ind-lower-bounds-with-norm-bounds-for-bounded-sg}(ii) are fulfilled and thus, $(S_t)$ converges strongly to an operator $P\in \cL(F)$ as $t \to \infty$.

	Now pick $0<f\in E$ and consider an arbitrary sequence of times $(t_n) \subseteq J$ which converges to $\infty$. Since the orbit $\{T_t f: t \in J\}$ is relatively compact in $E$, 
	we find a subsequence $(t_{n_k})$ of $(t_n)$ such that $(T_{t_{n_k}}f)$ converges to a point $g \in E$ with respect to the norm in $E$. 
	In particular, $(T_{t_{n_k}}f)$ converges to $g$ with respect to $\norm{\argument}_\psi$ and thus, we have $g = Pf$. This shows that $Pf \in E$ and that 
	every subsequence of $(T_tf)_{t \in J}$ has a subsequence which converges to $Pf$. Hence, $\lim T_tf = Pf$ in $E$.
\end{proof}

In the above proposition, order continuity of the norm was only needed to show that the extended semigroup $\cS$ also admits individual lower bounds which are bounded below in norm. If we have a single lower bound for the entire semigroup, this argument is much simpler and thus, we do not need order continuity of the norm:

\begin{proposition} \label{prop:no-fixed-point-but-compact-orbits-without-os-norm}
	Let $E$ be a Banach lattice and let $\cT = (T_t)_{t \in J}$ be a positive semigroup on $E$ where either $J = \bbN$ or $J = (0,\infty)$. 
	Assume that there exists a strictly positive functional $\psi \in E'_+$ such that $T'_t\psi \le M \psi$ for a constant $M>0$ and all $t \in J$. 
	If $\cT$ has relatively compact orbits and a non-zero lower bound with respect to $\psi$, then $\cT$ converges strongly to a rank-$1$ projection.
\end{proposition}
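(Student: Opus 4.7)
The plan is to mirror the proof of Proposition~\ref{prop:no-fixed-point-but-compact-orbits} but, thanks to the existence of a single (uniform) non-zero lower bound for $\cT$ with respect to $\psi$, to invoke Corollary~\ref{cor:lasota-yorke-for-bounded-semigroups} in place of Corollary~\ref{cor:ind-lower-bounds-with-norm-bounds-for-bounded-sg}. This removes the need to extend individual lower bounds, which was the step that required order continuity of the norm in Proposition~\ref{prop:no-fixed-point-but-compact-orbits}.

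First I would introduce the lattice norm $\norm{f}_\psi \coloneqq \langle \psi, \abs{f}\rangle$ on $E$, noting that it is additive on $E_+$ and that $\norm{f}_\psi \le \norm{\psi}\,\norm{f}$, while strict positivity of $\psi$ guarantees that $\norm{\argument}_\psi$ is actually a norm. The hypothesis $T_t'\psi \le M\psi$ gives $\norm{T_tf}_\psi \le M\norm{f}_\psi$ for all $f\in E$ and $t\in J$. Let $F$ denote the completion of $(E,\norm{\argument}_\psi)$, which is an AL-space, and let $\cS = (S_t)_{t\in J}$ be the continuous extension of $\cT$ to $F$; this is a positive semigroup on $F$, bounded by $M$.

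Next I would promote the given non-zero lower bound $h \in E_+$ of $\cT$ with respect to $\psi$ to a non-zero lower bound of $\cS$ in the AL-space sense (Definition~\ref{def:lower-bound-norm}). For $f\in E_+$ with $\norm{f}_\psi = 1$ this is exactly the hypothesis. For a general $f\in F_+$ with $\norm{f}_\psi = 1$ one approximates $f$ by positive, $\norm{\argument}_\psi$-normalised vectors $f_n \in E_+$ and uses the estimate
\begin{align*}
\norm{(S_t f - h)^-}_\psi \le \norm{(S_t f_n - h)^-}_\psi + M\norm{f-f_n}_\psi
\end{align*}
together with the uniform bound $M$ on $\cS$ to conclude. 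Corollary~\ref{cor:lasota-yorke-for-bounded-semigroups} then applies on the AL-space $F$ and yields that $\cS$ converges strongly on $F$ to a rank-$1$ projection $P = \phi \otimes f_0$ with $f_0\in F_+$ of $\norm{\argument}_\psi$-norm one.

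Finally I would pull this limit back to $E$ using relative compactness of the orbits. Fix $f\in E$ and a sequence $t_n\to\infty$ in $J$. By relative compactness, some subsequence $(T_{t_{n_k}}f)$ converges in $(E,\norm{\argument})$ to a vector $g\in E$. Since the embedding $(E,\norm{\argument}) \hookrightarrow (F,\norm{\argument}_\psi)$ is continuous, the same subsequence converges to $g$ in $F$, and therefore $g = Pf$. Consequently $Pf\in E$, and since every subsequence of $(T_{t_n}f)_{n}$ has a further $\norm{\argument}$-convergent subsequence with limit $Pf$, the whole net $(T_tf)_{t\in J}$ converges to $Pf$ in $E$. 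In particular, applying this to any $f\in E$ with $\phi(f)\neq 0$ shows $f_0\in E$, so the strong limit on $E$ is the rank-$1$ projection $\phi|_E \otimes f_0$.

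The only non-routine step is verifying that $h$ remains a lower bound on all of $F$ rather than merely on the dense subspace $E$; this is the main place where the uniform bound $M$ is used, but beyond that the argument is a direct combination of Corollary~\ref{cor:lasota-yorke-for-bounded-semigroups} with a compactness-plus-density pullback, and no order-continuity hypothesis enters.
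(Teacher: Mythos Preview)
Your proposal is correct and follows essentially the same route as the paper: introduce $\norm{\argument}_\psi$, complete to an AL-space $F$, extend $\cT$ to $\cS$, observe that the single lower bound $h$ for $\cT$ with respect to $\psi$ is automatically a lower bound for $\cS$ (via the density/approximation estimate you wrote down), apply Corollary~\ref{cor:lasota-yorke-for-bounded-semigroups}, and then use relative compactness of orbits to pull the limit back to $E$. The paper's proof is just a one-sentence remark pointing to exactly this argument; your additional explicit verification that $f_0\in E$ (so that the limit is rank-$1$ on $E$) is a detail the paper leaves implicit but which is indeed needed.
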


The proof is almost the same as for Proposition~\ref{prop:no-fixed-point-but-compact-orbits}, except for the fact that 
one refers to Corollary~\ref{cor:lasota-yorke-for-bounded-semigroups} instead of Corollary~\ref{cor:ind-lower-bounds-with-norm-bounds-for-bounded-sg}.
It is easy to see that the non-zero lower bound for the original semigroup is also a lower bound for the extended one and thus,
one does not need order continuity of the norm here.

\subsection*{Acknowledgements} The second author is indebted to Marta Tyran-Kami\'{n}ska and Ryszard Rudnicki for a very interesting discussion at the workshop ``Evolution equations: theory and applications'' in spring 2015 in Besan\c{c}on, as well as for bringing Ding's paper \cite{ding2003} to his attention. He was also supported by a scholarship within the scope of the LGFG Baden-W\"urttemberg (grant number 1301 LGFG-E), Germany.

\bibliographystyle{abbrv}
\bibliography{literature}

\end{document}